\theoremstyle{plain}
\newtheorem{theorem}{Theorem}[section]
\newtheorem{corollary}[theorem]{Corollary}
\newtheorem{proposition}[theorem]{Proposition}
\newtheorem{lemma}[theorem]{Lemma}
\theoremstyle{definition}
\newtheorem{definition}[theorem]{Definition}
\newtheorem{assumption}[theorem]{Assumption}
\theoremstyle{remark}
\newtheorem{remark}[theorem]{Remark} 
\newtheorem{example}[theorem]{Example}
\numberwithin{equation}{section}
\numberwithin{figure}{section}
\numberwithin{table}{section}
\newcommand{\Om}{{\bf \Omega}}
\newcommand{\R}{\mathbb{R}}
\newcommand{\N}{\mathbb{N}}
\newcommand{\C}{\mathbb{C}} 
\newcommand{\Q}{\mathbb{Q}}
\newcommand{\Z}{\mathbb{Z}}
\newcommand{\T}{\mathbb{T}}
\newcommand{\s}[1]{\CMcal{#1}}
                  \newcommand{\I}{\mathfrak I}
\newcommand{\W}{\mathcal{W}}
\newcommand{\bb}[1]{\mathscr{#1}}
\newcommand{\rr}[1]{\mathfrak{#1}}
\newcommand{\n}[1]{\mathbb{#1}}
                        \newcommand{\h}{\mathfrak{h}}
\newcommand{\Orb}{\mathtt{Orb}}
\newcommand{\expo}[1]{\,\mathrm{e}^{#1}\,}                 
\newcommand{\dd}{\,\mathrm{d}}
\newcommand{ \ii}{\,\mathrm{i}\,}
\newcommand{\virg}[1]{\lq\lq#1\rq\rq}                \newcommand{\ie}{\textsl{i.\,e.\,}}
\newcommand{\eg}{\textsl{e.\,g.\,}}
\newcommand{\cf}{\textsl{cf}.\,}
\newcommand{\A}{\mathfrak{A}}
\begin{document}

\title[On the K-theory of magnetic algebras: Iwatsuka case ]{On the K-theory of magnetic algebras: Iwatsuka case }

\author[G. De~Nittis]{Giuseppe De Nittis}

\address[G. De~Nittis]{Facultad de Matem\'aticas \& Instituto de F\'{\i}sica,
  Pontificia Universidad Cat\'olica de Chile,
  Santiago, Chile.}
\email{gidenittis@uc.cl}
\author[J. Gomez]{Jaime Gomez}

\address[J. Gomez]{Mathematisch Instituut, University of Leiden, Leiden, Netherlands}
\email{j.a.gomez.ortiz@math.leidenuniv.nl}

\author[D. Polo]{Danilo polo Ojito}

\address[D. Polo]{Department of Physics, Yeshiva University, New York, NY 10016, USA}
\email{danilo.poloojito@yu.edu}

\vspace{2mm}

\date{\today}

\begin{abstract}
In the tight-binding approximation, an Iwatsuka magnetic field is modeled by a function on $\Z^2$ with constant, but distinct values in the two parts of the lattice separated by a straight line of slope $\alpha\in [-\infty,\infty]$.  In this paper,  the $K$-theory of the magnetic $C^*$-algebras generated by an Iwatsuka magnetic field for any possible $\alpha$ is computed. One interesting aspect concerns the analysis of the behavior of the system in the transition from rational to irrational $\alpha$.
It turns out that when $\alpha$ is irrational, the magnetic hull associated with the flux operator forms a Cantor set. On the other hand,  for rational $\alpha$  this set coincides with the two-point compactification of $\Z$. This characterization, along with the use of the Pimsner-Voiculescu exact sequence, is the main ingredient for the computation of the $K$-theory. Once  the $K$-theory is known, with the use of the index theory one can deduce   the bulk-interface correspondence for tight-binding Hamiltonians  subjected to an Iwatsuka magnetic field. Notably, it occurs that the topological quantization of the interface currents remains independent of the slope  $\alpha$.
\medskip

\noindent
{\bf MSC 2010}:
Primary: 81R60		
;
Secondary: 	46L80, 81R60, 19K56.\\
\noindent
{\bf Keywords}:
{\it Magnetic algebras, Iwatsuka magnetic field, bulk-interface correspondence, $K$-theory.}

\end{abstract}

\maketitle

\tableofcontents
\section{Introduction}
In the tight-binding approximation, a generalized Iwatsuka magnetic field on the lattice $\Z^2$ is a function $B_\alpha\colon \Z^2\to \R$ which takes constant values $b_\pm$ in the two regions separated by the \emph{interface} described by the straight line $y=\alpha x$ of \emph{slope} $\alpha$. The standard Iwatsuka model is recovered when $\alpha=\pm \infty,$ meaning the dividing line is $x=0$ \cite{Iwa}.  In this paper, we investigate the topological property of the Iwatsuka model for any possible $\alpha\in\overline{\R}:=[-\infty,+\infty]$
with a special interest in the analysis of the behavior of the system in the transition from rational to irrational $\alpha$. 

\medskip

The system of interest will be described in terms of the 
$C^*$-algebra  $\A_\alpha\subset\mathcal{B}(\ell^2(\Z^2))$ generated by magnetic translations $\mathfrak{s}_1$ and $\mathfrak{s}_2$ associated to  the Iwatsuka magnetic field and subjected to the following commutation relation
$$
\big(\mathfrak{s}_1\mathfrak{s}_2\mathfrak{s}_1^*\mathfrak{s}_2^*\psi\big)(n)\;=\;\expo{\ii B_\alpha(n)}\psi(n),\qquad \forall\,\psi\in \ell^2(\Z^2)\;.
$$
We will refer to $\A_\alpha$ as the \emph{Iwatsuka algebra} for short.
In order to compute the $K$-groups of   $\A_\alpha$ the first step  consists is  noticing that the algebra can be decomposed in terms of crossed products, \ie
$$\A_\alpha\;\simeq \;\big(C(\Omega_\alpha)\rtimes \Z\big)\rtimes \Z
$$
where $\Omega_\alpha$, the so called \emph{magnetic hull} \cite{Deni1, Dani}, is a suitable compact Hausdorff space equipped with a $\Z^2$-action $\sigma^*$.   Our first main result provides a precise description of the topology of $\Omega_\alpha$ and all its invariant measures.
\begin{theorem}\label{th_001}
For any $\alpha \in \overline{\R}$ the dynamical system $(\Omega_\alpha,\sigma^*,\Z^2)$ has only three invariant measures $\big\{ \mu_\alpha, \mathbb{P}_+,\mathbb{P}_-\big\}$, up to  a scaling factor. Moreover,
\begin{enumerate}[i.]
\item  $\Omega_\alpha$ is homeomorphic to the two-point compactification of $\Z$ for $\alpha\in \mathbb{Q}\cup\{\pm \infty\}.$
    \item $\Omega_\alpha$ is a Cantor set for $\alpha$ irrational.
\end{enumerate}
\end{theorem}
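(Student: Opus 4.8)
The plan is to realise $\Omega_\alpha$ concretely as the orbit closure, inside the Cantor cube $\{b_+,b_-\}^{\Z^2}\cong\{0,1\}^{\Z^2}$ with its product topology, of the reference configuration $\xi_\alpha$ encoding the half-plane cut out by the interface (for $\alpha$ finite, the indicator of $\{(a,b)\in\Z^2:b>\alpha a\}$; for $\alpha=\pm\infty$, the indicator of $\{a>0\}$), with $\sigma^*$ the shift action. The first step is a bookkeeping lemma: translating $\xi_\alpha$ by $(m,n)$ yields the indicator $\phi(c)$ of the half-plane $\{(a,b):b-\alpha a>c\}$ with offset $c=n-\alpha m$, so the orbit is exactly $\{\phi(c):c\in G_\alpha\}$ with $G_\alpha:=\Z+\alpha\Z$, and $\sigma^*$ acts on the offset by the translation $c\mapsto c+(n-\alpha m)$. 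This transports the whole problem to the \emph{interface coordinate} $c$, where everything is governed by a single dichotomy: $G_\alpha$ is discrete (equal to $\tfrac1q\Z$ when $\alpha=p/q$, and to $\Z$ when $\alpha=\pm\infty$) versus dense in $\R$ (when $\alpha$ is irrational).

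For the topology I would compute the closure coordinate-wise. Letting $c\to\pm\infty$ along $G_\alpha$ shows the empty and full configurations $\mathbf 0,\mathbf 1$ lie in $\Omega_\alpha$; since these are the only constant, hence the only $\sigma^*$-fixed, configurations, they are the two distinguished ``boundary'' points. In the rational and vertical cases $G_\alpha$ is discrete, so a two-coordinate cylinder $\{\omega(p)=0,\ \omega(p')=1\}$ for suitable $p,p'\in\Z^2$ isolates each $\phi(c)$ inside $\Omega_\alpha$; hence the orbit is a discrete bi-infinite chain order-isomorphic to $\Z$ with exactly the two accumulation points $\mathbf 0,\mathbf 1$, proving that $\Omega_\alpha$ is the two-point compactification of $\Z$, i.e. part (i). In the irrational case each $g\in G_\alpha$ is realised by a \emph{unique} lattice point on the corresponding line, so the left and right limits of $\phi$ at each $c_0\in G_\alpha$ differ in exactly one coordinate: the line $\R$ has each of its dense, countable set of $G_\alpha$-points doubled. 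Here I would verify the four Brouwer criteria---$\Omega_\alpha$ is nonempty, compact, and metrizable as a closed subset of $\{0,1\}^{\Z^2}$, totally disconnected for the same reason, and, crucially, \emph{perfect}, because density of $G_\alpha$ produces, for any finite window $F\subset\Z^2$ and any point of $\Omega_\alpha$, a distinct orbit point agreeing on $F$. Brouwer's characterization then identifies $\Omega_\alpha$ as a Cantor set, giving part (ii).

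For the invariant measures I would again descend to the interface coordinate. The two fixed points carry the atomic invariant measures $\mathbb P_+,\mathbb P_-$. On the complementary bulk the action is free and conjugate to the translation action of $G_\alpha$ on $\R$, so it suffices to classify $G_\alpha$-invariant Radon measures on $\R$: in the discrete case these are the multiples of counting measure on $\tfrac1q\Z$, while in the dense case a short continuity argument---for compactly supported $f$ the map $t\mapsto\int f(x-t)\dd\nu(x)$ is continuous and constant on the dense set $G_\alpha$, hence constant---forces $\nu$ to be invariant under all of $\R$, i.e. a multiple of Lebesgue measure. Either way a single non-atomic extremal survives, yielding $\mu_\alpha$, and the doubling is invisible to it since it charges no point. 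An ergodic-decomposition argument, using that the only minimal sets are the two fixed points while the bulk measure is itself ergodic, then shows that $\{\mu_\alpha,\mathbb P_+,\mathbb P_-\}$ exhausts the extreme rays of the invariant cone.

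I expect the main obstacle to be the irrational case: pinning down the doubling phenomenon and, through it, simultaneously establishing perfectness (for the Cantor conclusion) and the ergodicity and uniqueness of $\mu_\alpha$. All three rest on the equidistribution consequences of $\Z+\alpha\Z$ being dense---the very feature that fails for rational $\alpha$ and collapses the hull to the two-point compactification.
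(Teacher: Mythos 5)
Your proposal is correct and follows essentially the same route as the paper: realizing $\Omega_\alpha$ as the orbit closure of the half-plane configuration in the full shift, reducing everything to the offset subgroup $\Z+\alpha\Z\subset\R$ (the paper's $\n{X}_\alpha$ and the factor map $\pi\colon\Omega_\alpha\to\overline{\R}$), using the discrete-versus-dense dichotomy to get the two-point compactification in the rational case and perfectness (hence a Cantor set) in the irrational case, and classifying invariant measures by pushing forward to $\R$ and invoking uniqueness of Haar measure. Your ``doubled line'' description of the irrational hull is exactly the paper's splitting into $\{\omega_x\}_{x\in\R}$ and $\mathtt{Orb}(\omega_\bullet)$, so there is nothing genuinely different to compare.
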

Sections \ref{sec: hull} and \ref{sec:measures} are devoted to the proof of this result, where an explicit description of the elements of the magnetic hull and the measures is provided. More precisely the arguments for the proof of Theorem \ref{th_001} are contained in Propositions \ref{teo_disc_rat}, \ref{teo cantor}, \ref{Teo. measures} and \ref{prop: invariant measure}.

\medskip

The main implication of Theorem \ref{th_001} is the existence of a set of three invariant traces $\{\mathcal{T}_\alpha,\mathcal{T}_+,\mathcal{T}_-\}$ on $\A_\alpha$   obtained by the composition of the invariant measures with the conditional expectation $E\colon \A_\alpha\to C(\Omega_\alpha)$ (\cf  \cite[Chapter VIII]{Dav}). From the physical side, $\mathcal{T}_\alpha$ is an infinite trace that provides the expectation value of extended observables supported near the interface. The other two traces $\mathcal{T}_\pm$ are finite traces per-unit-volume related with observables supported away from the interface in the two opposite directions.

\medskip

The main ingredient in describing the topology of $\Omega_\alpha$ is the observation that this space is $\Z^2$-homeomorphic to some \emph{subshift space} of  $\Om:=\{b_+,b_-\}^{\Z^2}$. For $\alpha$ irrational it holds true that  $\Omega_\alpha$  is a closed subset of $\Om$ without isolated points. On the other hand, for $\alpha\in \mathbb{Q}\cup\{\pm \infty\}$ the space $\Omega_\alpha$ is a countable set with two accumulation points. It is important to highlight that the characterization of the irrational case is the truly new result	of this work since the rational can be inferred (with a little effort) from the previous work \cite{Deni1}. With this information in mind, the Pimsner-Voicolescu exact sequence \cite{Pim} allows us to compute the $K$-theory $\A_\alpha$. This is  the second main result of this work:
\begin{theorem}\label{Teo: 1.2}
For every $\alpha\in\overline{\R}$ the following holds
   \begin{equation*}
K_0(\A_\alpha)\;=\;\Z^3\;,\qquad K_1(\A_\alpha)\;=\;\begin{cases} \Z^3& \text{if}\; \alpha\in \mathbb{Q}\cup\{\pm\infty\}\\\Z^2&\text{\rm otherwise\;.}
\end{cases}
   \end{equation*}
\end{theorem}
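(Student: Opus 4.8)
The plan is to run the Pimsner--Voiculescu six-term exact sequence \cite{Pim} twice along the iterated decomposition $\A_\alpha\simeq\big(C(\Omega_\alpha)\rtimes_{\sigma_1}\Z\big)\rtimes_{\sigma_2}\Z$, using Theorem \ref{th_001} to supply the input. The starting point is $K_*(C(\Omega_\alpha))$: by Theorem \ref{th_001} the space $\Omega_\alpha$ is totally disconnected in every case (a Cantor set, or the two-point compactification of $\Z$), so $C(\Omega_\alpha)$ is a commutative $AF$-algebra, whence $K_1(C(\Omega_\alpha))=0$ and $K_0(C(\Omega_\alpha))\cong C(\Omega_\alpha,\Z)$, the group of locally constant $\Z$-valued functions. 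I would then make this group explicit as a module over the two translations. Writing $\Omega_\alpha$ as the disconnection of $[-\infty,+\infty]$ at the countable orbit $D$ of interface positions (the set where the defining line meets the lattice), a function is determined by its germ $c_{-\infty}$ at one end together with finitely many integer jumps indexed by $D$; the two ends are fixed by $\sigma_1,\sigma_2$, and the translations permute $D$ by coordinate shifts. Concretely, $D\cong\Z$ with a single shift in the rational case (after choosing a basis $\{w,v\}$ of $\Z^2$ with $v$ spanning the interface direction, so that $\sigma_v$ acts trivially and $\sigma_w$ is the unit shift of $\Z\cup\{\pm\infty\}$), while $D\cong\Z^2$ in the irrational case, with $\sigma_1,\sigma_2$ acting as the two coordinate shifts $t,s$ on $\Z[s^{\pm1},t^{\pm1}]\cong\bigoplus_D\Z$.

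Feeding this into the first PV sequence for $B:=C(\Omega_\alpha)\rtimes_{\sigma_1}\Z$, the vanishing of $K_1(C(\Omega_\alpha))$ collapses it to $K_0(B)=\operatorname{coker}(1-\sigma_{1*})$ and $K_1(B)=\ker(1-\sigma_{1*})$ on $C(\Omega_\alpha,\Z)$. The jump subgroup and the germ quotient fit into a $\sigma_1$-equivariant short exact sequence, and the associated six-term (snake) sequence is easy to evaluate: the invariant elements are exactly the constants, so $K_1(B)=\Z$; and since the germ lifts to the fixed constant function, the connecting map vanishes and $K_0(B)$ splits off the germ contribution $\Z$. In the rational case the single shift on $\Z[u^{\pm1}]$ gives $\operatorname{coker}(1-u)=\Z$, hence $K_0(B)=\Z^2$, while in the irrational case $\operatorname{coker}(1-t)$ on $\Z[s^{\pm1},t^{\pm1}]$ equals $\Z[s^{\pm1}]$, hence $K_0(B)=\Z\oplus\Z[s^{\pm1}]$.

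For the second crossed product I must address that it is twisted by the magnetic field: $\sigma_2$ lifts to an automorphism $\tilde\sigma_2$ of $B$ that restricts to the genuine translation on $C(\Omega_\alpha)$ but multiplies the implementing unitary $u_1$ by the phase $\expo{\ii B_\alpha}$. Since $K_0(B)$ is the image of $K_0(C(\Omega_\alpha))$ under $\iota_*$, the induced map $\tilde\sigma_{2*}$ on $K_0(B)$ agrees with the base action; and on $K_1(B)=\Z\langle[u_1]\rangle$ the phase correction $[\expo{\ii B_\alpha}]$ lives in the image of $K_1(C(\Omega_\alpha))=0$, so $\tilde\sigma_{2*}[u_1]=[u_1]$. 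Thus the cocycle is invisible to $K$-theory: $\sigma_{2*}=\operatorname{id}$ throughout in the rational case, whereas in the irrational case $\sigma_{2*}=\operatorname{id}\oplus(\text{mult.\ by }s)$ on $K_0(B)=\Z\oplus\Z[s^{\pm1}]$ and $\operatorname{id}$ on $K_1(B)$. Running the second PV sequence then yields short exact sequences $0\to\operatorname{coker}(1-\sigma_{2*}|_{K_0(B)})\to K_0(\A_\alpha)\to\ker(1-\sigma_{2*}|_{K_1(B)})\to0$ and the analogue for $K_1$. All quotients are finitely generated free, so the extensions split, and using $\operatorname{coker}(1-s)=\ker(1-s)=\Z$ resp.\ $0$ on $\Z[s^{\pm1}]$ one reads off $K_0(\A_\alpha)=\Z^3$ in both cases, $K_1(\A_\alpha)=\Z^3$ when $\sigma_{2*}=\operatorname{id}$ (rational), and $K_1(\A_\alpha)=\Z^2$ otherwise; the rank $3$ of $K_0$ is consistent with the three invariant traces $\{\mathcal{T}_\alpha,\mathcal{T}_+,\mathcal{T}_-\}$ which pair nontrivially with the two germs and the interface class.

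The main obstacle is Step two, the explicit identification of $C(\Omega_\alpha,\Z)$ as a $\Z^2$-module, which is exactly where the distinction between the two cases enters: one must translate the topological description of $\Omega_\alpha$ from Theorem \ref{th_001} into the statement that the cut set $D$ is a \emph{discrete} single orbit ($\cong\Z$) for rational $\alpha$ but a \emph{dense} two-parameter orbit ($\cong\Z^2$) for irrational $\alpha$, and that the translations act there as honest coordinate shifts (so that the relevant cokernels are $\Z[s^{\pm1}]$ rather than a group ring of a finite cyclic group). The only other delicate point is the choice of the interface-adapted basis in the rational case and the verification that $\sigma_{2*}$ reduces to the base action despite the magnetic twist, both of which hinge on the vanishing of $K_1(C(\Omega_\alpha))$.
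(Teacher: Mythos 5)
Your proposal is correct and follows essentially the same route as the paper: an iterated Pimsner--Voiculescu computation whose input is the identification of $K_0(C(\Omega_\alpha))=C(\Omega_\alpha,\Z)$ as a $\Z^2$-module (germ plus jumps indexed by the cut set $\n{X}_\alpha$, which is $\Z$ with a unit shift for rational $\alpha$ and $\Z^2$ with coordinate shifts for irrational $\alpha$), together with the observation that the magnetic twist acts trivially on $K$-theory because $K_1$ of the AF base vanishes. The only cosmetic difference is that for rational $\alpha$ the paper rescales the lattice and invokes \cite[Theorem 4.10]{Deni1} rather than rerunning the two PV sequences in the interface-adapted basis, and it additionally exhibits explicit generators, which the statement does not require.
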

The proof of this Theorem is given in Propositions \ref{teo: k-groups rational} and \ref{k-groups A}, where the generators of these groups are also included. It should be mentioned that the $K$-groups for the case $\alpha=\pm \infty$ were already computed in \cite{Deni1}.

\medskip

The $K$-theory of the magnetic $C^*$-algebras, like the Iwatsuka $C^*$-algebra, has been a widely studied topic in recent years due to its applications to topological effect in transport phenomena like the integer quantum Hall effect (IQHE) \cite{PRO, Kel, Kel1} and the quantized edge currents generated by magnetic interfaces \cite{Dani, Deni1, Kot}. 
In this work, as a concrete application of Theorem \ref{Teo: 1.2}, we study the 
topological quantization of the edge currents along the magnetic barriers induced by $B_\alpha$.
It is known that in the standard Iwatsuka model  ($\alpha=\pm \infty$)  when $b_+\neq b_-$ may exist extended states localized near the interface carrying currents. Moreover,   such currents are quantized, similar to the case of the IQHE,  by an integer which is obtained as the difference of two Chern numbers associated with the constant magnetic fields $b_\pm$. It is worth observing that this is a purely magnetic phenomenon since the magnetic field is what generates the interface while the material remains the same (infinitely extended and without boundaries). From the mathematical point of view, there is quite an extensive literature where the localization, existence, and quantization of interface currents have been proven for the one-particle sector \cite{Dom, Kot,His,Deni1} (see also \cite{Dani, Dro1,Dro2,GV} for non-straight interfaces). By following the $K$-theoretic framework constructed in \cite{Deni1}, we prove the quantization of the interface currents for the generalized Iwatsuka magnetic field $B_\alpha$ for any $\alpha\in\overline{\R}$. We start by constructing an exact sequence of $C^*$-algebras
 
\begin{equation}\label{eq: 1}
    \xymatrix{
 0\ar[r]&\mathfrak{I}_{\alpha}\ar[r]^{} & \A_{\alpha} \ar[r]^{{\rm ev}}& \A_{{\rm bulk}}\ar[r]&0\;, }
\end{equation}
where $\mathfrak{I}_\alpha$ is the interface algebra describing the behavior near the interface, and the bulk algebra $\mathfrak{A}_{\rm bulk}:=\mathfrak{A}_{b_+}\oplus\mathfrak{A}_{b_-}$ contains the information of the asymptotic magnetic field. The full Hamiltonian of the system is a selfadjoint element $\hat{\h}\in \A_\alpha$, and the bulk Hamiltonian is described by $\h:=(\mathfrak{h}_+,\mathfrak{h}_-)={\rm ev}(\hat{\h})\in \mathfrak{A}_{\rm bulk}$. If we assume that there is a compact set $\Delta$ contained in a non-trivial spectral gap of the bulk Hamiltonian $\h$ (\emph{bulk gap assumption}),  then for any $\mu\in\Delta $ the \emph{Fermi projection} 
$$\mathfrak{p}_\mu\;:=\;\big(\mathfrak{p}_{\mu_+},\mathfrak{p}_{\mu_-}\big)\;=\;\big(\,\chi_{(-\infty,\mu]}(\mathfrak{h}_+)\,,\,\chi_{(-\infty,\mu]}(\mathfrak{h}_-)\,\big)\;\in\; \mathfrak{A}_{\rm bulk}$$
defines an element in $[\mathfrak{p}_\mu]\in K_0(\mathfrak{A}_{\rm bulk})$. The bulk magnetic invariants of the system can be written in terms of the Chern numbers of the projections, \ie  $\sigma_{b_\pm}(\mathfrak{p}_\mu):=\frac{e^2}{h}{\rm Ch}(\mathfrak{p}_{\mu_\pm})\in \Z$, where  $e>0$ is the magnitude of the electron charge and $h$ is the Planck's constant.


\medskip

Let $\sigma(\Delta)$ be the conductance associated with the interface current carried by the extended states of $\hat{\h}$  with energy inside of $\Delta.$   Our third main result states that the interface conductance can be written as a difference of the bulk magnetic invariants of the system.


\begin{theorem}[Bulk-interface correspondence]\label{Teo bulk-interface}
    Let the bulk gap assumption be valid and $\mu\in \Delta$. The interface conductance $\sigma(\Delta)$ carried by the states of energy inside   $\Delta$ is given by 
\begin{equation*}
    \sigma(\Delta)\;=\;\sigma_{b_+}(\mathfrak{p}_{\mu})-\sigma_{b_-}(\mathfrak{p}_{\mu})
\end{equation*}
\end{theorem}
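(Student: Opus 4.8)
The plan is to realize the interface conductance $\sigma(\Delta)$ as a noncommutative pairing between a $K_1$-class of the interface algebra $\mathfrak{I}_\alpha$ and a cyclic cohomology class built from the infinite interface trace $\mathcal{T}_\alpha$, and then to transport this pairing to the bulk algebra $\A_{\rm bulk}$ through the duality between the connecting maps in $K$-theory and in cyclic cohomology. From the extension \eqref{eq: 1} I would first write the associated six-term exact sequence in $K$-theory and isolate the exponential map $\exp\colon K_0(\A_{\rm bulk})\to K_1(\mathfrak{I}_\alpha)$. The bulk gap assumption ensures that $\mathfrak{p}_\mu$ is a genuine projection in $\A_{\rm bulk}$, so $[\mathfrak{p}_\mu]\in K_0(\A_{\rm bulk})$ is defined and its image $\exp[\mathfrak{p}_\mu]\in K_1(\mathfrak{I}_\alpha)$ makes sense.

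Next I would describe $\exp[\mathfrak{p}_\mu]$ explicitly. Fixing a smooth switch function $g$ that equals $1$ below $\Delta$ and $0$ above it, the self-adjoint element $g(\hat{\h})\in\A_\alpha$ lifts $\mathfrak{p}_\mu$, whence $\exp[\mathfrak{p}_\mu]=\big[\expo{2\pi\ii\, g(\hat{\h})}\big]$. Since $g$ is integer-valued on the spectrum of $\h={\rm ev}(\hat{\h})$, one has ${\rm ev}\big(\expo{2\pi\ii\, g(\hat{\h})}\big)=1$, so $\expo{2\pi\ii\, g(\hat{\h})}-1\in\mathfrak{I}_\alpha$ defines the desired unitary class. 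Following the framework of \cite{Deni1}, the interface conductance is the Kubo-type quantity obtained by pairing this unitary with the cyclic $1$-cocycle $\zeta_\alpha$ associated to $\mathcal{T}_\alpha$ and to the canonical derivation $\nabla$ along the interface, up to the prefactor $\tfrac{e^2}{h}$; that is,
$$
\sigma(\Delta)\;=\;\frac{e^2}{h}\,\big\langle\, \exp[\mathfrak{p}_\mu]\,,\,[\zeta_\alpha]\,\big\rangle\;,
$$
and the integrality of this $K_1$-with-$HC^1$ pairing already accounts for the quantization of $\sigma(\Delta)$.

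The heart of the argument is the compatibility of the two connecting maps: under the pairing between $K$-theory and cyclic cohomology, the exponential map $\exp$ is dual to the boundary map $\partial$ in cyclic cohomology attached to the same extension \eqref{eq: 1}. This gives the transport identity
$$
\big\langle\, \exp[\mathfrak{p}_\mu]\,,\,[\zeta_\alpha]\,\big\rangle\;=\;\big\langle\, [\mathfrak{p}_\mu]\,,\,\partial[\zeta_\alpha]\,\big\rangle\;,
$$
with $\partial[\zeta_\alpha]\in HC^2(\A_{\rm bulk})$ a cyclic $2$-cocycle on the bulk algebra. It then remains to identify this cocycle. Exploiting $\A_{\rm bulk}=\A_{b_+}\oplus\A_{b_-}$ and the fact that the interface bounds the two half-planes with opposite transverse orientations, I would show that $\partial[\zeta_\alpha]$ restricts on the two summands to $\pm$ the Chern cocycle ${\rm Ch}$ defined by the respective per-unit-volume traces $\mathcal{T}_\pm$. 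The opposite signs are the discrete analogue of the fundamental theorem of calculus: the net flow across the interface telescopes into the two asymptotic bulk contributions at $\pm\infty$ in the transverse direction. Evaluating on $[\mathfrak{p}_\mu]=\big([\mathfrak{p}_{\mu_+}],[\mathfrak{p}_{\mu_-}]\big)$ then gives $\big\langle [\mathfrak{p}_\mu],\partial[\zeta_\alpha]\big\rangle={\rm Ch}(\mathfrak{p}_{\mu_+})-{\rm Ch}(\mathfrak{p}_{\mu_-})$, and multiplication by $\tfrac{e^2}{h}$ reproduces $\sigma_{b_+}(\mathfrak{p}_\mu)-\sigma_{b_-}(\mathfrak{p}_\mu)$.

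The main obstacle I expect is precisely this last identification: computing the dual boundary cocycle $\partial[\zeta_\alpha]$ and matching it with the difference of the two bulk Chern cocycles, with the correct relative sign. This demands relating the infinite interface trace $\mathcal{T}_\alpha$ to the finite traces $\mathcal{T}_\pm$ across the extension, controlling the summability/trace-class estimates that make both the Kubo formula and its pairing well defined, and checking that the orientation of the interface enters the two half-planes with opposite signs. By contrast, the remaining ingredients — the six-term sequence, the switch-function description of $\exp[\mathfrak{p}_\mu]$, and the duality of connecting maps — are standard and can be imported essentially verbatim from the $\alpha=\pm\infty$ analysis in \cite{Deni1}, since Theorem \ref{Teo: 1.2} guarantees that the relevant $K$-groups and generators are unchanged across all $\alpha$.
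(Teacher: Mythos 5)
Your overall skeleton---express $\sigma(\Delta)$ as the pairing of $\mathtt{exp}([\mathfrak{p}_\mu])\in K_1(\I_\alpha)$ with the cyclic $1$-cocycle $[\zeta_\alpha]$ built from the interface trace, and then reduce to the bulk Chern numbers---matches the paper's strategy (Theorem \ref{Teo current}). However, the step you delegate to a ``duality of connecting maps,'' namely $\langle \mathtt{exp}[\mathfrak{p}_\mu],[\zeta_\alpha]\rangle=\langle[\mathfrak{p}_\mu],\partial[\zeta_\alpha]\rangle$ with $\partial[\zeta_\alpha]$ identified as the difference of the two bulk Chern cocycles, is precisely the crux, and it is not available ``essentially verbatim'' here. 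The introduction of the paper explains why: the duality between the edge $1$-cocycle and the bulk $2$-cocycle in \cite{Tom} rests on the Connes--Thom isomorphism for the \emph{smooth} Toeplitz extension, and for irrational $\alpha$ the interface algebra $\I_\alpha$ of the sequence \eqref{eq: 1} is \emph{not} of that form --- it is not isomorphic to the smooth interface algebra (they even have different $K$-theory: $K_1(\I_\alpha)=\Z$ versus $K_1(\tilde{\I}_\alpha^b)=\Z^2$), so the boundary map in cyclic cohomology dual to $\mathtt{exp}$ for \eqref{eq: 1} cannot be computed by quoting the standard Toeplitz/suspension duality. You correctly flag this identification as the main obstacle, but you do not close it, and the route you propose for closing it is the one the paper argues is obstructed.

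What the paper does instead is split the problem differently. First, a purely $K$-theoretic argument: since $K_1(\I_\alpha)=\Z[\mathfrak{w}_\alpha]$ with $[\mathfrak{w}_\alpha]=\mathtt{exp}([(\mathfrak{0},\mathfrak{p}_{b_+})])=\mathtt{exp}(-[(\mathfrak{p}_{b_-},\mathfrak{0})])$ and the unit classes of $K_0(\A_{\rm bulk})$ die under $\mathtt{exp}$, decomposing $[\mathfrak{p}_\mu]$ on the generators gives $\mathtt{exp}(-[\mathfrak{p}_\mu])=\big({\rm Ch}(\mathfrak{p}_{\mu_+})-{\rm Ch}(\mathfrak{p}_{\mu_-})\big)[\mathfrak{w}_\alpha]$ with no cyclic cohomology involved. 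The only analytic input is then the single normalization $\W_\alpha([\mathfrak{w}_\alpha])=1$ (Theorem \ref{Teo el 1}), and this is where all the work of Appendix \ref{app smooth} goes: one maps the smooth interface algebra $\tilde{\I}_\alpha^{b_+}=\mathcal{A}_{b_+}\rtimes_{\tau^\bot}\R$ into $\I_\alpha$, checks compatibility of the traces \eqref{eq: traces}, invokes the cocycle duality of \cite{Tom} for the \emph{auxiliary} sequence \eqref{seq smooth} (not for \eqref{eq: 1}), and transports the answer via naturality of the exponential map. Finally, your closing claim that the remaining ingredients can be imported verbatim from \cite{Deni1} ``since Theorem \ref{Teo: 1.2} guarantees that the relevant $K$-groups and generators are unchanged across all $\alpha$'' is factually wrong: $K_1(\A_\alpha)$ drops from $\Z^3$ to $\Z^2$ and $K_0(\I_\alpha)$ jumps from $\Z$ to $\Z^2$ when $\alpha$ becomes irrational, which is exactly why the irrational case requires the new argument.
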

The relevance of Theorem \ref{Teo bulk-interface} is its independence of the values of $\alpha\in\overline{\R}$. A precise statement of this result with a detailed proof is presented in Theorem \ref{Teo current}.

\medskip

{The result above offers a not-innocent generalization of the bulk-interface correspondence established in \cite{Deni1, Dom, Kot}. The major innovation here lies in the treatment of interface currents at irrational slope. In fact, in the irrational case, the loss of symmetry along the magnetic interface made complicated the definition of the trace per unit of surface of the interface, and in turn one ends in the challenge of providing a good calculation of the expected value of observables near the interface.
This fact was already noted in the context of geometric defects in materials in \cite[Section 5.1]{Guo}. The present work, not only establishes the existence of a good interface trace. It also shows the uniqueness of such trace (Proposition \ref{prop: interface trace}). Furthermore, this trace provides the correct $1$-cocycle which 
computes the interface currents when paired with a suitable $K$-class. This discovery, along with the techniques introduced here, paves the way for extending well-known results on bulk-edge correspondence to settings where symmetry along the defect is lost. }

   \medskip

{
It is important to point out that there are a few works dealing with materials with irrational straight boundaries. For instance, in the monograph \cite{Tom} is considered the so-called \emph{smooth Toeplitz extension} \cite{Les,JI, JK}  to deal with operators in half-plane spaces with an arbitrary angle. This \virg{smooth approximation} means that the usual half-plane $C^*$-algebra is replaced by a new  $C^*$-algebra obtained by a \virg{smooth} approximation of the half-plane (Toeplitz) projection.
In this setting, the authors construct the trace as the dual of the bulk-trace, and the bulk-edge correspondence is obtained from the Connes-Tom isomorphism and the duality of Chern cocycles.  However, without this smooth approximation, the trace in \cite{Tom} cannot be defined in the same way since the duality is lost. It is worth pointing out that  
our strategy is independent of such an approximation procedure. Moreover, it can be verified that our trace is directly related to the trace in \cite{Tom} as shown in
   Appendix \ref{app smooth}.   Let us end by observing that significant contributions are also contained in \cite{LT} where the coarse geometry is used to establish a version of the bulk-edge correspondence for a broad class of geometries, including those coarsely equivalent to straight-line irrational defects. However, it is difficult, or at least not immediately clear, how this approach could be used to derive the equivalent of Theorem \ref{Teo bulk-interface}. One main problem is related to the fact that  Roe $C*$-algebras are insensitive to the twists generated by magnetic fields (see \eg \cite[Section 2.1]{Mey}). Moreover, the numerical invariants are not well defined in the Roe $C^*$-algebras in view of the lack of a nice trace.}

\medskip

\noindent
{\bf Acknowledgements:}
    The authors would like to cordially thank to M. Moscolari, E. Prodan, and T. Stoiber for several stimulating discussions.  D. P.'s was supported by the U.S. NSF through the grants DMR-1823800 and CMMI-2131760, and by U.S. Army Research Office through contract W911NF-23-1-0127.  G. D.'s research is supported by the grant \emph{Fondecyt Regular - 1230032}. 
 \section{Magnetic algebras}

\subsection{Algebra of the magnetic translations}
In this section, we introduce the magnetic algebras following the presentation in the previous works  \cite{Deni1,Dani}.

\medskip

For two-dimensional systems in the tight-binding approximation, the \emph{magnetic field} orthogonal to the plane is represented by a function $B\colon \Z^2\to \R$ and the corresponding \emph{vector potential} by a function $A_B\colon \Z^2\times \Z^2\to \R$ whose circulation is equal to $B$. Namely, $B(n) =\mathfrak{C}[A_{B}](n)$ for all $n\in\Z^2$, where
\begin{equation*}
    \begin{split}
     \mathfrak{C}[A_{B}](n)\;:=\;&A_{B}(n,n-e_1)+A_{B}(n-e_1,n-e_1-e_2)\\&+A_{B}(n-e_1-e_2,n-e_2)+A_{B}(n-e_2,n)  \;.
    \end{split}
\end{equation*}
Along this work $e_1:=(1,0)$ and $e_2:=(0,1)$ denote the canonical basis of $\Z^2$. The \emph{standard}  vector potential associated with the magnetic field $B$ is given by
   \begin{equation}\label{eq: vector potential}
       A_{B}(n,n-e_j)\;:=\;\delta_{j,1}\left(\delta_{n_2>0}\sum_{m=1}^{n_2}B(n_1,m)-\delta_{n_2<0}\sum_{m=0}^{|n_2|-1}B(n_1,-m)\right)\;,
       \end{equation}
        where $n=(n_1,n_2)\in \Z^2$. 
        \begin{remark}
        When the magnetic field is independent of the $y$-axis one obtains that 
        \[
        A_B(n,n-e_j)\;=\;\delta_{j,1} n_2 B(n)\;,
        \]
         that is, $A_B$ is a \emph{Landau-type} vector potential  \cite[Section 2.2]{Deni3}.   \hfill $\blacktriangleleft$
        \end{remark}
 The magnetic translations $\mathfrak{s}_1$ and $\mathfrak{s}_2$ associated to the magnetic potential $A_{B}$ defined in \eqref{eq: vector potential}, are the unitary operators acting on $\psi\in \ell^2(\Z^2)$ via
  \begin{equation}\label{eq:mag_tras}
  \begin{split}
       (\mathfrak{s}_1\psi)(n)\;&:=\;\expo{\ii A_{B}(n,n-e_1)  }\psi(n-e_1)\;,\\
         (\mathfrak{s}_2\psi)(n)\;&:=\;\psi(n-e_2)\;.
  \end{split}
    \end{equation} 
In this paper we will identify the group $\n{U}(1)$ of the complex numbers of modulus one with the one-dimensional torus $\n{T}$.
Let $f_{B}:\Z^2\to \T$ be the function
\begin{equation}\label{eq:f_B}
f_{B}(n)\;:=\;\expo{\ii B(n)}\;,\qquad n\in\Z^2,
\end{equation}
which provides the  exponential of  the \emph{magnetic flux} through the  unit cell sited in $n$, and define the associated \emph{flux operator} as
\begin{equation*}
    (\mathfrak{f}_{B}\psi)(n)\;:=\; f_{B}(n)\psi(n)\;,\qquad \psi\in \ell^2(\Z^2)\;.
\end{equation*}
A straightforward computation shows that the magnetic translations
 satisfy the condition
\begin{equation}\label{equ 2.4}
    \mathfrak{s}_1\;\mathfrak{s}_2\;=\;\mathfrak{f}_B\;\mathfrak{s}_2\;\mathfrak{s}_1\;.
\end{equation}
\begin{definition}\label{Def: magnetic algebras} The \emph{magnetic} algebra $\A_{B}$ in the standard gauge $A_B$ given in \eqref{eq: vector potential}, is the  sub-$C^*$-algebra of $\mathcal{B}(\ell^2(\Z^2))$ generated by the magnetic translations $\mathfrak{s}_j$, \ie
$$\A_{B}\;:=\;C^*(\mathfrak{s}_1,\mathfrak{s}_2)\;.$$
\end{definition}
\begin{remark}
 As we pointed out before, several examples of magnetic algebras can be found in \cite{Deni1, Dani}. See also the monographs \cite{PRO, Tom}, where disordered magnetic algebras are constructed. \hfill $\blacktriangleleft$  
\end{remark}

For any $\theta=(\theta_1,\theta_2)\in [0,2\pi)^2$ let us consider the unitary operator $\mathfrak{v}_\theta$ which acts on  $\psi\in \ell^2(\Z^2)$  as
\begin{equation}\label{eq: action}
    (\mathfrak{v}_\theta\psi)(n)\;:=\;\expo{-\ii\theta\cdot n}\psi(n)\;
    \end{equation}
    where $\theta\cdot n:=n_1\theta_1+n_2\theta_2$.
These unitary operators define a continuous group action  $\T^2\ni\theta\mapsto \eta_\theta$  
by internal automorphisms
of the magnetic algebra $\A_B$ \cite[Proposition 2.12]{Deni1}. Consequently, one can introduce the infinitesimal generators $\nabla_1$ and $\nabla_2$ as
$$\nabla_1(\mathfrak{a})\;:=\;\lim_{\theta_1\rightarrow0}\frac{\eta_{(\theta_1,0)}(\mathfrak{a})-\mathfrak{a}}{\theta_1}\;,\qquad \nabla_2(\mathfrak{a})\;:=\;\lim_{\theta_2\rightarrow0}\frac{\eta_{(0,\theta_2)}(\mathfrak{a})-\mathfrak{a}}{\theta_2}$$
for  suitable elements $\mathfrak{a}\in \A_B$. Indeed, if $\mathfrak{a}$ lies in the domain of $\nabla:=(\nabla_1,\nabla_2)$ then one gets  that
$$\nabla_i(\mathfrak{a})\;=\;\ii [\mathfrak{n}_j,\mathfrak{a}],\;\qquad j=1,2$$
where $(\mathfrak{n}_j\psi)(n):=n_j\psi(n)$ are the position operators on $\ell^2(\Z^2).$
\medskip

Let $\A_B^0\subset \A_B$ be the dense subalgebra of non-commutative polynomials in the variables $\mathfrak{s}_1$ and $\mathfrak{s}_2$. In this work, we will need to consider  the Banach spaces
\begin{equation}
    \mathcal{C}^k(\A_{B})\;:=\;\overline{\A_{B}^0}^{\|\cdot\|_k},
\end{equation}
where the norm $\|\cdot\|_k$ is given by
$$\|\mathfrak{a}\|_k\;:=\;\sum_{i=0}^k\sum_{a+b=i}\|\nabla_1^a\nabla_2^b \mathfrak{a}\|\;.$$



\subsection{Magnetic hull}

Let $\rr{m}:\ell^\infty(\Z^2)\hookrightarrow \s{B}(\ell^2(\Z^2))$ be the injective $*$-morphism which associates to each $g\in\ell^\infty(\Z^2)$ the multiplication operator $\rr{m}_g$ and consider the $\Z^2$-action defined on $\ell^\infty(\Z^2)$ by
\begin{equation}
    \sigma_\gamma(g)(n)\,:=g(n-\gamma)\;,\qquad n,\gamma\in \Z^2\;.
\end{equation}
For every $g\in \ell^\infty(\Z^2)$ and $\gamma=(\gamma_1,\gamma_2)$ it holds true that 
\begin{equation}\label{equ 2.6}
\sigma_\gamma\left(\mathfrak{m}_{g}\right)\;:=\;    \mathfrak{m}_{\sigma_\gamma(g)}\;=\;(\mathfrak{s}_1)^{\gamma_1}\;(\mathfrak{s}_2)^{\gamma_2}\;\mathfrak{m}_g\;(\mathfrak{s}_2)^{-\gamma_2}\;(\mathfrak{s}_1)^{-\gamma_1}
\end{equation}
 with an innocent abuse of notation in the use of $\sigma_\gamma$.
In view of  \eqref{eq:f_B} and \eqref{equ 2.6} one gets that 
$$
\sigma_\gamma(\mathfrak{f}_{B})\;=\;\mathfrak{m}_{\sigma_\gamma({f}_{B})}\;\in\; \A_{B}\;,\qquad \forall\;\gamma\in \Z^2\;.
$$ 
Hence, the $C^*$-algebra
\begin{equation}\label{eq: dense}
    \mathfrak{F}_{B}\;:=\;C^*\big\{\sigma_\gamma(\mathfrak{f}_{B})\;|\;\gamma\in \Z^2\big\}\;\subset\; \A_{B}
\end{equation}
 is a commutative sub $C^*$-algebra of $\A_{B}$  with unit $\mathfrak{1}$.  Moreover,
by construction, it is invariant under the $\Z^2$-action $\sigma$ implemented by  $\gamma\mapsto\sigma_\gamma$. 
 \begin{definition}
     The magnetic Hull $\Omega_B$ of the algebra $\A_B$ is the Gelfand spectrum of $\mathfrak{F}_B$. Say in other words,  $\Omega_B$ is a compact Hausdorff space such that
     $$ 
\mathfrak{F}_{B}\;\simeq\; C(\Omega_{B})\;.$$
 \end{definition}
\begin{remark}
Given that $\sigma_\gamma(f_B)\in \ell^\infty(\Z^2)$ for all $\gamma\in \Z^2$, it follows that $C(\Omega_B)$ can be identified with a sub-$C^*$-algebra of $\ell^\infty(\Z^2)$. Furthermore, the isomorphisms 
    $$\ell^\infty(\Z^2)\;= \; C_b(\Z^2)\;\simeq \; C(\beta\Z^2)$$
show that there exists a continuous surjective map $\mathtt{r}\colon \beta\Z^2\to \Omega_B$, where $\beta \Z^2$ is the Stone-\v{C}ech compactification of $\Z^2$ and $C_b(\Z^2)$ denotes the algebra of continuous bounded functions. Thus the space $\Omega_B$ can be described in terms of filters on $\Z^2$  (see \cite[Section 2.2]{Dani}
\hfill $\blacktriangleleft$  
\end{remark}

The importance of the magnetic hull relies on the fact that allows us to write any magnetic algebra as an iterated crossed product of $C^*$-algebras \cite{Dav,Wil} or as a twisted crossed-product \cite{PR}. A discussion of this result can be found in \cite[Appendix A]{Deni1}
\begin{proposition}\label{prop: cros}
It holds true that$$\A_{B}\;\simeq  \;\big(\mathfrak{F}_B\rtimes_{\sigma_1}\Z\big)\rtimes_{\sigma_2}\Z\;\simeq\;\mathfrak{F}_B\rtimes_{\sigma,\theta_B}\Z^2$$ as $C^*$-algebras, where the automorphism $\sigma_i$ is given by conjugation for $\mathfrak{s}_i$ with $i=1,2,$ and the $2$-cocycle $\theta_B\colon \Z^2\times \Z^2\to \mathcal{U}(\mathfrak{F}_B)$ is defined via
\begin{equation}\label{cocycle}
    \theta_B(r,s)\;=\;\prod_{n\in \Lambda(r,s)}\sigma_n(\mathfrak{f}_B)^*
\end{equation}
where $\Lambda(r,s)=([r_1,r_1+s_1-1]\times [0,r_2-1])\cap \Z^2$ and $\mathcal{U}(\mathfrak{F}_B)$ is the set of unitary operators on $\mathfrak{F}_B.$
\end{proposition}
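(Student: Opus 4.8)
The plan is to exhibit $\A_B$ as the image of the twisted crossed product under a canonical surjection and then promote that surjection to an isomorphism by a faithfulness argument; the iterated description follows from the standard disassembly of a $\Z^2$-crossed product into two $\Z$-crossed products. \textbf{Step 1 (covariant data).} First I would check that $\theta_B$ from \eqref{cocycle} is a normalized $\mathcal{U}(\mathfrak{F}_B)$-valued $2$-cocycle for $\sigma$, i.e. $\sigma_r\big(\theta_B(s,t)\big)\,\theta_B(r,s+t)=\theta_B(r,s)\,\theta_B(r+s,t)$; this is bookkeeping on the rectangular index sets $\Lambda(r,s)$, whose disjoint unions match the regions swept out when the group law is applied twice. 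Writing $W_\gamma:=\mathfrak{s}_1^{\gamma_1}\mathfrak{s}_2^{\gamma_2}$, the covariance relation \eqref{equ 2.6} gives $W_\gamma\,\mathfrak{m}_g\,W_\gamma^*=\mathfrak{m}_{\sigma_\gamma(g)}$, while iterating the commutation rule \eqref{equ 2.4} (namely $\mathfrak{s}_2\mathfrak{s}_1=\mathfrak{f}_B^*\,\mathfrak{s}_1\mathfrak{s}_2$) to reorder $W_rW_s$ into $W_{r+s}$ produces exactly one factor $\sigma_n(\mathfrak{f}_B)^*$ for each lattice point $n\in\Lambda(r,s)$, yielding the twisted product rule $W_rW_s=\theta_B(r,s)\,W_{r+s}$. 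Hence the inclusion $\mathfrak{F}_B\hookrightarrow\A_B$ together with $\gamma\mapsto W_\gamma$ is a covariant representation of $(\mathfrak{F}_B,\Z^2,\sigma,\theta_B)$ on $\ell^2(\Z^2)$.

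\textbf{Step 2 (surjection).} The universal property of the twisted crossed product then furnishes a $*$-homomorphism $\Phi\colon\mathfrak{F}_B\rtimes_{\sigma,\theta_B}\Z^2\to\A_B$ with $\Phi|_{\mathfrak{F}_B}=\mathrm{id}$ and $\Phi(\lambda_\gamma)=W_\gamma$, where $\lambda_\gamma$ are the canonical implementing unitaries. As $\Phi(\lambda_{e_1})=\mathfrak{s}_1$ and $\Phi(\lambda_{e_2})=\mathfrak{s}_2$ generate $\A_B=C^*(\mathfrak{s}_1,\mathfrak{s}_2)$, the map $\Phi$ is onto.

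\textbf{Step 3 (injectivity).} Since $\Z^2$ is amenable, the full and reduced twisted crossed products agree, so the source carries a faithful conditional expectation $E_{\mathrm{cp}}$ onto $\mathfrak{F}_B$, given by deleting all Fourier coefficients with $\gamma\neq0$. On $\A_B$ I would build a matching expectation from the dual action: a direct computation with \eqref{eq: action} shows $\mathfrak{v}_\theta\,\mathfrak{s}_j\,\mathfrak{v}_\theta^*=\expo{-\ii\theta_j}\mathfrak{s}_j$ while $\mathfrak{v}_\theta$ commutes with every multiplication operator, so $\eta_\theta$ implements the dual $\T^2$-action and $E(\mathfrak{a}):=\int_{\T^2}\eta_\theta(\mathfrak{a})\,\dd\theta$ is a conditional expectation of $\A_B$ onto its $\eta$-fixed subalgebra, which one checks by normal-ordering words in $\mathfrak{s}_1,\mathfrak{s}_2$ (via \eqref{equ 2.4}) to be exactly $\mathfrak{F}_B$. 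Because $\Phi$ intertwines the two $\T^2$-actions, $E\circ\Phi=E_{\mathrm{cp}}$ under the identification of coefficient algebras; therefore $\Phi(x)=0$ gives $E_{\mathrm{cp}}(x^*x)=E\big(\Phi(x)^*\Phi(x)\big)=0$, and faithfulness of $E_{\mathrm{cp}}$ forces $x=0$. Thus $\Phi$ is an isomorphism.

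\textbf{Step 4 (iterated form) and main obstacle.} For the first isomorphism one adjoins $\mathfrak{s}_1$ to $\mathfrak{F}_B$ to form $\mathfrak{F}_B\rtimes_{\sigma_1}\Z$ and then adjoins $\mathfrak{s}_2$; by \eqref{equ 2.4} conjugation by $\mathfrak{s}_2$ acts on the intermediate algebra as $\sigma_2$ on $\mathfrak{F}_B$ and sends the first unitary to $\mathfrak{f}_B^*\,\mathfrak{s}_1$, and the accumulated twist across the two steps reassembles into $\theta_B$; this is the usual iteration of crossed products \cite{Wil,PR}, and the same expectation argument identifies it with $\A_B$. I expect the genuine difficulty to be the injectivity step, specifically verifying that $E$ lands in $\mathfrak{F}_B$ and not a larger fixed-point algebra, and that $E\circ\Phi=E_{\mathrm{cp}}$; both hinge on a careful Fourier-coefficient and normal-ordering analysis of the crossed product, which is where the commutation relation \eqref{equ 2.4} must be used most delicately.
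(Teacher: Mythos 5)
Your proposal is correct and follows the standard route: exhibit $(\mathfrak{F}_B,\gamma\mapsto W_\gamma)$ as a covariant representation with twist $\theta_B$ obtained by normal-ordering via \eqref{equ 2.4}, get a surjection from the universal property, and upgrade it to an isomorphism by matching the canonical faithful conditional expectation (amenability of $\Z^2$) with the averaging over the dual $\T^2$-action $\eta_\theta$. The paper itself does not prove this proposition but defers to the discussion in \cite[Appendix A]{Deni1}, and your argument is essentially the one carried out there, so no substantive divergence to report.
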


Since $\Omega_B$ is the set of characters of $\mathfrak{F}_B$, one can endow $\Omega_B$ with a $\Z^2$-action: For each $\gamma\in \Z^2$ one defines  $\sigma^*_\gamma(\omega)(\mathfrak{g}):=\omega(\sigma_{-\gamma}(\mathfrak{g}))$ with $\omega\in \Omega_{B}$ and $\mathfrak{g}\in \mathfrak{F}_B.$ Let us define the set $\mathtt{Orb}(\omega_0):=\{\sigma_\gamma^*(\omega_0)\,:\,\gamma\in \Z^2\},$ where $\omega_0$ is the character given by the evaluation at the point $(0,0)\in \Z^2$. As a result of \cite[Proposition 2.21]{Deni1}, it turns out that
\begin{equation}\label{eq: dense1}
  \Omega_B\;=\;\overline{\mathtt{Orb}(\omega_0)}\;.  
\end{equation}

\begin{remark}
    The space $\Omega_B$ is a $*$-weak closed subset of the unit ball of the dual space of  $\mathfrak{F}_B$. Furthermore, from its very definition $\mathfrak{F}_B$ is separable, and in turn $\Omega_B$ is metrizable. The metric can be chosen as
\begin{align}
\overline{d}\big(\omega_1,\omega_2)\;:=\;\sum_{i=1}^\infty \dfrac{|\omega_1(\mathfrak{f}_i)-\omega_2(\mathfrak{f}_i)|}{2^i}\;,\qquad \quad\omega_1,\omega_2\in \Omega_{B}.
\end{align}
where $\{\mathfrak{f}_i\}_{i\in \N}$ is any enumeration of the set $\{\sigma_\gamma(\mathfrak{f}_B)\;|\;\gamma\in \Z^2\}$. \hfill $\blacktriangleleft$
\end{remark}

\medskip

Along this work we will focus on magnetic fields whose range is finite, that is, ${\rm Im}(B)=\{b_1,b_2,\dots,b_m\}\subset \R.$ Therefore, it is natural to identify the magnetic hull as a subset of the \emph{full-shift} space 
\[
{\bf \Omega}\;:=\;\{\expo{\ii b_1},\expo{\ii b_2},\dots,\expo{\ii b_m}\}^{\Z^2}\;.
\] 
Let us recall that $\Om$ endowed with the \emph{prodiscrete topology} is a compact metric totally disconnected topological space without isolated points (a Cantor set). A base for this topology is given by the cylinder sets
\begin{equation}\label{base}
    Z(\beta)\;=\;\big\{ x\in \Om\,\colon\, x|_{{\rm supp}(\beta)}=\alpha|_{{\rm supp}(\beta)}\big\}
\end{equation}
parametrised by $\beta\in C_c\big(\Z^2,\{\expo{\ii b_1},\,\dots,\expo{\ii b_m}\}\big)$. There is also a $\Z^2$-action given by
 \begin{equation}
     \sigma'_\gamma(x)(n)\;:=\;x(n+\gamma)\;,\qquad \gamma, n\in \Z^2
 \end{equation}
 for all $x\in \Om$ (see for instance \cite{CeCo12}).
 Explicitly, it is possible to give a metric compatible with the topology as follows. For $x_1,x_2\in \Om$ one sets 
 \begin{align}\label{metric}
    d\big(x_1,x_2\big)\;:=\;\sum_{i=0}^\infty \dfrac{s_i(x_1,x_2)}{2^{i+1}},\;\qquad \;s_i(x_1,x_2)\;=\;\begin{cases}
        0,&\mbox{ if }x_1|_{[-i,i]^2}=x_2|_{[-i,i]^2}\\
1,&\mbox{otherwise},
    \end{cases}
\end{align}
where $[-i,i]^2$ is meant as the finite subset of $\Z^2$ made of points contained in the square centered at zero with sides $2i$ for every $i\in\mathbb{N}$. 
Notice that $f_B$, defined in \eqref{eq:f_B}, lies in $ \Om$ and hence one can consider the associated \emph{sub-shift} $\Omega'_{B}\subset \Om$ generated by $f_{B}$, that is, $\Omega'_{B}:=\overline{\mathtt{Orb}(f_{B})}$. 
\begin{proposition}\label{Prop: iden}
If ${\rm Im}(B)=\{b_1,b_2,\dots,b_m\}$ then there exists a conjugacy between the dynamical systems $(\Omega_{B},\sigma^*,\Z^2)$ and $(\Omega'_{B},\sigma',\Z^2)$, \ie, an homeomorphism $T:\Omega_B\to \Omega_B'$ such that $T(\sigma^*_{n}(\omega))=\sigma'_n(T(\omega))$ for each $\omega\in\Omega_B$ and $n\in\mathbb{Z}^2$.
\end{proposition}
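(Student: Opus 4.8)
The plan is to realize the conjugacy explicitly by reading off a character of $\mathfrak{F}_B$ as the symbolic array of its values on the $\Z^2$-shifts of the flux $\mathfrak{f}_B$. Since $\mathfrak{F}_B$ is generated as a $C^*$-algebra by the family $\{\sigma_\gamma(\mathfrak{f}_B)\}_{\gamma\in\Z^2}$, every character $\omega\in\Omega_B$ is completely determined by the numbers $\omega(\sigma_\gamma(\mathfrak{f}_B))$. First I would define $T\colon\Omega_B\to\Om$ by
\[
T(\omega)(n)\;:=\;\omega\big(\sigma_{-n}(\mathfrak{f}_B)\big)\;,\qquad n\in\Z^2\;,
\]
the sign being chosen so that the distinguished character $\omega_0$ (evaluation at the origin) is sent to $f_B$ itself; indeed $T(\omega_0)(n)=(\sigma_{-n}(f_B))(0)=f_B(n)$.

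The first thing to check is that $T$ actually lands in $\Om$, i.e.\ that $T(\omega)(n)\in\{\expo{\ii b_1},\dots,\expo{\ii b_m}\}$ for every $n$. This is where the finiteness of ${\rm Im}(B)$ enters: each generator $\sigma_\gamma(\mathfrak{f}_B)=\mathfrak{m}_{\sigma_\gamma(f_B)}$ is a unitary multiplication operator by a function taking values in the finite set $\{\expo{\ii b_1},\dots,\expo{\ii b_m}\}$, so its spectrum equals its (finite, hence closed) range, a subset of that set. Since the value of any character on an element lies in the spectrum of that element, $T(\omega)(n)$ is forced into $\{\expo{\ii b_1},\dots,\expo{\ii b_m}\}$, as required. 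Continuity of $T$ from the weak-$*$ topology of $\Omega_B$ to the prodiscrete topology of $\Om$ is then immediate: the latter is generated by the coordinate projections, and each composite $\omega\mapsto T(\omega)(n)=\omega(\sigma_{-n}(\mathfrak{f}_B))$ is, by definition, weak-$*$ continuous.

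Next I would verify the three facts that upgrade $T$ to a conjugacy. Equivariance is a direct computation using the group law $\sigma_\gamma\circ\sigma_\delta=\sigma_{\gamma+\delta}$:
\[
T(\sigma^*_\gamma(\omega))(n)\;=\;\omega\big(\sigma_{-\gamma}\sigma_{-n}(\mathfrak{f}_B)\big)\;=\;\omega\big(\sigma_{-(n+\gamma)}(\mathfrak{f}_B)\big)\;=\;T(\omega)(n+\gamma)\;=\;\sigma'_\gamma\big(T(\omega)\big)(n)\;.
\]
Injectivity follows because two characters agreeing on all generators $\sigma_{-n}(\mathfrak{f}_B)$ agree on the $C^*$-algebra they generate, namely $\mathfrak{F}_B$. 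Finally, to identify the image I would use $T(\omega_0)=f_B$ together with equivariance to get $T(\Orb(\omega_0))=\Orb(f_B)$; continuity and the density $\Omega_B=\overline{\Orb(\omega_0)}$ from \eqref{eq: dense1} yield $T(\Omega_B)\subseteq\overline{\Orb(f_B)}=\Omega'_B$, while compactness of $\Omega_B$ makes $T(\Omega_B)$ a closed set containing $\Orb(f_B)$, hence all of $\Omega'_B$. Thus $T\colon\Omega_B\to\Omega'_B$ is a continuous, equivariant bijection.

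The conclusion is then forced: a continuous bijection from a compact space onto a Hausdorff space is a homeomorphism, so $T$ is the sought conjugacy. I do not expect a serious obstacle here; the entire content is the observation that, because $B$ has finite range, the Gelfand-spectral description of the hull and the symbolic subshift description carry exactly the same information. The only points requiring genuine care are the bookkeeping of signs in the definition of $T$ and in the two $\Z^2$-actions $\sigma^*$ and $\sigma'$, and the verification that the image is precisely the subshift $\Omega'_B$ rather than a larger closed invariant set — both handled by the orbit-density identity \eqref{eq: dense1}.
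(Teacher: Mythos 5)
Your proposal is correct and follows essentially the same route as the paper: the same map $T(\omega)(n)=\omega(\sigma_{-n}(\mathfrak{f}_B))$, the same equivariance computation, injectivity from agreement on generators, and surjectivity from $T(\omega_0)=f_B$ plus continuity, equivariance, compactness, and the orbit-density identity. The only stylistic differences are that you justify $T(\Omega_B)\subseteq\Om$ by a spectral-permanence argument and prove continuity via coordinate projections, where the paper uses an explicit $\varepsilon$--$\delta$ estimate with the two metrics; both are sound.
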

\begin{proof}
Let us choose an enumeration $\{\mathfrak{f}_i\}_{i\in\mathbb{N}}$ of the set  
$\{\sigma_n(\mathfrak{f}_{B})\;|\;n\in \Z^2\}$ so that for every $m\in \N$ the subset  
\[\{\sigma_n(\mathfrak{f}_B)\mid n\in[-m,m]^2\}\;=\;\{\mathfrak{f}_i\mid 1\leq i\leq (2m+1)^2\}
\]
consists of the \virg{first} $(2m+1)^2$ elements. 
Define the map $T\colon \Omega_{B}\to {\bf \Omega}$ by $T(\omega)(n)=\omega(\sigma_{-n}(\mathfrak{f}_{B}))$ for every $n\in \Z^2$ and $\omega\in \Omega_{B}$. Notice that 
 \begin{equation}\label{eq: omega zero}
T(\omega_0)(n)\;=\;\omega_0(\sigma_{-n}(\mathfrak{f}_{B}))\;=\;f_{B}(n),     
 \end{equation}
 so in view of \eqref{eq: dense1} one gets that $T$ is well defined. 
 Let us show that $T$ is continuous. 
  Let $\varepsilon>0$ and $m\in\mathbb{N}$ such that $\frac{1}{2^{m}}\leq\varepsilon$.
  Consider   ${\delta=\frac{1}{2^{(2m+1)^2}}}$.
If $\overline{d}(\omega_1,\omega_2)<\delta$, we obtain that $\omega_1(\mathfrak{f}_i)=\omega_2(\mathfrak{f}_i)$ for every ${1\leq i\leq (2m+1)^2}$ and this leads $$T(\omega_1)(n)\;=\;\omega_1(\sigma_{-n}(\mathfrak{f}_{B}))\;=\;\omega_2(\sigma_{-n}(\mathfrak{f}_{B}))\;=\;T(\omega_2)(n),$$ for every $n\in [-m,m]^2$. Consequently, $d\big(T(\omega_1),T(\omega_2))<\frac{1}{2^{m}}\leq \varepsilon$ and one concludes that $T$ is continuous.
The map $T$ is injective since the relation $T(\omega_1)=T(\omega_2)$ implies that $\omega_1(\sigma_{-n} (\mathfrak{f}_{B}))=\omega_2(\sigma_{-n} (\mathfrak{f}_{B}))$ for every $n\in\mathbb{Z}^2$.
Moreover, for $s\in\mathbb{Z}^2$ one has 
\begin{align*}
    T\big(\sigma^*_n(\omega)\big)(s)\;&=\;\omega\big(\sigma_{-n-s} (\mathfrak{f}_{B})\big)\;=\;T(\omega)(n+s)\;=\;\sigma'_{n}(T(\omega))(s).
\end{align*}
and in turn $T$ is $\Z^2$-equivariant.
Since \eqref{eq: omega zero} says that $T(\omega_0)=f_{B}$, one infers from the continuity and the $\Z^2$-equivariance  that $T(\Omega_{B})=\Omega'_{B}$ \ie, that $T$ is surjective. 
Since $T$ is a continuous bijection between compact and Hausdorff spaces, it turns out
that $T$ is indeed a homeomorphism.
\end{proof}


\section{Structure and $K$-theory of the  Iwatsuka algebras}\label{section iwa}
This section is devoted to the study of magnetic algebras associated with Iwatsuka-like magnetic fields. We compute the magnetic hull and the K-theory for such $C^*$-algebras.

\subsection{Iwatsuka algebras}
Let $\overline{\R}:=[-\infty,+\infty]$ be the extended real line (two points compactification) and $\alpha \in \overline{\R}$. The \emph{Iwatsuka magnetic} field $B_\alpha\colon \Z^2\to \R$ is defined by
\begin{equation}
   {B_{\alpha}}(n)\;:=\; \left\{ \begin{array}{lll}
             b_+ &\quad &  -\alpha n_1+n_2 >  0\\
           b_- &\quad&  \text{otherwise},
             \end{array}
   \right.\qquad    {B_{\pm \infty}}(n)\;:=\; \left\{ \begin{array}{lll}
             b_\mp &\quad &   n_1> 0\\
           b_\pm &\quad &\text{otherwise}. 
             \end{array}   \right.
\end{equation}
where $b_+,b_-\in\R$ so that $b_{+}-b_-\notin 2\pi\Z$. The \emph{Iwatsuka algebra} $\A_{\alpha}$ is the  $C^*$-algebra related to $B_\alpha$ according to the Definition \ref{Def: magnetic algebras}. We shall use the notation  $\mathfrak{f}_\alpha$, $\mathfrak{F}_\alpha$ and $\Omega_\alpha$ for all the objects associated to such $C^*$-algebra. Along this work, we say that $\alpha$ is rational when $\alpha$ is either a rational number or $\pm \infty$, and we say that $\alpha$ is irrational otherwise.
\medskip

\subsection{Iwatsuka magnetic hulls}\label{sec: hull} In this section we characterize in detail the magnetic hull of the Iwatsuka 
algebra with a special focus on the topology. 

\medskip

For every $\alpha\in\R$ let us introduce the subgroup $\n{X}_\alpha$ of $\R$ given by
\[
\n{X}_\alpha\;:=\;\left\{x^\alpha_n:=-\alpha n_1+n_2\;|\; n=(n_1,n_2)\in\Z^2\right\}
\]
The cases $\alpha=\pm\infty$ are similar with $x^{\pm\infty}_n:=\mp n_1$. Observe that $\n{X}_\alpha$ labels the points of $\mathtt{Orb}(\omega_0)$ via the map
\begin{equation}\label{eq: map homeo}
   \mathtt{Orb}(\omega_0)\ni \sigma_n^*(\omega_0)\;\mapsto\;x_n^\alpha \in \n{X}_\alpha
\end{equation}
For $\alpha$ rational this map is a homeomorphism since it is bijective and the group $\n{X}_\alpha$  and the set $\mathtt{Orb}(\omega_0)$ are discrete spaces. Hence $\n{X}_\alpha$ determines the topology of $\Omega_\alpha$ in view of \eqref{eq: dense1}. Nevertheless, for $\alpha$ irrational this correspondence is no longer a homeomorphism although it remains continuous as we shall see in Proposition \ref{prop. surjective}.

\medskip

Since $B_\alpha$ meets the assumptions of Proposition \ref{Prop: iden}, we can use the identification $\omega_0\equiv f_{B}$ in order to work inside the full shift space $\Om=\{\expo{\ii b_+},\expo{\ii b_-}\}.$ In the following, we pursue a detailed description of the elements and the topology of $\Omega_{\alpha}.$  For that, consider $x\in \R$, $n\in \Z^2$ and define the following elements in $\Om$
\begin{equation}\label{omegas}
\begin{split}
\omega_+(n)\;&:=\;\expo{\ii b_+},\qquad  \omega_x(n)\;:=\;\begin{cases}
        \expo{\ii b_+},&\mbox{ if } \;x_n^\alpha-x\;>\;0,\\
        \expo{\ii b_-},&\mbox{ otherwise, }\end{cases}\\
    \omega_-(n)\;&:=\;\expo{\ii b_-},\qquad
\omega_\bullet(n)\;:=\;\begin{cases}
        \expo{\ii b_+},&\mbox{ if } \;x_n^\alpha\;\geq\; 0,\\
        \expo{\ii b_-},&\mbox{ otherwise, }\end{cases}
\end{split}
\end{equation}
\begin{remark}\label{rk:0-bull}
    When $\alpha$ is rational a simple exercise verifies that $\omega_\bullet$ is actually an element in $\mathtt{Orb}(\omega_0)$. Indeed, if $\alpha=p/q$ with $p$ and $q$ relatively prime with $q>0$ and let $s$ and $t$ be integer numbers such that $1=-ps+qt.$  It follows that  $\omega_0\big(n+(s,t)\big)=\omega_\bullet(n)$ because $(-p/q)(n_1+s)+n_2+t>0 $ if and only if $(-p/q) n_1+n_2\geq 0.$ The case $\alpha=\pm\infty$ is similar.       \hfill $\blacktriangleleft$
\end{remark}
\begin{proposition}
    It holds that $\omega_\pm\in \Omega_\alpha$ for all $\alpha\in \overline{\R}$.
\end{proposition}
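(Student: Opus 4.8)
The plan is to realise both constant configurations $\omega_\pm$ as limits of translates of the generating point, exploiting the identification furnished by Proposition \ref{Prop: iden}. Via the conjugacy $T$ we may work inside the full shift $\Om$ under the identification $\omega_0\equiv f_{B_\alpha}$, so that $\Omega_\alpha=\overline{\mathtt{Orb}(f_{B_\alpha})}$ with orbit $\{\sigma'_\gamma(f_{B_\alpha})\mid \gamma\in\Z^2\}$. Recall that, by the metric \eqref{metric}, a sequence in $\Om$ converges to a limit precisely when it eventually agrees with that limit on every finite window $[-i,i]^2$. Thus it suffices to produce translates of $f_{B_\alpha}$ that, on any prescribed window, are eventually constant equal to $\expo{\ii b_+}$ (resp.\ $\expo{\ii b_-}$).

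The key observation is the additivity $x^\alpha_{n+\gamma}=x^\alpha_n+x^\alpha_\gamma$ of the labels introduced for $\n{X}_\alpha$ (with the obvious reading $x^{\pm\infty}_n=\mp n_1$ in the two extreme cases). Since $f_{B_\alpha}(m)=\expo{\ii b_+}$ exactly when $x^\alpha_m>0$, this gives $\sigma'_\gamma(f_{B_\alpha})(n)=f_{B_\alpha}(n+\gamma)=\expo{\ii b_+}$ iff $x^\alpha_n+x^\alpha_\gamma>0$; in the notation \eqref{omegas} this reads $\sigma'_\gamma(f_{B_\alpha})=\omega_{-x^\alpha_\gamma}$. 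Hence translating $f_{B_\alpha}$ amounts to sliding the defining interface along $\R$, and the task reduces to choosing $\gamma$ so that $x^\alpha_\gamma\to+\infty$ (for $\omega_+$) or $x^\alpha_\gamma\to-\infty$ (for $\omega_-$).

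For finite $\alpha$ I would take $\gamma_k=(0,k)$, so that $x^\alpha_{\gamma_k}=k$; for $\alpha=\pm\infty$ a shift in the first coordinate, e.g.\ $\gamma_k=(\mp k,0)$, achieves the same. On a fixed window $[-i,i]^2$ the quantity $x^\alpha_n$ takes finitely many values and is in particular bounded below, so for all $k$ large enough one has $x^\alpha_n+x^\alpha_{\gamma_k}>0$ for every $n\in[-i,i]^2$; consequently $\sigma'_{\gamma_k}(f_{B_\alpha})$ coincides with $\omega_+$ on that window. As the window is arbitrary this yields $\sigma'_{\gamma_k}(f_{B_\alpha})\to\omega_+$, whence $\omega_+\in\overline{\mathtt{Orb}(f_{B_\alpha})}=\Omega_\alpha$. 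The configuration $\omega_-$ is obtained symmetrically by letting $x^\alpha_{\gamma_k}\to-\infty$ (e.g.\ $\gamma_k=(0,-k)$), so that every point of a fixed window eventually satisfies $x^\alpha_n+x^\alpha_{\gamma_k}<0$ and the translate equals $\expo{\ii b_-}$ there.

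I do not expect a genuine obstacle here: the argument is simply that pushing the interface line past any finite region leaves that region monochromatic. The only points requiring a little care are bookkeeping ones — unwinding the identification of Proposition \ref{Prop: iden} so that the claim really is about $\Omega_\alpha$, and treating the extreme slopes $\alpha=\pm\infty$ with the convention $x^{\pm\infty}_n=\mp n_1$. In particular, no use of the density of $\n{X}_\alpha$ (irrational case) or of its discreteness (rational case) is needed, since monotone translates already suffice; that dichotomy will instead be the crux of the later, harder statements about the global topology of $\Omega_\alpha$.
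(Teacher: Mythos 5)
Your proof is correct and follows essentially the same route as the paper: both realise $\omega_\pm$ as limits of translates $\sigma'_{\gamma_k}(\omega_0)$ chosen so that $x^\alpha_{\gamma_k}\to\pm\infty$, pushing the interface past any finite window. Your version is in fact slightly more careful, since translating along $e_2$ (which the paper invokes for $\alpha\ge 0$) does nothing when $\alpha=\pm\infty$, a case you correctly handle with $e_1$-translates.
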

\begin{proof}
     Notice that for $\alpha\geq 0$ the elements  $\omega_\pm$ can be obtained as a limit of the Cauchy sequences  $\omega^{k}_\pm:=\sigma^*_{\pm k e_2}(\omega_0)$, respectively. The case $\alpha<0$ follows with the same argument by using  the sequences  $\omega^{k}_\pm:=\sigma^*_{\pm k e_1}(\omega_0)$ and so one gets that $\omega_\pm\in \Omega_\alpha$ for each  $\alpha$.
\end{proof}
Now we characterize the magnetic hull of the Iwatsuka algebra for any $\alpha$. We start with the rational case which is just an adaptation of \cite[Example 2.24]{Deni1} which concerns the cases $\alpha=\pm\infty$. Let us point out that the symbol $\sqcup$ is used in the following to denote the \emph{disjoint} union. We will also use the short notation $\Z_q:=\Z/(q\Z)$ and the convention $\Z_0:=\{\ast\}$ a singleton.

\begin{proposition}\label{teo_disc_rat}
       Let $\alpha=p/q$ with $p$ and $q$ relatively primes and $q\geq0$. Then, $\Omega_\alpha$ is a countable space and  agrees with the two-point compactification of $\Z\times\Z_q$, \ie
 \begin{equation}
       \Omega_\alpha\;=\;\mathtt{Orb}(\omega_\bullet)\sqcup\big\{ \omega_+,\omega_-\big\}\;\simeq\; (\Z\times\Z_q)\sqcup\big\{\pm \infty\big\}
   \end{equation}
\end{proposition}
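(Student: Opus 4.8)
The plan is to transport the problem into the full shift $\Om$ through the conjugacy of Proposition \ref{Prop: iden}, identify $\omega_0 \equiv f_{B}$, and reduce everything to the additive structure of $\n{X}_\alpha$. A direct computation from the definitions gives $\sigma'_\gamma(f_{B}) = \omega_{-x^\alpha_\gamma}$ for every $\gamma\in\Z^2$: indeed $\sigma'_\gamma(f_{B})(n)=f_{B}(n+\gamma)$ equals $\expo{\ii b_+}$ exactly when $x^\alpha_{n}>-x^\alpha_\gamma$, which matches the definition of $\omega_{-x^\alpha_\gamma}$ in \eqref{omegas}. Since $\n{X}_\alpha=-\n{X}_\alpha$ is a subgroup, this yields $\mathtt{Orb}(\omega_0)=\{\omega_x\mid x\in\n{X}_\alpha\}$. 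For $\alpha=p/q$ with $\gcd(p,q)=1$ and $q\geq 1$ I would write $x^\alpha_n=(-pn_1+qn_2)/q$ and use that $-pn_1+qn_2$ ranges over $\gcd(p,q)\Z=\Z$ to conclude $\n{X}_\alpha=\tfrac1q\Z$, a discrete countable subgroup order-isomorphic to $\Z$. By Remark \ref{rk:0-bull} one has $\omega_\bullet\in\mathtt{Orb}(\omega_0)$, hence $\mathtt{Orb}(\omega_\bullet)=\mathtt{Orb}(\omega_0)$; the degenerate value $q=0$ is the standard Iwatsuka case $\alpha=\pm\infty$, already treated in \cite[Example 2.24]{Deni1}.

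Next I would prove that the orbit is a countable \emph{discrete} subset of $\Omega_\alpha$ and that $x\mapsto\omega_x$ is injective. Fixing $x=k/q$, since the values $\{x^\alpha_s\mid s\in\Z^2\}$ exhaust $\tfrac1q\Z$, I can pick a finite window $[-i,i]^2$ containing points $s,s'$ with $x^\alpha_s=k/q$ and $x^\alpha_{s'}=(k+1)/q$; on this window $\omega_x$ takes the value $\expo{\ii b_-}$ at $s$ and $\expo{\ii b_+}$ at $s'$, and a short comparison shows $\omega_{x'}$ agrees with $\omega_x$ there only when $x'=x$. This simultaneously gives injectivity and isolates each $\omega_x$ in $\Omega_\alpha$. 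For the closure, recall that convergence in the metric $d$ of \eqref{metric} is convergence on every window $[-i,i]^2$. Given an infinite sequence of distinct orbit points $\omega_{x_k}$, boundedness of $(x_k)$ is impossible (it would force finitely many values in the discrete set $\tfrac1q\Z$), so $|x_k|\to\infty$ along subsequences; comparing $x_k$ with $M_i:=\max_{[-i,i]^2}x^\alpha_s$ and its minimum analogue shows that $\omega_{x_k}\to\omega_-$ when $x_k\to+\infty$ and $\omega_{x_k}\to\omega_+$ when $x_k\to-\infty$. Thus the only accumulation points are $\omega_+$ and $\omega_-$; these lie outside the orbit (they are constant while each $\omega_x$ takes both values) and are distinct because $b_+-b_-\notin 2\pi\Z$. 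Therefore $\Omega_\alpha=\overline{\mathtt{Orb}(\omega_\bullet)}=\mathtt{Orb}(\omega_\bullet)\sqcup\{\omega_+,\omega_-\}$ is countable.

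To finish, the bijection $\Phi$ sending $\omega_x\mapsto x$ for $x\in\tfrac1q\Z$, $\omega_-\mapsto+\infty$ and $\omega_+\mapsto-\infty$ maps $\Omega_\alpha$ onto the two-point compactification of $\tfrac1q\Z$; its continuity at the two added points is exactly the convergence established above, so being a continuous bijection between compact Hausdorff spaces it is a homeomorphism. Identifying $\tfrac1q\Z\cong\Z\cong\Z\times\Z_q$ (for instance via the order-preserving lexicographic bijection $k/q\mapsto(\lfloor k/q\rfloor,\,k\bmod q)$) then yields $\Omega_\alpha\simeq(\Z\times\Z_q)\sqcup\{\pm\infty\}$. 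I expect the genuine work to be the closure/convergence analysis of the second paragraph, and in particular the isolation argument: it is precisely the discreteness of $\n{X}_\alpha=\tfrac1q\Z$ that makes each orbit point isolated and forces the appearance of exactly two accumulation points, a feature that fails in the irrational case of Proposition \ref{teo cantor}. The $\Z\times\Z_q$ relabeling is only cosmetic bookkeeping.
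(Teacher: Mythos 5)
Your proof is correct, and it follows the same basic reduction as the paper's — the orbit of $\omega_0$ is labelled by the subgroup $\n{X}_\alpha$, which for $\alpha=p/q$ in lowest terms equals $\tfrac1q\Z$ by B\'ezout — but it is genuinely more self-contained in the part that carries the topological content. The paper, after recording the group isomorphism $\n{X}_\alpha\simeq\Z\times\Z_q$ (chosen so that the $\Z^2$-action becomes translation on the $\Z$-factor), delegates the entire closure analysis to \cite[Example 2.24]{Deni1}; you instead prove it directly, via the two-point window $\{s,s'\}$ with $x^\alpha_s=k/q$ and $x^\alpha_{s'}=(k+1)/q$ (injectivity of $x\mapsto\omega_x$ and isolation of each orbit point) and the $\max/\min$ comparison on windows (exactly two accumulation points $\omega_\pm$). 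This buys independence from the cited example and makes transparent that discreteness of $\n{X}_\alpha$ is the mechanism that fails in Proposition \ref{teo cantor}; what it gives up is the equivariant bookkeeping: your identification $\tfrac1q\Z\cong\Z\times\Z_q$ is only an order-theoretic relabeling, not the group isomorphism under which the $\Z^2$-action is translation on the $\Z$-factor. That is immaterial for the homeomorphism claimed here, but it is the form the paper actually feeds into Theorem \ref{teo: interface} and Proposition \ref{teo: k-groups rational}, so the relabeling is cosmetic only for this statement, not for its later use. Two harmless remarks: your computation gives $\sigma'_\gamma(f_{B})=\omega_{-x^\alpha_\gamma}$ where the paper writes $\sigma^*_n(\omega_0)=\omega_{x^\alpha_n}$; since $\n{X}_\alpha=-\n{X}_\alpha$ the orbit is the same set either way. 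And for the final homeomorphism it is cleaner to check continuity of $\Phi^{-1}$ at $\pm\infty$ (which is literally the convergence you established) and invoke that a continuous bijection from the compact two-point compactification to the Hausdorff space $\Omega_\alpha$ is a homeomorphism, rather than arguing continuity of $\Phi$ at $\omega_\pm$, which requires the converse implication (it does follow from your accumulation-point dichotomy, but deserves a sentence).
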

\begin{proof}
 The map introduced in \eqref{eq: map homeo} and  Remark \ref{rk:0-bull} provide the following $\Z^2$-equivariant homeomorphism
\[
\mathtt{Orb}(\omega_\bullet)\;=\;\mathtt{Orb}(\omega_0)\;\simeq\;\n{X}_\alpha\;.
\]
For $\alpha=\pm\infty$  or $\alpha=0$ (in both cases one can assume $q=0$) one has that $\mathbb{X}_\alpha=\Z$ and this case reduces to \cite[Example 2.24]{Deni1}. If $\alpha=p/q$ with  $q>0$, then 
$\mathbb{X}_\alpha \simeq \Z\times \mathbb{Z}_q$ as a group via the map
    $$\mathbb{X}_\alpha\ni - (p/q)n_1+n_2\;\mapsto\; \big(-pn_1'+n_2,[r]\big)$$
    where $n_1=qn_1'+r$ with $0\leq r\leq q-1$.
Moreover, 
the $\Z^2$-action on $\mathbb{X}_\alpha$ becomes the standard  $\Z$-action by translation on the factor $\Z$. Thus one also concludes this case with \cite[Example 2.24]{Deni1} and this yields that $\Omega_\alpha$ is homeomorphic to the two-point compactification of $\Z\times \mathbb{Z}_q$. 
\end{proof}

\medskip

Let us  observe that $\Z\times \mathbb{Z}_q$  becomes \virg{larger} as $q$ grows. This   gives us a clue as to why $\Omega_\alpha$ becomes uncountable for  $\alpha$ irrational  as described in the next result.

\begin{proposition} \label{teo cantor}
    The compact Hausdorff space $\Omega_\alpha$ is a Cantor set when $\alpha\in\R\setminus\Q$. An explicit description is 
   \begin{equation}\label{irrational_cantor}
       \Omega_\alpha\;=\;\mathtt{Orb}(\omega_\bullet)\sqcup\big\{\omega_x\big\}_{x\in \R}\sqcup\big\{ \omega_+,\omega_-\big\}\;.
   \end{equation}
where the topology is the subspace topology induced from the full-shift space $\Om.$

\end{proposition}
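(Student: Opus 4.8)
The plan is to carry out the whole analysis inside the full-shift space $\Om=\{\expo{\ii b_+},\expo{\ii b_-}\}^{\Z^2}$, using the identification $\omega_0\equiv f_{B_\alpha}$ of Proposition \ref{Prop: iden}, and to begin by computing the relevant orbits explicitly. Since $x^\alpha_{m+n}=x^\alpha_m+x^\alpha_n$ and $\sigma'_n(\omega_0)(m)=\omega_0(m+n)$, a direct check shows that $\sigma'_n(\omega_0)=\omega_{-x^\alpha_n}$, so $\mathtt{Orb}(\omega_0)=\{\omega_x\,:\,x\in\n{X}_\alpha\}$. Writing $\omega^\bullet_y(m):=\expo{\ii b_+}$ if $x^\alpha_m\geq y$ and $\expo{\ii b_-}$ otherwise (so that $\omega^\bullet_0=\omega_\bullet$) and running the same computation with $\omega_\bullet$ in place of $\omega_0$ gives
\[
\sigma'_n(\omega_0)=\omega_{-x^\alpha_n},\qquad \sigma'_n(\omega_\bullet)=\omega^\bullet_{-x^\alpha_n},
\]
whence $\mathtt{Orb}(\omega_\bullet)=\{\omega^\bullet_y\,:\,y\in\n{X}_\alpha\}$. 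Thus the two orbits are the \emph{strict} and the \emph{non-strict} threshold configurations supported on the subgroup $\n{X}_\alpha\subset\R$, which for $\alpha$ irrational makes $n\mapsto x^\alpha_n$ injective and $\n{X}_\alpha$ dense in $\R$; density is the feature that will be used repeatedly below.

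For the inclusion of the right-hand side of \eqref{irrational_cantor} into $\Omega_\alpha=\overline{\mathtt{Orb}(\omega_0)}$ I would exhibit each listed element as a limit of orbit points, the \emph{direction of approach} being exactly what discriminates between the two families. Concretely, for $x\in\R$ one picks $x_k\in\n{X}_\alpha$ with $x_k\downarrow x$, so that the finitely many values $x^\alpha_m$ in any window separate from $x_k$ on the correct side and $\omega_{x_k}\to\omega_x$; the same sequences with $x_k\to\mp\infty$ produce $\omega_\pm$. Choosing instead $x_k\uparrow y$ with $x_k<y$ yields $\omega_{x_k}\to\omega^\bullet_y$, which places $\mathtt{Orb}(\omega_\bullet)$, in particular $\omega_\bullet$, inside $\Omega_\alpha$. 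For the reverse inclusion I would classify every subsequential limit of a sequence $\omega_{x_k}$, $x_k\in\n{X}_\alpha$: after passing to a subsequence with $x_k\to x_*\in[-\infty,+\infty]$, the limit is forced to be $\omega_\mp$ when $x_*=\pm\infty$; when $x_*\in\R\setminus\n{X}_\alpha$ it is $\omega_{x_*}$ (no coordinate $m$ satisfies $x^\alpha_m=x_*$); and when $x_*\in\n{X}_\alpha$ the single coordinate $m$ with $x^\alpha_m=x_*$ pins the limit to $\omega_{x_*}$ or $\omega^\bullet_{x_*}$ according as the $x_k$ approach from above or from below. In all cases the limit lies in the right-hand side, giving $\Omega_\alpha\subseteq$ RHS and hence equality.

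It then remains to verify disjointness of the three pieces and the Cantor property. Disjointness is again a density matter: an equality $\omega_x=\omega^\bullet_y$ would force $x=y\in\n{X}_\alpha$ (otherwise some value $x^\alpha_m$ strictly between $x$ and $y$ separates them), and then the two configurations differ exactly at the coordinate $m$ with $x^\alpha_m=x$; moreover $\omega_+\neq\omega_-$, and both differ from every threshold configuration, the latter being non-constant since $\n{X}_\alpha$ is unbounded above and below. For the topological identification I would invoke the standard characterization of the Cantor set as a nonempty, compact, metrizable, totally disconnected, perfect space: compactness holds because $\Omega_\alpha$ is closed in $\Om$, while metrizability and total disconnectedness are inherited from $\Om$. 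The substantive point — and the main obstacle — is \emph{perfectness}: once $\Omega_\alpha=$ RHS is known, the dichotomy above shows $\omega_x=\lim_{x'\downarrow x}\omega_{x'}$, $\omega^\bullet_y=\lim_{x'\uparrow y}\omega_{x'}$ and $\omega_\pm=\lim_{x_k\to\mp\infty}\omega_{x_k}$, with approximants in $\Omega_\alpha$ distinct from the limit by density, so no point is isolated. I expect the careful bookkeeping of the boundary coordinate at points of $\n{X}_\alpha$ — keeping the strict and non-strict families apart while simultaneously proving that together they exhaust $\Omega_\alpha$ and that neither produces an isolated point — to be the most delicate step.
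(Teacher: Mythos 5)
Your proposal is correct and follows essentially the same route as the paper's proof: identify the orbit points as threshold configurations parametrized by the dense subgroup $\n{X}_\alpha$, obtain every element of the right-hand side as a one-sided limit of orbit points, classify all accumulation points by passing to a subsequence where the thresholds converge monotonically in $\overline{\R}$, and conclude via closedness plus absence of isolated points. The explicit formula $\sigma'_n(\omega_0)=\omega_{-x^\alpha_n}$ and the verification of disjointness and perfectness are welcome clarifications of steps the paper leaves implicit, but they do not change the argument.
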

\begin{proof}
Since $\Omega_\alpha$ is a subset of the Cantor set $\Om$, then to conclude the proof it is enough to check the equality of sets and that all the elements in $\Omega_\alpha$  are limit points.  For that, let us first show that $\omega_x\in \Omega_\alpha$ for every $x\in\mathbb{R}$. If $x$ lies in the dense subgroup $\n{X}_\alpha$ of $\R$ it follows from its very definition that $x=x_n^\alpha$ for some $n\in \Z^2$, and this leads to $\omega_x=\sigma_n^*(\omega_0)\in \Omega_\alpha$. Now if $x\notin \n{X}_\alpha,$ there exists a sequence $\{x_j\}_{j\in\N}\subset \n{X}_\alpha$  such that $\lim_{j\to \infty}x_j=x$.  For $\epsilon>0$, consider $M\in \N$ so that $\epsilon>2^{-(M+1)}$ and define
\begin{equation}\label{eq: delta}
        \delta_M\;:=\;\min_{m\in [-M,M]^2}\big|x_m^\alpha-x\big|\;>\;0
\end{equation}
One has  $|x_j-x|<{\delta_M}$ for $j$ big enough. 
Let $n(j)\in\Z^2$ be such that $x_j\equiv x_{n(j)}^\alpha$.
We claim that
\begin{align}\label{Eq-m1}
    \sigma^*_{n(j)}(\omega_0)(m)\;=\;\omega_x (m)\;,\qquad  \forall \,m\in[-M,M]^2
\end{align}
for $j$ sufficiently large.
From its very definition, it follows that \eqref{Eq-m1} is fulfilled whenever the numbers $x_{m-n(j)}^\alpha$ and $x_m^\alpha-x$ have the same sign. A direct computation provides 
\begin{align}\label{eq_MM}
    \big|x_{m-n(j)}^\alpha-(x_m^\alpha - x)\big|\;&=\;\big|x_{n(j)}^\alpha-x\big|\;=\;|x_j-x|\;<\;{\delta_M}\;
\end{align}
for $j$ big enough and for every $m \in[-M,M]^2$. Let us now suppose that there exists a $m'\in[-M,M]^2$ such that $x_ {m'-n(j)}^\alpha$ and $x_{m'}^\alpha - x$   have different signs. Then
\[
\big|x_ {m'-n(j)}^\alpha-( x_{m'}^\alpha- x)\big|\;=\;|x_ {m'-n(j)}^\alpha|+|x_{m'}^\alpha - x|\;>\;\delta_M
\]
which is in contradiction with \eqref{eq_MM}. Therefore \eqref{Eq-m1} is proved and one gets 
$$
d\big(\sigma^*_{n(j)}(\omega_0),\omega_x\big)\;<\;\frac{1}{2^{M+1}}\;<\;\epsilon$$
for $j$ sufficiently large. This  yields  $\omega_x\in \overline{\mathtt{Orb}(\omega_0)}=\Omega_\alpha$ for every $x\in\R$. To verify that $\omega_\bullet\in \Omega_\alpha$, it is enough to check that the sequence $\{\omega_{-j^{-1}}\}_{j\in \N}\subset\Omega_\alpha$  converges to $\omega_\bullet$ when $j$ goes to infinity. As a consequence, one also has that $\Orb(\omega_\bullet)\subset \Omega_\alpha$.
 To sum up, one infers that $\{\omega_x\}_{x\in \R}\cup\{\omega_+,\omega_-\}\cup\Orb(\omega_\bullet)\subseteq \Omega_\alpha$.
Now let us verify the other inclusion.   Let $\omega\in\Omega_\alpha\setminus\{\omega_+,\omega_-\}$ and choose a sequence $\{n(j)\}_{j\in\mathbb{N}}\subseteq\mathbb{Z}^2$ such that $\{\sigma^*_{n(j)}(\omega_0)\}_{j\in\mathbb{N}}$ converges to $\omega$. Since $\omega$ is not constant there exists $r\in \N$ so that $\omega(0,r+1)\neq \omega(0,r)$. Consider now $M\in \N$ such that $M>r+1$, so there is $j_M\in \N$ such that  $\omega|_{[-M,M]^2}=\sigma^*_{n(j)}(\omega_0)|_{[-M,M]^2}$ for $j\geq j_M$. Since $(0,r)$ and $(0,r+1)$ belong to $[-M,M]^2$ then the definition of $\sigma^*_{n(j)}(\omega_0)$ with the above equality show that  the numbers $x_{(0,r)-n(j)}^\alpha$  and $x^\alpha_{(0,r+1)-n(j)}$ have different signs, that is,  $x_{(0,r+1)-n(j)}^\alpha>0$ and $x_{(0,r)-n(j)}^\alpha\leq 0$. 
Therefore one concludes that

\[
x_{(0,r)}^\alpha\;\leq\; x_{n(j)}^\alpha\;<\; x_{(0,r+1)}^\alpha
\]
So $\big\{x_{n(j)}^\alpha\big\}_{j\geq j_M}$ is a bounded sequence in $\R$. By the Bolzano-Weierstrass Theorem, we can assume that $x_{n(j)}^\alpha$ is a monotone sequence that converges to some $x\in \R$ as $j\to\infty$. Observe that when $x\notin\n{X}_\alpha$ one can proceed as before and check that the sequence $\sigma_{n(j)}^* (\omega_0)$ converges to $\omega_x$. On the other hand, if $x=x_n^\alpha\in \n{X}_\alpha$ and the sequence $x_{n(j)}^\alpha$ is increasing or decreasing then $\sigma_{n(j)}^* (\omega_0)$ converges either to  $\sigma_{n}^* (\omega_0)$ or $\sigma_{n}^* (\omega_\bullet)$, respectively. By uniqueness of the limit $\omega\in \Omega_\alpha$ in all the cases, one can conclude the other inclusion and \eqref{irrational_cantor}. Finally, $\Omega_\alpha$ is a Cantor set since it is a closed subset of $\Om$ without isolated points. \end{proof}

\begin{remark}
When $\alpha$ is irrational a base for the topology of $\Omega_\alpha$    is given by the collection $\mathfrak{B}$ of sets of the form $Z(\beta)\cap \Omega_\alpha$ 
     where the $Z(\beta)$'s are defined in \eqref{base}. In terms of the metric \eqref{metric}  the set
     $$B_r(\omega)\;=\;\big\{\omega'\in \Omega_\alpha\;|\; \omega'|_{[-i,i]^2}=\omega|_{[-i,i]^2}\big\}$$ 
     describes
     the open ball    of radius $r=1/2^{i+1}$  and center in $\omega\in \Omega_\alpha$.
     \hfill $\blacktriangleleft$
\end{remark}

\subsection{Invariant measures on the magnetic hulls}\label{sec:measures} In this section we compute all the invariant measures of the magnetic hull of the Iwatsuka algebra.

\medskip

   There is a natural action of $\n{X}_\alpha$ on $\R$  by translations:
\begin{equation}\label{gamma action}
    \zeta_\gamma(x)\;:=\;x+\gamma\;,\qquad \gamma\in \n{X}_\alpha
\end{equation}
   This action could be extended to $\overline{\R}$ by letting the points $\pm \infty$ invariants. It turns out that the map given in \eqref{eq: map homeo} can be continuously extend to a map $\pi:\Omega_\alpha\to \overline{\R}$ which is equivariant with respect to the $\Z^2$-action $\sigma^*$ on $\Omega_\alpha$ and the $\n{X}_\alpha$-action $\zeta$ on $\overline{\R}$. This fact is almost immediate for $\alpha\in\n{Q}\cup\{\pm\infty\}$   in view of the discreteness of  $\Omega_\alpha$ proved in Proposition \ref{teo_disc_rat}.
   The irrational case, based on the description of $\Omega_\alpha$   provided in Proposition \ref{teo cantor}, is more technical:
\begin{proposition}\label{prop. surjective} Let $\alpha\in\R\setminus\Q$.  
 The map $\pi\colon \Omega_\alpha\to \overline{\R}$ given by
\begin{equation}
    \begin{split} \pi(\omega_x)\;=\;x, \qquad \pi\big(\sigma^*_n(\omega_\bullet)\big)\;=\;x_n^\alpha,\qquad \pi(\omega_\pm)\;=\;\pm\infty\
    \end{split}
\end{equation}
 is continuous, surjective, and meets the following equivariance condition
\begin{equation}\label{eqivariance}
    \pi\big(\sigma_n^*(\omega)\big)\;=\;\zeta_{x_n^\alpha}\big(\pi(\omega)\big)\;=\;\pi(\omega)+x_n^\alpha\;,\qquad n\in \Z^2,\;\omega\in \Omega_\alpha\;.
\end{equation}
\end{proposition}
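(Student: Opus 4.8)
The plan is to establish the four assertions packaged in the statement—well-definedness, the equivariance relation \eqref{eqivariance}, surjectivity, and continuity—in increasing order of difficulty, with continuity carrying all the real content. Well-definedness is immediate from the disjoint decomposition \eqref{irrational_cantor} proved in Proposition \ref{teo cantor}: every $\omega\in\Omega_\alpha$ is exactly one of $\omega_x$ ($x\in\R$), $\sigma^*_n(\omega_\bullet)$ ($n\in\Z^2$), or $\omega_\pm$, so the three defining clauses never conflict. Surjectivity is just as quick: Proposition \ref{teo cantor} supplies $\omega_x\in\Omega_\alpha$ for every $x\in\R$, and $\pi(\omega_x)=x$ already exhausts $\R$, while $\pi(\omega_\pm)=\pm\infty$ realises the two endpoints, so $\pi(\Omega_\alpha)=\overline{\R}$.

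For the equivariance \eqref{eqivariance} I would argue clause by clause from the explicit formulas \eqref{omegas}. A direct substitution gives $\sigma^*_n(\omega_x)=\omega_{x+x_n^\alpha}$ and $\sigma^*_n\big(\sigma^*_m(\omega_\bullet)\big)=\sigma^*_{n+m}(\omega_\bullet)$, so on these two orbits $\pi$ simply increases by $x_n^\alpha$, which is exactly $\zeta_{x_n^\alpha}\circ\pi$. Since $\omega_+$ and $\omega_-$ are the two constant configurations, they are fixed by every $\sigma^*_n$, matching $\zeta_{x_n^\alpha}(\pm\infty)=\pm\infty$; this also explains why no sign information at the endpoints is forced by equivariance alone.

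The crux is continuity, and the one fact that drives everything is that $\n{X}_\alpha$ is a \emph{dense} subgroup of $\R$ (here the irrationality of $\alpha$ enters). The conceptual observation is that each $\omega\in\Omega_\alpha\setminus\{\omega_\pm\}$ is a \emph{threshold configuration} with cut $t:=\pi(\omega)$: by \eqref{omegas}, $\omega(m)=\expo{\ii b_+}$ whenever $x_m^\alpha>t$ and $\omega(m)=\expo{\ii b_-}$ whenever $x_m^\alpha<t$. To prove continuity at such an $\omega$ with finite $t$, I fix $\epsilon>0$ and use density to pick $m_-,m_+\in\Z^2$ with $t-\epsilon<x_{m_-}^\alpha<t<x_{m_+}^\alpha<t+\epsilon$; let $M$ bound $|m_\pm|_\infty$. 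Then, in the metric \eqref{metric}, any $\omega'\in\Omega_\alpha$ with $d(\omega,\omega')<2^{-(M+1)}$ agrees with $\omega$ on $[-M,M]^2$, so $\omega'(m_+)=\expo{\ii b_+}$ and $\omega'(m_-)=\expo{\ii b_-}$; as $\omega'$ is itself a threshold configuration, this forces $x_{m_-}^\alpha\le\pi(\omega')\le x_{m_+}^\alpha$ and hence $|\pi(\omega')-t|<\epsilon$. This sandwich estimate works verbatim even when $t\in\n{X}_\alpha$, where $\pi$ is two-to-one (the characters $\omega_{t}$ and the matching $\sigma^*_n(\omega_\bullet)$ share the cut $t$): the two strict one-sided neighbours $m_\pm$ still exist by density, and the estimate only involves the common value $t=\pi(\omega)$, so the collapse of two points of the Cantor set $\Omega_\alpha$ onto a single point of $\overline{\R}$ causes no trouble.

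The endpoints are where I expect the only genuine bookkeeping. Near $\omega_\pm$ the cut runs off to infinity: if $\omega'\to\omega_+$, then $\omega'$ coincides with the constant configuration $\expo{\ii b_+}$ on an arbitrarily large box $[-M,M]^2$, and since $\omega'$ is a threshold configuration its cut must lie beyond $\min_{m\in[-M,M]^2}x_m^\alpha$; because $\n{X}_\alpha$ is unbounded this minimum escapes to infinity as $M\to\infty$, so $\pi(\omega')$ diverges to an endpoint of $\overline{\R}$, and tracking which endpoint (via $\min$ versus $\max$ of $x_m^\alpha$ over the box) matches the assigned value $\pi(\omega_\pm)$; the case $\omega'\to\omega_-$ is symmetric. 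Since $\Omega_\alpha$ is metrizable, sequential continuity yields continuity. The main obstacle is thus entirely contained in the continuity step—converting \emph{agreement on a large box} into \emph{closeness of cuts}—which rests on the density and unboundedness of $\n{X}_\alpha$ and on the careful handling of the threshold values $t\in\n{X}_\alpha$ and of the two endpoints.
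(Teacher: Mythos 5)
Your proof is correct and rests on the same two pillars as the paper's argument: the explicit description of $\Omega_\alpha$ obtained in Proposition \ref{teo cantor} and the density of $\n{X}_\alpha$ in $\R$, which together convert agreement on a large box $[-M,M]^2$ into closeness of the cut parameters. The execution of the continuity step is, however, organised differently. The paper works sequentially and splits into cases according to the type of the limit point, using the one-sided quantity $\delta_M=\min_{m\in[-M,M]^2}|x_m^\alpha-x|$ of \eqref{eq: delta} when $x\notin\n{X}_\alpha$ and a modified $\hat\delta_M$ when the limit is $\omega_{x_m^\alpha}$ or $\sigma_m^*(\omega_\bullet)$. Your two-sided sandwich --- choosing $m_\pm$ with $t-\epsilon<x_{m_-}^\alpha<t<x_{m_+}^\alpha<t+\epsilon$ and reading $x_{m_-}^\alpha\le\pi(\omega')\le x_{m_+}^\alpha$ off the two pinned values $\omega'(m_\pm)$ --- treats all threshold configurations uniformly, including the two-to-one fibres over $\n{X}_\alpha$, and is in fact slightly tighter: the paper's inference from \virg{same signs on the box} to $|x_j-x|<\delta_M$ tacitly requires a near-minimiser of $|x_m^\alpha-x|$ on the correct side of $x$, which your explicit choice of $m_\pm$ supplies. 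One caveat: the endpoint \virg{bookkeeping} you defer should actually be carried out, because with the conventions of \eqref{omegas} a configuration agreeing with the constant value $\expo{\ii b_+}$ on $[-M,M]^2$ has its cut below $\min_{m\in[-M,M]^2}x_m^\alpha\to-\infty$, \ie $\omega_x\to\omega_+$ as $x\to-\infty$; the endpoint assignment $\pi(\omega_\pm)=\pm\infty$ and the sign in \eqref{eqivariance} must be oriented consistently with this (a labelling issue already present in the paper's statement, not a flaw in your mechanism).
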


\begin{proof}
It is a matter of straightforward computations to check that $\pi$ is continuous on $\omega_\pm$. So let us focus on the remaining points. The idea is to use sequential continuity which is equivalent on these spaces. Let us start with a convergent sequence of the form $\omega_{x_j}\to \omega$ with $x_j\in \R$ and $j\in \N$. If $\omega=\omega_x$ for some $x\notin \n{X}_\alpha$ consider $\delta_M>0$ defined in   \eqref{eq: delta} for some $M\in \N$. In view of $\n{X}_\alpha$ is dense in $\R$ one gets that $\delta_M\to 0$ as $M$ goes to infinity. 
Moreover, by the definition of the convergence of $\omega_{x_j}$, for any $M$ there exists $j$ big enough such that the numbers $x_n^\alpha-x_j$ and  $x_n^\alpha-x$ have the same sign for every $n$ in the square $[-M,M]^2$.
As a consequence, $|x_j-x|<\delta_M$ for $j$ big enough and this leads to $\pi(\omega_{x_j})\to \pi(\omega_x).$  Meanwhile if the limit point $\omega$ is either $\omega_x$ with $x=x_m^\alpha\in \n{X}_\alpha$ or $\omega=\sigma_m^*(\omega_\bullet),$ we can proceed as before by taking in this case 
   $$\hat{\delta}_M\;=\;\min_{n\in [-M,M]^2\setminus\{m\}}\big|x_n^\alpha-x\big|\;>\;0$$
and arrives in $|x_j-x|<\hat{\delta}_M\to 0$. 
Now if the sequence is contained in $\mathtt{Orb}(\omega_\bullet)$, that is, a sequence of the form $\sigma_{n(j)}^*(\omega_\bullet)$ with $n(j)\in \Z^2$, then the same analysis also applies for this case.
To sum up, if $\{\omega_j\}_{j\in \N}$ is sequence in $\Omega_\alpha$ that converges to $\omega$, then $\pi(\omega_j)\to \pi(\omega)$ as $j\to \infty.$ This concludes the continuity of $\pi.$\\
The surjectivity comes from the definition of $\pi.$ Finally, one has the relation
$$\pi\big(\sigma_n^*(\omega_x)\big))\;=\;\pi(\omega_{x+x_n^\alpha})\;=\;x+x_n^\alpha\;=\;\zeta_{x_n^\alpha}\big(\pi(\omega_x)\big)$$
which implies the equivariance \eqref{eqivariance} since the other cases follow from its very definition.
\end{proof}

Let us define the \emph{interface Hull} as the open invariant subset $\Omega_\alpha^\circ:=\Omega_\alpha\setminus\{\omega_\pm\}$.  Proposition \ref{teo_disc_rat} implies that for $\alpha\in\Q\cup\{\pm\infty\}$ one has the isomorphism $C_0(\Omega_\alpha^\circ)\simeq C_0(\n{X}_\alpha)$ of $C^*$-algebras induced by (the inverse of) the map \eqref{eq: map homeo}. When $\alpha$ is irrational Proposition \ref{prop. surjective} provides the following embedding of $C^*$-algebras
\begin{equation}\label{eq: embedding}
    \xymatrix{
 & C_0(\R)\ar[d]_{\hat{\pi}}\ar@{^{(}->}[r]^{i} & C(\overline{\R})\ar[d]^{\hat{\pi}} \\
&C_0(\Omega_\alpha^\circ)\ar@{^{(}->}[r]^{i} &  C(\Omega_\alpha) }
\end{equation}
where $\hat{\pi}(f)(\omega):=f(\pi(\omega))$ with $f\in C(\overline{\R})$ and $\omega\in \Omega_\alpha.$ These maps will play an important role in the computations of the invariant measures on $\Omega_\alpha.$

\begin{remark}
    The real line $\R$ could be considered as a \virg{smooth} interface hull. In view of it generates a suitable smooth edge algebra (\cf \cite{JI, JK, RX}  and \cite[Chapter 4]{Tom}). We will present a discussion of this algebra in Appendix \ref{app smooth}. \hfill $\blacktriangleleft$
\end{remark}

\begin{proposition}\label{Teo. measures}
    For any $\alpha\in \overline{\R}$ the dynamical system $(\Omega_{\alpha},\Z^2,\sigma^*)$ has only two ergodic probability measures $\mathbb{P}_{+}$ and $\mathbb{P}_{-}$ specified by $\mathbb{P}_{\pm}(\{\omega_\pm\})=1$. 
\end{proposition}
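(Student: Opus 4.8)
The plan is to treat the two Dirac measures and the converse inclusion separately. First I would observe that $\omega_\pm$ are genuine fixed points of $\sigma^*$: by \eqref{omegas} they are the constant configurations $n\mapsto\expo{\ii b_\pm}$, and the shift $\sigma'_\gamma(x)(n)=x(n+\gamma)$ leaves every constant configuration unchanged. Hence the Dirac masses $\mathbb{P}_\pm:=\delta_{\omega_\pm}$ are $\sigma^*$-invariant, and being supported on a single fixed point they are automatically ergodic. The real content of the statement is the converse: that any invariant probability measure is a convex combination $a\mathbb{P}_++(1-a)\mathbb{P}_-$, so that the ergodic measures (the extreme points of this simplex) are exactly $\mathbb{P}_+$ and $\mathbb{P}_-$.

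To prove the converse, let $\mu$ be any $\sigma^*$-invariant Borel probability measure on $\Omega_\alpha$ and push it forward through the equivariant map $\pi\colon\Omega_\alpha\to\overline{\R}$ of Proposition \ref{prop. surjective}; this map is available for every $\alpha$, being immediate in the rational case from the discreteness established in Proposition \ref{teo_disc_rat}. Setting $\nu:=\pi_*\mu$, a probability measure on $\overline{\R}$, the equivariance \eqref{eqivariance} together with the $\sigma^*$-invariance of $\mu$ yields $(\zeta_\gamma)_*\nu=\nu$ for every $\gamma\in\n{X}_\alpha$. In particular, since $1=x_{e_2}^\alpha\in\n{X}_\alpha$, the restriction $\nu|_\R$ is a finite measure on $\R$ invariant under the translation $\zeta_1\colon x\mapsto x+1$ (which fixes $\pm\infty$).

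The key step is to deduce $\nu(\R)=0$, and here a single fundamental-domain argument handles both the rational and irrational cases uniformly, sidestepping the dichotomy entirely: the translates $[0,1)+k$ with $k\in\Z$ partition $\R$, and $\zeta_1$-invariance forces $\nu\big([0,1)+k\big)=\nu\big([0,1)\big)$ for all $k$; since $\nu$ is finite this common value must vanish, so $\nu(\R)=0$ and $\nu$ is concentrated on $\{+\infty,-\infty\}$. Because the descriptions in Propositions \ref{teo_disc_rat} and \ref{teo cantor} give $\pi^{-1}(\{\pm\infty\})=\{\omega_\pm\}$, this means $\mu(\Omega_\alpha^\circ)=\nu(\R)=0$, so $\mu$ is supported on $\{\omega_+,\omega_-\}$ and thus $\mu=\mu(\{\omega_+\})\,\mathbb{P}_++\mu(\{\omega_-\})\,\mathbb{P}_-$.

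Consequently the set of invariant probability measures is precisely the segment $\{a\mathbb{P}_++(1-a)\mathbb{P}_-:a\in[0,1]\}$, whose extreme points are $\mathbb{P}_+$ and $\mathbb{P}_-$; since the ergodic measures are exactly the extreme points of the invariant-measure simplex, these are the only two ergodic probability measures, as claimed. I expect no serious obstacle in the measure theory itself; the only point demanding care is confirming that the equivariant map $\pi$ and the identification $\pi^{-1}(\{\pm\infty\})=\{\omega_\pm\}$ hold across both regimes, and the use of the single translation $\zeta_1$ rather than the whole group $\n{X}_\alpha$ is precisely what makes the argument insensitive to whether $\n{X}_\alpha$ is discrete or dense.
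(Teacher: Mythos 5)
Your proof is correct, and the key exclusion step is organized differently from the paper's. The paper assumes a third \emph{ergodic} measure exists, notes its support must lie in $\Omega_\alpha^\circ$, and then splits into cases: for rational $\alpha$ the pushforward under \eqref{eq: map homeo} would be an invariant (hence counting) measure on the discrete infinite group $\n{X}_\alpha$, while for irrational $\alpha$ the pushforward under $\pi$ would be invariant under the dense subgroup $\n{X}_\alpha$ and hence a multiple of Haar measure on $\R$ --- in both cases contradicting finiteness. You instead take an arbitrary invariant probability measure, push it forward through $\pi$, and kill the mass on $\R$ with a single fundamental-domain argument using only invariance under translation by $1\in\n{X}_\alpha$; this treats the rational and irrational regimes uniformly, avoids invoking uniqueness of Haar measure for a dense (non-closed) subgroup, and yields the slightly stronger conclusion that \emph{every} invariant probability measure is a convex combination of $\mathbb{P}_\pm$, from which ergodicity-as-extreme-points finishes the argument. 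The price is that you must know $\pi$ is defined, Borel, and equivariant in both regimes and that $\pi^{-1}(\{\pm\infty\})=\{\omega_\pm\}$; both facts are supplied by Propositions \ref{teo_disc_rat}, \ref{teo cantor} and \ref{prop. surjective}, so this is not a gap. One cosmetic slip: $x_{e_2}^{\pm\infty}=0$, so for $\alpha=\pm\infty$ the element $1\in\n{X}_{\pm\infty}=\Z$ is realized by $x^{\pm\infty}_{\mp e_1}$ rather than by $x^\alpha_{e_2}$; the conclusion $1\in\n{X}_\alpha$ holds for every $\alpha$ regardless.
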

\begin{proof}
Since $\omega_\pm$ are invariant points then it is not difficult to see that $\mathbb{P}_{\pm}$ are ergodic measures on $\Omega_\alpha$.  Now let us suppose that there exists another ergodic measure $\mu$ on $\Omega_\alpha$. Then the support of $\mu$ would be contained in the invariant subset $\Omega_\alpha^\circ$ and  $\mu(\Omega_\alpha^\circ)=1.$ For $\alpha$ rational the pushforward measure of $\mu$ through the map \eqref{eq: map homeo} induces the counting measure on the discrete subgroup $\n{X}_\alpha$ and this yields a contradiction.
On the other hand, for $\alpha$ irrational one knows $\pi(\Omega_\alpha^\circ)=\R$, so one can consider the pushforward $\mu$ measure induced by $\pi$, which is a probability measure on $\R$ invariant under the action of the subgroup $\n{X}_\alpha$ by Proposition \ref{prop. surjective}. Moreover, this group is dense on $\R$ so $\mu$ is, up to a positive constant, the Haar measure on $\R$. The above is a contradiction and this completes the proof.
\end{proof}

Arguing as in the proof of Proposition \ref{Teo. measures}, one can check that there is a unique invariant regular Borel measure $\mu_\alpha$, up to a positive scaling factor, supported in $\Omega_\alpha^\circ$. Indeed,  for $\alpha=p/q$   one can define $\mu_\alpha$ as the scale counting measure on $\Omega_\alpha^\circ\simeq \n{X}_\alpha$
\begin{equation}\label{eq:def_c}
    \mu_\alpha(A)\;=\;c_\alpha\; \sharp(A)\;,\qquad c_\alpha\;:=\frac{1}{\sqrt{p^2+q^2}}
\end{equation}
where $\sharp(A)$ stands for the cardinality of $A$ and  $c_\alpha$ is a convenient normalization constant whose role will  be clarified in Remark \ref{remark 1}.
For $\alpha=\pm\infty$ (with $p=\pm1$ and $q=0$) the subgroups $\n{X}_{\pm\infty}$ coincide with $\Z$ and $\mu_{\pm\infty}$ are just the counting measures (\ie $c_{\pm\infty}=1$). It is worth to observe that the measure $\mu_\alpha$ defined as above has the meaning of \emph{measure-per-unit length} along the interface. For $\alpha\in\R\setminus\Q$ a similar measure can be defined as follows. For each borelian $A\subset \Omega_\alpha^\circ$ one has that
$$\mu_\alpha(A)\;:=\;\lambda\big(\pi(A)\big)$$
where $\lambda$ is the usual Lebesgue measure on $\R$. Observe that this measure is well-defined since $\pi\colon \Omega_\alpha\to \overline{\R}$ meets all the conditions of \cite[2.2.13]{FED}, hence $\pi(A)$ is Lebesgue measurable.
\begin{lemma}
Let $\alpha\in\R\setminus\Q$. Then, $\mu_\alpha$ is a well-defined invariant regular measure on $\Omega_\alpha^\circ$.   
\end{lemma}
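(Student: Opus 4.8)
The plan is to verify, in order, the four features packaged in the statement: that $\pi(A)$ is $\lambda$-measurable so that the formula $\mu_\alpha(A):=\lambda(\pi(A))$ makes sense, that the resulting set function is countably additive, that it is $\sigma^*$-invariant, and that it is regular. The guiding observation is that, although $\pi$ is far from injective, its non-injectivity is confined to the countable (hence $\lambda$-null) set $\n{X}_\alpha$; this is what lets the ``pushforward-type'' formula behave like an honest measure. Here irrationality of $\alpha$ enters through Proposition \ref{prop. surjective}, which provides the continuous surjection $\pi$ with $\pi(\Omega_\alpha^\circ)=\R$ and the equivariance \eqref{eqivariance}.

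First I would record measurability. Since $\Omega_\alpha$ is a compact metric space and $\pi$ is continuous, $\Omega_\alpha^\circ$ is Polish and $\pi(A)$ is an analytic subset of $\R$ for every Borel $A\subseteq\Omega_\alpha^\circ$, hence Lebesgue measurable; this is precisely the role of \cite[2.2.13]{FED}. In particular $\mu_\alpha(\emptyset)=0$ and $\mu_\alpha$ is monotone.

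The main obstacle is countable additivity, which is where the fiber structure is used. From \eqref{irrational_cantor} the fiber $\pi^{-1}(x)\cap\Omega_\alpha^\circ$ equals the singleton $\{\omega_x\}$ when $x\notin\n{X}_\alpha$, and the two-point set $\{\omega_x,\sigma_n^*(\omega_\bullet)\}$ when $x=x_n^\alpha\in\n{X}_\alpha$ (two points precisely because $n\mapsto x_n^\alpha$ is injective for $\alpha$ irrational). Hence the set of $x$ with $|\pi^{-1}(x)\cap\Omega_\alpha^\circ|\geq 2$ is contained in $\n{X}_\alpha$, so for disjoint Borel $A,B\subseteq\Omega_\alpha^\circ$ one gets $\pi(A)\cap\pi(B)\subseteq\n{X}_\alpha$, a $\lambda$-null set. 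For a Borel partition $A=\bigsqcup_j A_j$ the sets $\pi(A_j)\setminus\n{X}_\alpha$ are then pairwise disjoint with $\lambda(\pi(A_j)\setminus\n{X}_\alpha)=\lambda(\pi(A_j))$, so countable additivity of $\lambda$ gives
\[
\mu_\alpha(A)=\lambda\Big(\bigcup_j\pi(A_j)\Big)=\lambda\Big(\bigcup_j\big(\pi(A_j)\setminus\n{X}_\alpha\big)\Big)=\sum_j\lambda(\pi(A_j))=\sum_j\mu_\alpha(A_j).
\]
I expect this bookkeeping to be the only genuinely delicate point; once it is settled $\mu_\alpha$ is a bona fide Borel measure.

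Invariance will then be immediate from \eqref{eqivariance}: the homeomorphism $\sigma_n^*$ fixes $\omega_\pm$, hence preserves $\Omega_\alpha^\circ$, and $\pi(\sigma_n^*(A))=\pi(A)+x_n^\alpha$, so translation invariance of $\lambda$ yields $\mu_\alpha(\sigma_n^*(A))=\lambda(\pi(A)+x_n^\alpha)=\mu_\alpha(A)$. For regularity I would observe that $\Omega_\alpha^\circ$, being open in the compact metric space $\Omega_\alpha$, is a second-countable, $\sigma$-compact, locally compact Hausdorff space, and that $\mu_\alpha$ is finite on compacts: if $K\subseteq\Omega_\alpha^\circ$ is compact then $\pi(K)$ is compact, hence bounded, so $\mu_\alpha(K)=\lambda(\pi(K))<\infty$. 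A positive Borel measure that is finite on compact sets on such a space is automatically regular, which completes the argument.
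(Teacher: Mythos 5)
Your proof is correct and follows essentially the same route as the paper's: both define $\mu_\alpha=\lambda\circ\pi$ and exploit the fact that the failure of injectivity of $\pi$ on $\Omega_\alpha^\circ$ is confined to the countable, hence $\lambda$-null, set $\n{X}_\alpha$, with measurability of $\pi(A)$ supplied by \cite[2.2.13]{FED} and invariance by the equivariance \eqref{eqivariance}. Your handling of countable additivity (disjointifying the sets $\pi(A_j)$ modulo $\n{X}_\alpha$ in a single step) is slightly more direct than the paper's (finite additivity followed by continuity along an increasing family), and you additionally verify regularity via finiteness on compacts, a point the paper leaves implicit.
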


\proof
  One has that
\begin{equation}
\mu_\alpha(\emptyset)\;=\;\lambda(\pi(\emptyset))\;=\;\lambda(\emptyset)\;=\;0\;. 
\end{equation}
From the definition of $\pi$, it follows
$$
\n{X}_\alpha\;=\;\left.\big\{ x\in \R\;\right|\;\exists\;\omega_1,\omega_2\in \Omega_\alpha^\circ, \;\omega_1\neq \omega_2\; {\rm and}\; \pi(\omega_1)=\pi(\omega_2)=x\big\}$$ 
and moreover, $\lambda(\n{X}_\alpha)=0$ since this set is countable. Now let $A$ and $B$ two disjoint Borel sets in $\Omega_\alpha^\circ$.  Notice that $\pi(A)$ and $\pi(B)$ may not be disjoint but $$\mu_\alpha(A\cap B)\;=\;\lambda\big(\pi(A\cap B)\big)\;\leq \;\lambda(\n{X}_\alpha)\;=\;0.$$  The latter leads to $   \mu_\alpha(A\cup B)=\mu_\alpha(A)+\mu_\alpha(B).$ To verify the $\sigma$-additive, suppose $\{ A_i\}$ is a non-decreasing monotone family of Borel sets in $\Omega_\alpha^\circ.$ Then $\{ \pi(A_i)\}$ is a non-decreasing monotone family of Borel sets in $\R$. So one obtains
\begin{equation*}
    \begin{split}
        \mu_\alpha\big(\bigcup_i A_i\big)\;&=\;\lambda\left(\pi\big(\bigcup_i  A_i\big)\right)\;=\;\lambda\left(\bigcup_i\pi(  A_i)\right)\;=\;\lim_i\lambda\left(\pi(A_i)\right)\\&=\;\lim_i\mu_\alpha(A_i)
    \end{split}
\end{equation*}
In conclusion, $\mu_\alpha$ is a Borel measure on $\Omega_\alpha^\circ$ which is $\sigma^*$-invariant by Proposition \ref{prop. surjective}.
\qed

\medskip

Assume now that $\nu$ is another invariant measure on $\Omega_\alpha^\circ$. Then from the arguments used in the proof of Proposition \ref{Teo. measures}, the pushforward measure on $\R$ induced by $\pi$ is $k\lambda$ for some $k>0.$ As a conclusion, $\nu=k\mu_\alpha.$  In other words we showed that $\mu_\alpha$ is unique up to a multiplication constant. This ambiguity can be fixed by a suitable normalization.
\begin{remark}[Normalization]\label{rk:norm}
   In order to fix the normalization for the invariant measure we define $\mu_\alpha$ in such a way that $\mu_\alpha(I_1)=1$ where $I_1:=\pi^{-1}([0,1])$ \hfill $\blacktriangleleft$
\end{remark}

\medskip

Summing up one lands on the following result.
\begin{proposition}\label{prop: invariant measure}
    There is a unique regular Borel invariant measure $\mu_\alpha$ on $\Omega_\alpha$ supported in $\Omega_\alpha^\circ$ with normalization given as in Remark \ref{rk:norm}.
\end{proposition}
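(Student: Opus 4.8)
The plan is to assemble the statement from the case analysis already carried out, handling the rational and irrational regimes separately and then recording the support and the normalization. For the rational case $\alpha=p/q$, Proposition~\ref{teo_disc_rat} identifies the interface hull $\Omega_\alpha^\circ=\Orb(\omega_\bullet)$ equivariantly with the discrete group $\n{X}_\alpha$, on which $\Z^2$ acts transitively by translations. Existence is then immediate, since the scaled counting measure $c_\alpha\,\sharp$ of \eqref{eq:def_c} is visibly translation invariant. Uniqueness up to a scale is the discrete analogue of the uniqueness of Haar measure: transitivity forces any invariant Borel measure to assign one and the same mass to every point of the single orbit $\n{X}_\alpha$, hence to be a nonnegative multiple of $\sharp$. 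Such a measure cannot charge $\omega_\pm$, since these are exactly the fixed points carrying the two distinct ergodic measures $\mathbb{P}_\pm$ of Proposition~\ref{Teo. measures}; this is what forces the support into $\Omega_\alpha^\circ$.

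For $\alpha$ irrational the two analytic ingredients are already in hand. The preceding Lemma shows that $\mu_\alpha(A):=\lambda(\pi(A))$ is a well-defined, $\sigma^*$-invariant, regular Borel measure on $\Omega_\alpha^\circ$, which settles existence; here $\lambda$ is Lebesgue measure and $\pi$ the map of Proposition~\ref{prop. surjective}. For uniqueness up to a scale I would reuse the mechanism of Proposition~\ref{Teo. measures}: given any invariant measure $\nu$ on $\Omega_\alpha^\circ$, its pushforward $\pi_*\nu$ is a locally finite Borel measure on $\R$ invariant under the dense subgroup $\n{X}_\alpha$, hence a multiple $k\lambda$ of Lebesgue measure by density of $\n{X}_\alpha$ and the essential uniqueness of Haar measure on $\R$. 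It then remains to transport this identity back through $\pi$ to conclude $\nu=k\mu_\alpha$.

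I expect this last transport to be the delicate point, because $\pi$ is not injective: by the computation in the Lemma it is two-to-one exactly over the countable set $\n{X}_\alpha$, the fibre over $x_n^\alpha$ consisting of $\omega_{x_n^\alpha}\in\Orb(\omega_0)$ and $\sigma_n^*(\omega_\bullet)\in\Orb(\omega_\bullet)$, and injective elsewhere. The argument therefore needs the observation that no invariant $\nu$ can charge the countable set $\pi^{-1}(\n{X}_\alpha)=\Orb(\omega_0)\cup\Orb(\omega_\bullet)$: an atom there would, by invariance and transitivity along the relevant orbit, produce atoms in $\pi_*\nu$ at every point of $\n{X}_\alpha$, contradicting $\pi_*\nu=k\lambda$. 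Consequently $\pi$ restricts to a Borel isomorphism between $\Omega_\alpha^\circ\setminus\pi^{-1}(\n{X}_\alpha)$ and $\R\setminus\n{X}_\alpha$, and since $\n{X}_\alpha$ is $\lambda$-null this upgrades $\pi_*\nu=k\lambda$ to the identity $\nu=k\mu_\alpha$.

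Finally I would collect the pieces. In both regimes $\mu_\alpha$ is an infinite but locally finite invariant Radon measure whose support is contained in the open invariant set $\Omega_\alpha^\circ$, and the sole remaining freedom is the positive constant $k$. This is eliminated by imposing the normalization of Remark~\ref{rk:norm}, namely $\mu_\alpha(\pi^{-1}([0,1]))=1$, which is admissible precisely because $\pi^{-1}([0,1])$ has finite positive $\mu_\alpha$-measure in either case; this pins down $\mu_\alpha$ uniquely and yields the asserted statement.
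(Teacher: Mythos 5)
Your proof is correct and follows essentially the same route as the paper: existence via the explicit measures (scaled counting measure on $\n{X}_\alpha$ for rational $\alpha$, and $\lambda\circ\pi$ via the Lemma for irrational $\alpha$), uniqueness by pushing an arbitrary invariant $\nu$ forward to $\R$ and invoking uniqueness of Haar measure under the dense subgroup $\n{X}_\alpha$, and the normalization of Remark \ref{rk:norm} to fix the scale. Your explicit treatment of the transport-back step --- noting that $\pi^{-1}(\n{X}_\alpha)$ is $\nu$-null because $\pi_*\nu=k\lambda$ charges no countable set, so that $\pi$ is a Borel isomorphism off a null set --- supplies a detail the paper compresses into ``As a conclusion, $\nu=k\mu_\alpha$,'' and is a welcome clarification rather than a deviation.
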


\medskip

We will call the ergodic measures $\mathbb{P}_\pm$  the \emph{bulk measures} of $\Omega_\alpha$ and $\mu_\alpha$  the \emph{interface measure.} In Section \ref{section interface and bulk} we will see that these measures provide unique $\Z^2$-invariant traces on the bulk algebra and the interface algebra.

\subsection{Bulk and interface algebras}\label{section interface and bulk}
Following \cite{Deni1}, when one considers a full system with asymptotic  constant magnetic fields $b_+$ and $b_-$, it results appropriate   to define the \emph{bulk algebra} as
$$\A_{{\rm bulk}}\;:=\;\A_{b_{-}}\oplus\A_{b_+},$$
where $\A_{b_{+}}$ and $\A_{b_-}$ are the magnetic $C^*$-algebras associated to the constant magnetic fields of strength $b_{+}$ and $b_-$, respectively \cite[Example 2.10]{Deni1}. In other words, the bulk algebra contains asymptotic information about the system. 
\medskip

Since the sets $\{\omega_+\}$ and $\{\omega_-\}$ are closed  invariants subset of $\Omega_\alpha$, then \cite[Proposition 3.11]{Deni1} shows that the map ${\rm ev}\colon \A_{\alpha}\rightarrow\A_{{\rm bulk}}$ defined as

\begin{equation}\label{sec 3.1}
\begin{split}
     {\rm ev}(\mathfrak{s}_1)\;:&=\;(\mathfrak{s}_{b_{-},1},\,\mathfrak{s}_{b_+,1})\\
       {\rm  ev}(\mathfrak{s}_2)\;:&=\;(\mathfrak{s}_{b_{-},2},\,\mathfrak{s}_{b_+,2})\\
        {\rm ev}(\mathfrak{f}_\alpha)\;:&=\;\big(\,\expo{\ii b_{-}}\mathfrak{1},\,\expo{\ii b_+}\mathfrak{1}\big)
\end{split}
\end{equation}
is an evaluation homomorphism according to \cite[Definition 3.1]{Deni1}. We denote by $\mathfrak{I}_{\alpha}$  the kernel of ${\rm ev}$ and we call this ideal the \emph{interface algebra}. By definition, one lands in the following exact sequence
\begin{equation}\label{seq a}
    \xymatrix{
 0\ar[r]&\mathfrak{I}_{\alpha}\ar[r]^{i} & \A_{\alpha} \ar[r]^{{\rm ev}}& \A_{{\rm bulk}}\ar[r]&0\;. }
\end{equation}

\medskip
Proposition \ref{prop: cros} gives a description of $\mathfrak{I}_\alpha$ in terms of crossed product:
\begin{equation}\label{eq: interface}
\mathfrak{I}_\alpha\;\simeq \;\big(C_0(\Omega_\alpha^\circ)\rtimes_{\sigma_1}\Z\big)\rtimes_{\sigma_2}\Z\;\simeq\;C_0(\Omega_\alpha^\circ)\rtimes_{\sigma, \theta_\alpha}\Z^2
\end{equation}
 In agreement with \cite[Proposition 2.4]{Dani}, given $n\in \Z^2$ we introduce the following elements in $\A_\alpha$
\begin{equation}\label{eq: projections}
    \begin{split}
        \mathfrak{r}_{n}\;&:=\;{\rm sign}(\alpha)(\expo{\ii b_+}-\expo{\ii b_{-}})^{-1}\big(\sigma_{(n+e_1)}(\mathfrak{f}_\alpha)-\sigma_{n}(\mathfrak{f}_\alpha)\big),\\
        \mathfrak{l}_{n}\;&:=\;(\expo{\ii b_-}-\expo{\ii b_{+}})^{-1}\big(\sigma_{(n+e_2)}(\mathfrak{f}_\alpha)-\sigma_{n}(\mathfrak{f}_\alpha)\big),\\
\mathfrak{q}_{n}\;&:=\;(\expo{\ii b_+}-\expo{\ii b_{-}})^{-1}\big(\sigma_{n}(\mathfrak{f}_\alpha)-\expo{\ii b_{-}}\mathfrak{1}\big),\\
\mathfrak{q}_{n}^{\bot}\;&:=\;\mathfrak{1}-\mathfrak{q}_n^\alpha\;.
    \end{split}
\end{equation}
Here ${\rm sign}(\cdot)$ is the standard sign function. It is not difficult to see that all these elements are projections in $\mathfrak{F}_\alpha$ and the following relations hold
$${\rm sign}(\alpha)\big(\mathfrak{q}_{n+e_1}-\mathfrak{q}_{n}\big)\;=\;\mathfrak{r}_n\;,\qquad \mathfrak{q}_{n}-\mathfrak{q}_{n+e_2}\;=\;\mathfrak{l}_n\;.
$$
For any point $(q,p)\in\Z^2$ let
\[
\Z_{q,p}\;:=\;\big\{ m\cdot(q,p):=(mq,mp)\;|\; m\in \Z\big\}
\]
be the subgroup of $\Z^2$ of the all  multiples of $(q,p)$. 

\medskip

From this part on, let us identify $\I_\alpha$ with its crossed product structure, in terms of $\Omega_\alpha^\circ$, given in \eqref{eq: interface}. Consider the dense $*$-subalgebra  of $\I_\alpha$  defined by
\begin{equation}
    \mathfrak{I}_\alpha^0\;:=\;\left.\left\{ \sum_{n\in \Lambda} \mathfrak{a}_{n}\mathfrak{s}^{n}\;\right|\; \mathfrak{a}_{n}\in C_c(\Omega_\alpha^\circ)\;,\;\;\; \Lambda\in \bb{P}_f(\Z^2)\right\}
\end{equation}
where $\mathfrak{s}^{n}:=\mathfrak{s}_1^{n_1}\mathfrak{s}_2^{n_2}$
for any $n=(n_1,n_2)\in\Z^2$ and $\bb{P}_f(\Z^2)$ is the family  subsets of $\Z^2$ with  finite cardinality.

\begin{theorem}\label{teo: interface}
 Let $\mathfrak{K}$ be the $C^*$algebra of compact operators. For $\alpha$ rational one has
    \begin{equation}\label{interfae rational}
        \I_\alpha\;\simeq\;C(\T)\otimes \mathfrak{K}\;,
    \end{equation}
while for $\alpha$ irrational
 \begin{equation}\label{eq: interface projected}
\I_\alpha\;\simeq\;\big(\mathfrak{l}_0\I_\alpha\mathfrak{l}_0\big)\otimes \mathfrak{K}\;.
    \end{equation}
\end{theorem}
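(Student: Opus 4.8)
The plan is to realize $\I_\alpha$ as the stabilization of a corner cut out by a full projection, handling both cases by the same mechanism but with different building blocks. The common tool is the standard fact that if a $C^*$-algebra $A$ carries a system of matrix units $\{e_{jk}\}_{j,k\in J}$ in its multiplier algebra $M(A)$, indexed by a countable set $J$, with $\sum_{j\in J}e_{jj}=\mathfrak{1}$ converging strictly, then $A\simeq\big(e_{j_0j_0}Ae_{j_0j_0}\big)\otimes\mathfrak{K}(\ell^2(J))$ for any fixed $j_0$, via $d\otimes\epsilon_{jk}\mapsto e_{jj_0}\,d\,e_{j_0k}$. Throughout I use that $\mathfrak{s}_1,\mathfrak{s}_2$ are unitaries in $M(\I_\alpha)$ (they multiply the ideal $\I_\alpha$ of $\A_\alpha$ into itself), that conjugation by $\mathfrak{s}^n$ implements $\sigma_n$ by \eqref{equ 2.6}, and that the projections in \eqref{eq: projections} lie in $C_c(\Omega_\alpha^\circ)\subseteq\I_\alpha$: they are determined by the values of $\omega$ at finitely many sites, hence are indicators of clopen subsets of $\Omega_\alpha$, and they vanish at the constant configurations $\omega_\pm$.

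For the irrational case I would use $\mathfrak{l}_0$. First note that the $e_2$-translates $\mathfrak{l}_{(0,k)}:=\sigma_{ke_2}(\mathfrak{l}_0)=\mathfrak{s}_2^{k}\mathfrak{l}_0\mathfrak{s}_2^{-k}$ are mutually orthogonal: by the description in Proposition \ref{teo cantor}, $\supp\mathfrak{l}_{(0,k)}$ consists of those interface configurations whose unique sign change along the column $\{(0,j)\}_{j\in\Z}$ occurs between heights $k$ and $k+1$, and for distinct $k$ these conditions are incompatible. Since every $\omega\in\Omega_\alpha^\circ$ is a non-constant Iwatsuka configuration, it has exactly one such change in that column, so the supports partition $\Omega_\alpha^\circ$ and $\sum_k\mathfrak{l}_{(0,k)}=\mathfrak{1}$; strict convergence holds because any $g\mathfrak{s}^n\in\I_\alpha^0$ has $g$ supported on a set whose $\pi$-image is compact in $\R$, hence is annihilated by all but finitely many $\mathfrak{l}_{(0,k)}$ (whose supports map into the tiles $[k,k+1)$). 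Setting $v_k:=\mathfrak{s}_2^{k}\mathfrak{l}_0$ gives $v_k^*v_k=\mathfrak{l}_0$ and $v_kv_k^*=\mathfrak{l}_{(0,k)}$, so that $e_{jk}:=v_jv_k^*$ form matrix units with $\sum_k e_{kk}=\mathfrak{1}$ and $e_{00}=\mathfrak{l}_0$. The stabilization fact then yields $\I_\alpha\simeq(\mathfrak{l}_0\I_\alpha\mathfrak{l}_0)\otimes\mathfrak{K}$.

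For the rational case the hull $\Omega_\alpha^\circ=\Orb(\omega_\bullet)$ is discrete (Proposition \ref{teo_disc_rat}), so the indicator $p:=\chi_{\{\omega_\bullet\}}$ of a single point is already a projection in $C_c(\Omega_\alpha^\circ)\subseteq\I_\alpha$, and I would use $p$ in place of $\mathfrak{l}_0$. Because the action is transitive on $\Orb(\omega_\bullet)\simeq\n{X}_\alpha$, for each $\xi=x_n^\alpha\in\n{X}_\alpha$ I fix $n$ with $\sigma_n^*(\omega_\bullet)$ labelled by $\xi$ and set $w_\xi:=\mathfrak{s}^{n}p$, giving $w_\xi^*w_\xi=p$ and $w_\xi w_\xi^*=\sigma_n(p)=\chi_{\{\sigma_n^*(\omega_\bullet)\}}$; orthogonality and $\sum_\xi\chi_{\{\sigma_n^*(\omega_\bullet)\}}=\mathfrak{1}$ are immediate from discreteness, producing matrix units indexed by $\n{X}_\alpha\simeq\Z$. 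It remains to identify the corner $p\I_\alpha p$. Here $p\,\mathfrak{a}\mathfrak{s}^n\,p$ is nonzero only when $\sigma_n^*(\omega_\bullet)=\omega_\bullet$, i.e. when $n$ lies in the stabilizer $\Z_{q,p}$, which is infinite cyclic; hence $p\I_\alpha p$ is the twisted group $C^*$-algebra of $\Z_{q,p}$ with the cocycle obtained by evaluating $\theta_\alpha$ at $\omega_\bullet$. Since this restricted cocycle is $\T$-valued on $\Z_{q,p}\simeq\Z$ and $H^2(\Z,\T)=0$, it is a coboundary, so $p\I_\alpha p\simeq C^*(\Z)\simeq C(\T)$, and the stabilization fact gives $\I_\alpha\simeq C(\T)\otimes\mathfrak{K}$.

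The step I expect to be the main obstacle is the verification underlying the resolution of identity: that the translates of the chosen projection genuinely tile $\Omega_\alpha^\circ$ and that $\sum e_{jj}=\mathfrak{1}$ converges strictly in $M(\I_\alpha)$. This is where the explicit combinatorics of the Iwatsuka configurations from Propositions \ref{teo_disc_rat} and \ref{teo cantor} is really used, together with the compact-support description of the dense subalgebra $\I_\alpha^0$. A secondary technical point is the corner computation in the rational case: one must check carefully that restricting the $\mathfrak{F}_\alpha$-valued cocycle $\theta_\alpha$ to the stabilizer and evaluating at $\omega_\bullet$ yields an honest scalar $2$-cocycle, so that triviality of $H^2(\Z,\T)$ applies. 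By contrast, the irrational corner $\mathfrak{l}_0\I_\alpha\mathfrak{l}_0$ admits no comparably simple description, which is precisely why the conclusion is stated only in stabilized form in that case.
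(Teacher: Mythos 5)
Your argument is correct. For the irrational case it is essentially the paper's own proof in different clothing: the paper conjugates by the isometry $W\psi=(\mathfrak{l}_0\mathfrak{s}_2^{-j}\psi)_{j\in\Z}$, whose matrix entries $\mathfrak{l}_0\mathfrak{s}_2^{-i}\mathfrak{a}\,\mathfrak{s}_2^{j}\mathfrak{l}_0$ are exactly your $v_i^*\mathfrak{a}v_j$ with $v_j=\mathfrak{s}_2^{j}\mathfrak{l}_0$; your explicit check that the translates $\mathfrak{l}_{(0,k)}$ tile $\Omega_\alpha^\circ$ and that the sum converges strictly is precisely the (unstated) reason the paper's $W$ is unitary, so you have in fact supplied a detail the paper glosses over. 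For the rational case your route is genuinely different: the paper first decomposes $\I_\alpha\simeq\big(C_0(\Omega_\alpha^\circ)\rtimes\Z_{q,p}\big)\rtimes\Z_{-p,q}$ via the Packer--Raeburn theorem and then applies a Bloch--Floquet transform using the $\Z_{q,p}$-invariance of $\mathfrak{f}_\alpha$, whereas you compress by the minimal projection $p=\chi_{\{\omega_\bullet\}}$ of the discrete hull, identify the corner $p\I_\alpha p$ with the twisted group $C^*$-algebra of the stabilizer $\Z_{q,p}$, and remove the twist using $H^2(\Z,\T)=0$. Your version is more elementary, treats both cases by a single mechanism, and produces the corner generator $p\,\mathfrak{s}^{(q,p)}$ directly, which matches the unitary $\mathfrak{w}'_\alpha$ of \eqref{unitario interface} used later; the paper's version has the advantage of exhibiting the intermediate crossed product and the Bloch--Floquet fibration, which are reused in the $K$-theoretic computations of Propositions \ref{teo: k-groups rational} and \ref{prop: k interface racional}. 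One small inaccuracy: your blanket claim that the projections of \eqref{eq: projections} lie in $C_c(\Omega_\alpha^\circ)$ fails for $\mathfrak{q}_n$ and $\mathfrak{q}_n^{\bot}$, which do not vanish at $\omega_+$, respectively $\omega_-$; this is harmless since you only use $\mathfrak{l}_0$ and $\chi_{\{\omega_\bullet\}}$, both of which do belong to $C_c(\Omega_\alpha^\circ)$.
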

\begin{proof}  Let us start with  $\alpha=p/q$ with $p,q\in \Z$ relatively primes. One has the following isomorphisms
  \begin{equation*}
  \begin{split}
       \I_\alpha\;&\simeq \; \big(C_0(\Omega_\alpha^\circ)\rtimes_{\sigma,\theta_\alpha}\Z_{q,p}\big)\rtimes_{\sigma\circ c, \theta'_\alpha}\Z^2/\Z_{q,p}\\
       &\simeq \; \big(C_0(\Omega_\alpha^\circ)\rtimes_{\sigma,\theta_\alpha}\Z_{q,p}\big)\rtimes_{\sigma, \theta_\alpha}\Z_{-p,q}\;.
  \end{split}
\end{equation*}
In the first line we used \cite[Theorem 4.1]{PR}, where $c\colon \Z^2/\Z_{q,p}\to \Z^2$ is a Borel section and $\theta'_\alpha$ is the induced $2$-cocycle in the quotient by $\theta_\alpha$. The second line is a consequence of the group isomorphism $\Z^2/\Z_{q,p}\simeq \Z_{-p,q}$ induced by the surjective homomorphism 
$$\Z^2\ni (n_1,n_2)\;\mapsto\; (-pn_1+qn_2)\cdot (-p,q)
$$ 
Now observe that $\mathfrak{f}_\alpha$ is invariant under the action of the subgroup $\Z_{q,p}\subset \Z^2$, so one can proceed with the Bloch-Floquet transform as in \cite[Proposition 4.7]{Deni1} and arrive in \eqref{interfae rational}.  The case $\alpha=\pm\infty$ is just the same as in \cite[Proposition 4.7]{Deni1}.
Coming back to the irrational case let $W\colon \ell^2(\Z^2)\to \ell^2\big(\Z,\ell^2(\Z^2)\big)\simeq \ell^2(\Z)\otimes \ell^2(\Z^2)$ be the unitary operator acting via
\begin{equation}
W\psi\;:=\;\big(\mathfrak{l}_{0}\mathfrak{s}_2^{-j}\psi\big)_{j\in \Z}\;.
\end{equation}
Here $\mathfrak{l}_n$ is the projection defined in \eqref{eq: projections}. A simple computation verifies that for any operator $\mathfrak{a}$ in $\ell^2(\Z^2)$ it is true that
\begin{equation}\label{eq: matrix}
W\mathfrak{a}W^*\;=\;\big(\mathfrak{l}_0 \mathfrak{s}_2^{-i}\mathfrak{a}\mathfrak{s}_2^{j}\mathfrak{l}_0\big)_{i,j}\;.   
\end{equation}
We claim that 
$$W\mathfrak{I}_\alpha W^*\;=\;\big(\mathfrak{l}_0\mathfrak{I}_\alpha\mathfrak{l}_0\big)\otimes \mathfrak{K}(\ell^2(\Z))\;.$$
Indeed, let $\mathfrak{a}\in \mathfrak{I}_\alpha$ be the image of the element $\hat{\mathfrak{a}}=\sum_{n\in \Z^2}\mathfrak{a}_n\mathfrak{s}^{n}$ under the isomorphism \eqref{eq: interface}, where $\mathfrak{a}_n\in C_c(\Omega_\alpha^\circ)$ and the sum has a finite number of non-zero elements. Then the matrix associated with $W\mathfrak{a}W^*$ in the r.h.s. in equation \eqref{eq: matrix} has only a finite number of nonzero entries. Thus, $W\mathfrak{a}W^*\in \big(\mathfrak{l}_0\mathfrak{I}_\alpha\mathfrak{l}_0\big)\otimes \mathfrak{K}(\ell^2(\Z)$ and by the density of $\mathfrak{I}_\alpha^0$ in $\mathfrak{I}_\alpha$ one gets the first containing. For the reverse inclusion, consider a matrix $\big(\mathfrak{a}_{ij}\big)_{i,j}$ with a finite number of nonzero entries in $\mathfrak{l}_0\mathfrak{I}_\alpha \mathfrak{l}_0$. Then if one defines $\mathfrak{a}=\sum_{i,j}\mathfrak{s}_2^i\mathfrak{a}_{ij}\mathfrak{s}_2^{-j}$  it turns out that $\mathfrak{a}\in \mathfrak{I}_\alpha$ and $W\mathfrak{a}W^*=\big(\mathfrak{a}_{ij}\big)_{i,j}$. Then the claim follows and this shows \eqref{eq: interface projected}.
\end{proof}
For any $\mathfrak{a}\in \mathfrak{I}_\alpha^0$ we shall consider the linear functional
\begin{equation}
    \mathcal{T}_\alpha(\mathfrak{a})\;=\;\int_{\Omega_\alpha^\circ} \dd\mu_\alpha(\omega) \mathfrak{a}_{(0,0)}(\omega)
\end{equation}
where  $\mu_\alpha$ is the interface measure on $\Omega_\alpha^\circ$ described in Proposition \ref{prop: invariant measure}, and $\mathfrak{a}_{(0,0)}$ is the image of $\mathfrak{a}$ under the conditional expectation $E\colon \mathfrak{I}_\alpha\to C_0(\Omega_\alpha^\circ)$. From the properties of $\mu_\alpha$ it follows that $\mathcal{T}_\alpha$ is a faithful normal semi-finite (f.n.s.) and lower semi-continuous $\Z^2$-invariant trace on $\I_\alpha$. We shall denote the dense domain of this trace as $L^1(\mathfrak{I}_\alpha)$, \ie the closure of $\I_\alpha^0$ under the norm
\begin{equation}
    \|\mathfrak{a}\|_1\;:=\;\mathcal{T}_\alpha(|\mathfrak{a}|)\;.
\end{equation}
 By following some arguments in the proof of \cite[Theorem 1.4]{RX} we will show the uniqueness of this trace.
\begin{proposition}\label{prop: interface trace}
    $\mathcal{T}_\alpha$ is the unique $\Z^2$-invariant trace on $\I_\alpha$, up to a positive constant.
\end{proposition}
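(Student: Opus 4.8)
The plan is to establish uniqueness by reducing the statement to the uniqueness of the invariant trace on the commutative core $C_0(\Omega_\alpha^\circ)$, which has already been settled by the uniqueness of the interface measure $\mu_\alpha$ (Proposition \ref{prop: invariant measure}). The strategy splits according to the structure results of Theorem \ref{teo: interface}. In the rational case, where $\I_\alpha\simeq C(\T)\otimes\mathfrak{K}$, any densely-defined semi-finite trace is determined by its restriction to the algebra of compacts tensored with a scalar: since $\mathfrak{K}$ carries a unique (up to scaling) semi-finite trace and $C(\T)$ contributes the integration against the unique normalized Haar/Lebesgue data encoded in $\mu_\alpha$, the trace is rigid. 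For the irrational case, where $\I_\alpha\simeq(\mathfrak{l}_0\I_\alpha\mathfrak{l}_0)\otimes\mathfrak{K}$, I would again use that the $\mathfrak{K}$-factor fixes the trace up to a constant on the corner $\mathfrak{l}_0\I_\alpha\mathfrak{l}_0$, so it suffices to prove uniqueness of the trace on this corner.

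The main technical step I would carry out is to show that any $\Z^2$-invariant trace $\tau$ on $\I_\alpha$ must restrict, via the conditional expectation $E\colon\I_\alpha\to C_0(\Omega_\alpha^\circ)$, to an invariant positive functional on $C_0(\Omega_\alpha^\circ)$. The key fact here is that the trace property forces $\tau$ to annihilate the off-diagonal part of any element in $\I_\alpha^0$: for a monomial $\mathfrak{a}_n\mathfrak{s}^n$ with $n\neq 0$ one has, using the commutation relations \eqref{equ 2.4} and invariance, that $\tau(\mathfrak{a}_n\mathfrak{s}^n)=0$, so that $\tau(\mathfrak{a})=\tau(E(\mathfrak{a}))$ for all $\mathfrak{a}\in\I_\alpha^0$. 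This is the standard mechanism by which a trace on a crossed product collapses onto the fixed-point data of the base. I would make this explicit by twisting: conjugating $\mathfrak{a}_n\mathfrak{s}^n$ by a suitable unitary $\mathfrak{v}_\theta$ from \eqref{eq: action} multiplies it by a nontrivial character $\expo{-\ii\theta\cdot n}$, and the invariance of $\tau$ under these inner automorphisms (combined with $\eta_\theta$ being inner, hence trace-preserving) forces the vanishing of $\tau$ on the off-diagonal.

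Once $\tau=\nu\circ E$ for some positive functional $\nu$ on $C_0(\Omega_\alpha^\circ)$, the trace property together with the $\Z^2$-equivariance of $E$ forces $\nu$ to be $\sigma^*$-invariant: for $g\in C_0(\Omega_\alpha^\circ)$ the identity $\tau(\mathfrak{s}^n\mathfrak{m}_g\mathfrak{s}^{-n})=\tau(\mathfrak{m}_g)$ translates into $\nu(\sigma_n^*(g))=\nu(g)$. At this point I can invoke Proposition \ref{prop: invariant measure}, which asserts that the only regular Borel invariant measure on $\Omega_\alpha^\circ$ supported there is $\mu_\alpha$, up to a positive scaling factor. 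Hence $\nu=k\mu_\alpha$ for some $k>0$, and therefore $\tau=k\,\mathcal{T}_\alpha$, which is exactly the claimed uniqueness.

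The step I expect to be the main obstacle is the rigorous control of the off-diagonal vanishing and of the domain issues in the irrational case, where $\I_\alpha$ is not of the clean form $C(\T)\otimes\mathfrak{K}$ but rather a corner tensored with $\mathfrak{K}$. The subtlety is that $\mathcal{T}_\alpha$ is only a \emph{semi-finite} trace with dense domain $L^1(\I_\alpha)$, not a finite one, so I must argue on the trace-class ideal $\I_\alpha^0$ and justify that the reduction $\tau=\nu\circ E$ persists after closure, and that $\nu$ is a genuine regular Borel measure (rather than a merely finitely additive functional) so that Proposition \ref{prop: invariant measure} applies. Handling the lower semicontinuity and the passage from the dense $*$-subalgebra $\I_\alpha^0$ to the full trace, while keeping track of the normalization fixed in Remark \ref{rk:norm}, is where the careful work lies; this is precisely the point where I would lean on the arguments of \cite[Theorem 1.4]{RX} cited by the authors.
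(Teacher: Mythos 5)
Your proposal has the same skeleton as the paper's proof: show that an invariant trace annihilates the off-diagonal Fourier modes $\mathfrak{a}\,\mathfrak{s}^{n}$, $n\neq(0,0)$, so that it factors through the conditional expectation $E$; then apply the Riesz representation theorem and the uniqueness of the invariant measure (Proposition \ref{prop: invariant measure}). The gap is in your justification of the off-diagonal vanishing. You derive it from the claim that the gauge automorphisms $\eta_\theta$ are inner and hence automatically trace-preserving. They are not inner: $\mathfrak{v}_\theta$ is multiplication by $n\mapsto\expo{-\ii\theta\cdot n}$, a function which (for generic $\theta$) does not extend continuously to the magnetic hull, so $\mathfrak{v}_\theta$ is neither an element of $\A_\alpha$ nor a multiplier of $\I_\alpha$; for instance $\mathfrak{v}_\theta\mathfrak{l}_0$ is a multiplication operator oscillating along the interface and hence does not lie in $C_0(\Omega_\alpha^\circ)$, so it cannot lie in $\I_\alpha$. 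The family $\eta_\theta$ is the dual (gauge) action on the crossed product: it is spatially implemented (this is what the paper means by \virg{internal}), but it is outer, and a general trace need not be invariant under an outer automorphism. If your argument were valid it would prove that \emph{every} trace on $\I_\alpha$ factors through $E$, which is false: in the rational case $\I_\alpha\simeq C(\T)\otimes\mathfrak{K}$ carries the traces $\nu\otimes\Tr$ for an arbitrary measure $\nu$ on $\T$, and for $\nu$ different from the Haar measure these do not annihilate the Fourier mode in the direction $(q,p)$ parallel to the interface.

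The paper obtains the vanishing by a purely tracial argument that you are missing: using $\mathfrak{r}_n\mathfrak{s}_1=\mathfrak{s}_1\mathfrak{r}_{n-e_1}$ and cyclicity one gets $\mathcal{T}'(\mathfrak{r}_n\mathfrak{s}_1)=\mathcal{T}'\big(\mathfrak{s}_1\mathfrak{r}_{n-ke_1}\cdots\mathfrak{r}_{n-e_1}\mathfrak{r}_n\big)$ for every $k$, and the product of translated projections eventually vanishes because their supports are strips of bounded width transverse to the interface which become pairwise disjoint; the same mechanism handles $\mathfrak{l}_n$ and then general $\mathfrak{a}\in C_c(\Omega_\alpha^\circ)$. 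This is where the specific geometry of the projections \eqref{eq: projections} enters, and it is the actual content of the proof. Your preliminary reduction via Theorem \ref{teo: interface} is not load-bearing either: in the rational case the assertion that $C(\T)$ \virg{contributes the unique Haar data} is precisely what must be proved, and in the irrational case passing to the corner $\mathfrak{l}_0\I_\alpha\mathfrak{l}_0$ merely relocates the problem. Your final steps (Riesz representation, invariance of the resulting measure, appeal to Proposition \ref{prop: invariant measure}) do coincide with the paper's.
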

\begin{proof}
  Let $\mathcal{T}'$ be any $\Z^2$-invariant trace. For $n\in \Z^2$ one has
\begin{equation*}
    \begin{split}
\mathcal{T}'\big(\mathfrak{r}_n\mathfrak{s}_1\big)\;&=\; \mathcal{T}'\big(\mathfrak{r}_n\mathfrak{s}_1\mathfrak{r}_n\big)\;=\; \mathcal{T}'\big(\mathfrak{s}_1\mathfrak{r}_{n-e_1}\mathfrak{r}_n\big)
    \end{split}
\end{equation*}
so an inductive argument shows that $\mathcal{T}'(\mathfrak{r}_n\mathfrak{s}_1)=0$ and in the same vein $$\mathcal{T}'\big(\mathfrak{r}_n\mathfrak{s}_2\big)\;=\;\mathcal{T}'\big(\mathfrak{l}_n\mathfrak{s}_1\big)\;=\;\mathcal{T}'\big(\mathfrak{l}_n\mathfrak{s}_2\big)\;=\;0\;, \qquad \;\forall \,n\in \Z^2$$
The above implies that $\mathcal{T}'(\mathfrak{a}\mathfrak{s}^{n})=0$ for all $n\in \Z^2\setminus\{(0,0)\}$ and $\mathfrak{a}\in C_c(\Omega_\alpha^\circ)$. In particular, for any $\mathfrak{a}=\sum_{n\in \Lambda} \mathfrak{a}_n\mathfrak{s}^{n}\in \mathfrak{I}_\alpha^0$  one has
$\mathcal{T}'(\mathfrak{a})=\mathcal{T}'(\mathfrak{a}_{(0,0)})$. Then the Riesz representation theorem implies that there exists a unique Borel regular $\Z^2$-invariant measure $\mu$ on $\Omega_\alpha^\circ$ such that
$$\mathcal{T}'(\mathfrak{a})\;=\;\int_{\Omega_\alpha^\circ}\dd \mu(\omega)\mathfrak{a}_{(0,0)}(\omega)$$
So we can conclude by Proposition \ref{prop: invariant measure} that $\mathcal{T}'$ is up to positive scaling factor,  equal to $\mathcal{T}_\alpha.$
\end{proof}
An explicit description of $\mathcal{T}_\alpha$ for $\alpha$ rational can be obtained as a direct consequence of Theorem \ref{teo: interface} and Proposition \ref{prop: interface trace}. In fact, $\mathcal{T}_\alpha$ under the isomorphism \eqref{interfae rational} agrees with  the trace $\tau_0\otimes {\rm Tr}_{\ell^2(\n{X}_\alpha)}$, where $\tau_0$ is the natural (normalized) trace on $C(\mathbb{T})$,  \ie
$$\tau_0(f)\;:=\;\int_{\mathbb{T}}{\rm d}k\,f(k)\;,\qquad f\in C(\mathbb{T})$$ 
with ${\rm d}k$  the normalized Haar measure on $\mathbb{T}$.

\subsection{K-theory of the Iwastuka algebras}\label{secc 3.1}
In this section we present  the computation of the $K$-theory of the algebras $\A_{\alpha}$ and $\I_\alpha$. The key tool is the Pimsner-Voiculescu exact sequence \cite{Pim} and the explicit description of $\Omega_\alpha$ given in Proposition \ref{teo cantor}.  We refer to the monographs \cite{Bla,Ro, Ols} for general aspect of the $K$-theory. For previous results about the $K$-theory of $C^*$-algebras associated with magnetic fields we refer to \cite{Deni1,Dani,PRO,Tom}. 

\medskip

We start with the description of the $K$-groups for $\alpha\in \mathbb{Q}\cup\{\pm\infty\}$. For that, observe that for  $n=(n_1,n_2) \in  \Z^2$  the unitary operator $\mathfrak{s}^{n}=\mathfrak{s}_1^{n_1}\mathfrak{s}_2^{n_2}$ implements an action of the subgroup $\Z_{n_1,n_2}$ by conjugation. Moreover, let $\mathfrak{u}_{n}^\sharp:=\mathfrak{1}+\mathfrak{q}_0^{\sharp}(\mathfrak{s}^n-\mathfrak{1})$  where $\sharp$ is used to label both the projections $\mathfrak{q}_{0}$ and $\mathfrak{q}_{0}^{\bot}$ defined \eqref{eq: projections}. If $\alpha=p/q$ with $p$ and $q$
relatively prime, then the invariance under the action of $\Z_{q,p}$ verifies that $\mathfrak{u}_{(q,p)}^\sharp$ is a unitary operator in $\A_{p/q}$. When $\alpha=\pm\infty$ the same analysis gives that $\mathfrak{u}_{\pm e_2}^\sharp$ is unitary in $ \A_{\pm\infty}$.\\

The following result is obtained by rescaling the lattice and applying \cite[Theorem 4.10]{Deni1}.


\begin{proposition}
    \label{teo: k-groups rational}
    For $\alpha\in \mathbb{Q}\cup\{\pm\infty\}$ the $K$-groups of the Iwatsuka algebra are given by
    $$K_0(\A_\alpha)\;=\;\Z^3\;=\;K_1(\A_\alpha)\;.
    $$
The generators for $\alpha=p/q$ with $p$ and $q$ relatively primes are
      \begin{equation}\label{p/q}
    \begin{split}
K_0(\A_\alpha)\;&=\;\Z[\mathfrak{q}_0]\oplus\Z[\mathfrak{q}^{\bot}_0]\oplus \Z[\mathfrak{c}]\;,\\
K_1(\A_\alpha)\;&=\;\Z[\mathfrak{s}_{(-p,q)}]\oplus \Z[\mathfrak{u}_{(q,p)}]\oplus \Z[\mathfrak{u}_{(q,p)}^\bot]\;,    \end{split}
    \end{equation}
and for $\alpha =\pm\infty$ are given by
 \begin{equation}
    \begin{split}
K_0(\A_\alpha)\;&=\;\Z[\mathfrak{q}_0]\oplus\Z[\mathfrak{q}^{\bot}_0]\oplus \Z[\mathfrak{c}_\alpha]\;,\\
K_1(\A_\alpha)\;&=\;\Z[\mathfrak{s}_1]\oplus \Z[\mathfrak{u}_{e_2}]\oplus \Z[\mathfrak{u}_{e_2}^\bot] \;.   \end{split}
    \end{equation}
    In the formulas above $\mathfrak{c}$ denotes a suitable projection in $\A_\alpha\otimes {\rm Mat}_N(\C)$ for some $N\in \mathbb{N}.$
\end{proposition}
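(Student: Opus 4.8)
The plan is to reduce the rational case $\alpha=p/q$ to the already-understood standard case $\alpha=\pm\infty$ by a unimodular change of the lattice basis adapted to the slope of the interface, and then to quote \cite[Theorem 4.10]{Deni1}. The geometric input is that the Iwatsuka field $B_{p/q}$ is constant along the interface direction $(q,p)$: indeed $\mathfrak{f}_\alpha$ is invariant under the subgroup $\Z_{q,p}$, exactly as $\mathfrak{f}_{\pm\infty}$ is invariant under translations in the $e_2$-direction. Thus, once the lattice is re-coordinatized so that $(q,p)$ becomes one of the basis vectors, the flux operator depends only on the transverse coordinate $-pn_1+qn_2\in\Z$, and the whole structure becomes that of a standard Iwatsuka algebra.

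Concretely, since $\gcd(p,q)=1$ one may choose $s,t\in\Z$ with $1=-ps+qt$ (as in Remark \ref{rk:0-bull}), so that $\{(q,p),(s,t)\}$ is a $\Z$-basis of $\Z^2$. First I would record the new generating magnetic translations $\mathfrak{s}^{(q,p)}=\mathfrak{s}_1^{q}\mathfrak{s}_2^{p}$ and $\mathfrak{s}^{(s,t)}=\mathfrak{s}_1^{s}\mathfrak{s}_2^{t}$, check that together with $\mathfrak{F}_\alpha$ they generate all of $\A_\alpha$, and compute the new magnetic commutation relation they obey. Because $\mathfrak{f}_\alpha$ is $\Z_{q,p}$-invariant, in these coordinates $\mathfrak{f}_\alpha$ is a function of the transverse variable only, so the crossed-product description of Proposition \ref{prop: cros} takes the same form as for $\alpha=\pm\infty$, with the along-interface translation $\mathfrak{s}^{(q,p)}$ playing the role of $\mathfrak{s}_2$ and a transverse translation playing the role of $\mathfrak{s}_1$. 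The finite internal degeneracy recorded in Proposition \ref{teo_disc_rat} and visible in $\I_\alpha\simeq C(\T)\otimes\mathfrak{K}$ (Theorem \ref{teo: interface}) is absorbed into the stabilizing copy of $\mathfrak{K}$ and is invisible to $K$-theory.

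With this identification in place, I would apply \cite[Theorem 4.10]{Deni1}: its Pimsner-Voiculescu computation for the standard Iwatsuka algebra gives $K_0=\Z^3=K_1$, and transporting the generators through the change of basis yields the stated lists. The two bulk-type $K_0$-classes $[\mathfrak{q}_0]$, $[\mathfrak{q}_0^{\bot}]$ and the additional bulk projection class $[\mathfrak{c}]$ are unaffected, since they live in $\mathfrak{F}_\alpha$ and in $\A_\alpha\otimes\Mat_N(\C)$ and are defined identically; meanwhile the three $K_1$-generators $[\mathfrak{s}_1]$, $[\mathfrak{u}_{e_2}]$, $[\mathfrak{u}_{e_2}^{\bot}]$ of the $\pm\infty$ case are replaced by their images $[\mathfrak{s}_{(-p,q)}]$, $[\mathfrak{u}_{(q,p)}]$, $[\mathfrak{u}_{(q,p)}^{\bot}]$ under the rescaling, where $\mathfrak{u}_{(q,p)}^{\sharp}=\mathfrak{1}+\mathfrak{q}_0^{\sharp}(\mathfrak{s}^{(q,p)}-\mathfrak{1})$ is unitary precisely because of the $\Z_{q,p}$-invariance noted above.

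The main obstacle is the careful bookkeeping that turns the unimodular change of basis into an honest $\ast$-isomorphism of $C^*$-algebras. A basis change does not simply permute the generators $\mathfrak{s}_1,\mathfrak{s}_2$: the associated $2$-cocycle $\theta_\alpha$ of Proposition \ref{prop: cros} is transformed, and one has to verify that the new cocycle is cohomologous to the one defining a standard Iwatsuka algebra (equivalently, that $\mathfrak{s}^{(q,p)}$ and the transverse generator satisfy the expected relation $\mathfrak{s}_1\mathfrak{s}_2=\mathfrak{f}\,\mathfrak{s}_2\mathfrak{s}_1$ with the transformed flux). One must also reconcile the transverse generator $\mathfrak{s}_{(-p,q)}$ appearing in the statement, chosen orthogonal to the interface and consistent with the normalization $c_\alpha=(p^2+q^2)^{-1/2}$ of the interface measure, with the unimodular complement $\mathfrak{s}^{(s,t)}$ used to build the basis, and confirm that $[\mathfrak{s}_{(-p,q)}]$ indeed represents the intended $K_1$-generator. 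Once these identifications are secured, the $K$-groups and their generators follow directly from \cite[Theorem 4.10]{Deni1}.
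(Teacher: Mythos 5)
Your overall strategy --- exploit the $\Z_{q,p}$-invariance of $\mathfrak{f}_\alpha$ to \virg{rescale the lattice} and then invoke \cite[Theorem 4.10]{Deni1} --- is exactly the strategy the paper announces and follows. The implementation differs: the paper does not pass to a unimodular basis, but instead decomposes $\A_\alpha\simeq\big(C(\Omega_\alpha)\rtimes_{\sigma,\theta_\alpha}\Z_{q,p}\big)\rtimes\Z_{-p,q}$ by applying the Packer--Raeburn subgroup/quotient decomposition to $N=\Z_{q,p}\trianglelefteq\Z^2$ with $\Z^2/\Z_{q,p}\simeq\Z_{-p,q}$, exactly as in the proof of Theorem \ref{teo: interface}, and then adapts the Pimsner--Voiculescu computation of \cite[Theorem 4.10]{Deni1} to this two-step crossed product. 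The cocycle bookkeeping you worry about is thereby absorbed into the induced cocycle on the quotient rather than into an explicit $\ast$-isomorphism with a standard Iwatsuka algebra.

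The genuine gap is the reconciliation step you flag but do not resolve, and it is not a formality. With $qt-ps=1$, the transverse generator your construction naturally produces is $[\mathfrak{s}^{(s,t)}]$. Since $[\mathfrak{s}^{m+n}]=[\mathfrak{s}^m]+[\mathfrak{s}^n]$ in $K_1(\A_\alpha)$ (the discrepancy unitary lies in $\mathfrak{F}_\alpha\simeq C(\Omega_\alpha)$, which has trivial $K_1$ because $\Omega_\alpha$ is totally disconnected), one computes
\begin{equation*}
[\mathfrak{s}_{(-p,q)}]\;=\;(-pt-qs)\,[\mathfrak{s}^{(q,p)}]\;+\;(p^2+q^2)\,[\mathfrak{s}^{(s,t)}]\;,
\qquad [\mathfrak{s}^{(q,p)}]\;=\;[\mathfrak{u}_{(q,p)}]+[\mathfrak{u}_{(q,p)}^\bot]\;.
\end{equation*}
Hence, if $\{[\mathfrak{s}^{(s,t)}],[\mathfrak{u}_{(q,p)}],[\mathfrak{u}_{(q,p)}^\bot]\}$ were a free basis of $K_1(\A_\alpha)$, as your rescaling argument would yield, then $\{[\mathfrak{s}_{(-p,q)}],[\mathfrak{u}_{(q,p)}],[\mathfrak{u}_{(q,p)}^\bot]\}$ would generate only an index-$(p^2+q^2)$ subgroup; the two candidate generating sets cannot both be correct once $p^2+q^2>1$. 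So either your transported generator is $[\mathfrak{s}^{(s,t)}]$ and it does not match the statement, or the stated generator $[\mathfrak{s}_{(-p,q)}]$ is the right one and your identification of $\A_{p/q}$ with a standard Iwatsuka algebra in the basis $\{(q,p),(s,t)\}$ is producing a different $K_1$-class than you claim. Deciding which requires actually carrying out the Pimsner--Voiculescu computation in the chosen coordinates and tracking the index map on the transverse unitary --- precisely the part you have deferred. Your reduction does give the group-level conclusion $K_0(\A_\alpha)=K_1(\A_\alpha)=\Z^3$, but the generator lists do not \virg{follow directly} from transporting those of the $\pm\infty$ case.
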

\begin{proof}
    The case $\alpha=\pm \infty$ is just \cite[Theorem 4.10]{Deni1}. So it is enough  to consider only $\alpha=p/q.$ Arguing as in the proof of Theorem \ref{teo: interface} one gets
\begin{align*}   \A_\alpha\;\simeq\;\big(C(\Omega_\alpha)\rtimes_{\sigma,\theta_\alpha}\Z_{q,p}\big)\rtimes_{\sigma,\theta_\alpha}\Z_{-p,q}\;.
\end{align*}
    Since the action of $\Z_{q,p}$ and $\Z_{-p,q}$ are implemented by the unitaries $\mathfrak{s}_{(q,p)}$ and $\mathfrak{s}_{(-p,q)}$, respectively, the one can adapt step by step the arguments used in \cite[Theorem 4.10]{Deni1} arriving to \eqref{p/q}.
\end{proof}

\medskip

Now, let us come to the irrational case. First of all, recall that $\mathfrak{F}_\alpha \simeq C(\Omega_\alpha)$ where $\Omega_\alpha$ is a Cantor set as described in Proposition \ref{teo cantor}. Therefore, $\mathfrak{F}_\alpha$ is a commutative  AF algebra \cite[chapter III]{Bla} and its $K$-theory is given by $K_0(\mathfrak{F}_\alpha)=C(\Omega_\alpha, \Z)$ and $K_1(\mathfrak{F}_\alpha)=0$. Here $C(\Omega_\alpha,\Z)$ stands for the set of $\Z$-valued continuous functions on $\Omega_\alpha$. Explicitly, one can identify $K_0(\mathfrak{F}_\alpha)$ with the projectors defined in \eqref{eq: projections}
\begin{equation}\begin{split} K_0(\mathfrak{F}_\alpha)\;&=\;\bigoplus_{n\in \Z^2}\Z[\mathfrak{q}_{n}]\oplus \Z[\mathfrak{1}]\;.
\end{split}  \end{equation}

\medskip

Thanks to the Pimsner-Voiculescu exact sequence \cite{Pim} the $K$-groups of $\mathcal{Y}_1:=\mathfrak{F}_\alpha\rtimes_{\sigma_1}\Z$ are related with the $K$-groups of $\mathfrak{F}_\alpha$  through the following exact sequence
\begin{equation}\label{seq pim}
    \xymatrix{
 K_0\big(\mathfrak{F}_\alpha\big)\ar[r]^{\beta_{1*}} & K_0\big(\mathfrak{F}_\alpha\big) \ar[r]^{i_*}& K_0\big(\mathcal{Y}_1\big)\ar[d]^{\partial_0} \\
K_1\big(\mathcal{Y}_1\big)\ar[u]^{\partial_1} &K_1\big(\mathfrak{F}_\alpha\big)\ar[l]^{i_*} &K_1\big(\mathfrak{F}_\alpha\big) \ar[l]^{\beta_{1*}}\;. }
\end{equation}
Here the vertical maps $\partial_0$ and $\partial_1$ are the connecting maps of a suitable six-term exact sequence \cite[Chapter V]{Bla}, and $\beta_1:= {\bf1}-\sigma_1$.
Therefore, replacing the known $K$-groups in \eqref{seq pim} it turns out that
$$   \xymatrix{
\Z^{\oplus \Z^2}\oplus \Z\ar[r]^{} & \Z^{\oplus \Z^2}\oplus \Z \ar[r]^{}& K_0\big(\mathcal{Y}_1\big)\ar[d] \\
K_1\big(\mathcal{Y}_1\big)\ar[u] &0\ar[l] &\;\;0\;. \ar[l]
}$$
The remaining K-groups can be computed through the following proposition.
\begin{proposition}
The image and the kernel of the map $\beta_{1*}\colon K_0(\mathfrak{F}_\alpha)\rightarrow K_0(\mathfrak{F}_\alpha)$ are given by
$${\rm Im}(\beta_{1*})\;=\;\bigoplus_{n\in \Z^2}\Z\big([\mathfrak{q}_{n}]-[\mathfrak{q}_{n+e_1}]\big)\;,\qquad {\rm Ker}(\beta_{1*})\;=\;\Z[\mathfrak{1}]\;.$$
Therefore,
$$K_0(\mathcal{Y}_1)\;=\;
\bigoplus_{i\in \Z}\Z[\mathfrak{q}_{(0,i)}]\oplus\Z[\mathfrak{1}]\;,\qquad K_1(\mathcal{Y}_1)\;=\;\Z[\mathfrak{s}_1]\;.$$
\end{proposition}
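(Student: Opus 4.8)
The plan is to extract both $K$-groups of $\mathcal{Y}_1=\mathfrak{F}_\alpha\rtimes_{\sigma_1}\Z$ directly from the Pimsner--Voiculescu sequence \eqref{seq pim}. Since $K_1(\mathfrak{F}_\alpha)=0$, the map $\partial_0$ lands in the zero group, so $i_*\colon K_0(\mathfrak{F}_\alpha)\to K_0(\mathcal{Y}_1)$ is surjective with kernel ${\rm Im}(\beta_{1*})$, while $\partial_1\colon K_1(\mathcal{Y}_1)\to K_0(\mathfrak{F}_\alpha)$ is injective with image ${\rm Ker}(\beta_{1*})$. Hence
\[
K_0(\mathcal{Y}_1)\;\simeq\;K_0(\mathfrak{F}_\alpha)/{\rm Im}(\beta_{1*})\;,\qquad K_1(\mathcal{Y}_1)\;\simeq\;{\rm Ker}(\beta_{1*})\;,
\]
and the whole computation reduces to understanding the single endomorphism $\beta_{1*}=\mathbf{1}-\sigma_{1*}$ on the free basis $\{[\mathfrak{q}_n]\}_{n\in\Z^2}\cup\{[\mathfrak{1}]\}$ of $K_0(\mathfrak{F}_\alpha)$.

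First I would pin down the action of $\sigma_{1*}$ on this basis. The automorphism $\sigma_1$ is conjugation by $\mathfrak{s}_1$, that is, the translation $\sigma_{e_1}$; since each $\mathfrak{q}_n$ is built from $\sigma_n(\mathfrak{f}_\alpha)$ through \eqref{eq: projections} and the group law gives $\sigma_{e_1}\circ\sigma_n=\sigma_{n+e_1}$, one obtains $\sigma_1(\mathfrak{q}_n)=\mathfrak{q}_{n+e_1}$ and $\sigma_1(\mathfrak{1})=\mathfrak{1}$. Therefore $\beta_{1*}[\mathfrak{q}_n]=[\mathfrak{q}_n]-[\mathfrak{q}_{n+e_1}]$ and $\beta_{1*}[\mathfrak{1}]=0$. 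Image and kernel then follow from bookkeeping on the free abelian group: for a finitely supported combination $\sum_n c_n[\mathfrak{q}_n]+c[\mathfrak{1}]$ the coefficient of $[\mathfrak{q}_m]$ in its image is $c_m-c_{m-e_1}$, which vanishes for every $m$ precisely when $c$ is constant along each $e_1$-row, hence (by finite support) when $c\equiv 0$. This simultaneously shows that the differences $[\mathfrak{q}_n]-[\mathfrak{q}_{n+e_1}]$ are linearly independent, so ${\rm Im}(\beta_{1*})=\bigoplus_{n\in\Z^2}\Z([\mathfrak{q}_n]-[\mathfrak{q}_{n+e_1}])$, and that ${\rm Ker}(\beta_{1*})=\Z[\mathfrak{1}]$.

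To identify the quotient I would pass to coordinates indexed by the second component: the assignment $[\mathfrak{q}_{(n_1,n_2)}]\mapsto\bar e_{n_2}$, $[\mathfrak{1}]\mapsto[\mathfrak{1}]$ kills every generator $[\mathfrak{q}_n]-[\mathfrak{q}_{n+e_1}]$ and so descends to a surjection onto $\bigoplus_{i\in\Z}\Z\bar e_i\oplus\Z[\mathfrak{1}]$. Conversely, if an element has vanishing $e_1$-row sums, the telescoping identity $[\mathfrak{q}_{(n_1,i)}]-[\mathfrak{q}_{(0,i)}]\in{\rm Im}(\beta_{1*})$ lets one rewrite it as an element of ${\rm Im}(\beta_{1*})$, giving injectivity. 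This yields the claimed
\[
K_0(\mathcal{Y}_1)\;=\;\bigoplus_{i\in\Z}\Z[\mathfrak{q}_{(0,i)}]\oplus\Z[\mathfrak{1}]\;.
\]

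It remains to realize the generator of $K_1(\mathcal{Y}_1)\simeq{\rm Ker}(\beta_{1*})=\Z[\mathfrak{1}]$ by the unitary $\mathfrak{s}_1$. Here I would invoke the standard naturality of the Pimsner--Voiculescu index map: for a crossed product $A\rtimes_\beta\Z$ with canonical implementing unitary $u$, the connecting homomorphism $\partial_1$ sends $[u]$ to $[\mathbf{1}_A]\in{\rm Ker}(\mathbf{1}-\beta_*)$. Taking $A=\mathfrak{F}_\alpha$ and $u=\mathfrak{s}_1$ gives $\partial_1[\mathfrak{s}_1]=[\mathfrak{1}]$, so $[\mathfrak{s}_1]$ generates and $K_1(\mathcal{Y}_1)=\Z[\mathfrak{s}_1]$. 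I expect this last identification to be the only genuinely non-mechanical point, since everything preceding it is the routine collapse of \eqref{seq pim} together with the telescoping computation on a free abelian group; the subtlety lies purely in matching the abstract $K_1$-generator coming from ${\rm Ker}(\beta_{1*})$ with the concrete implementing unitary $\mathfrak{s}_1$ of the crossed product.
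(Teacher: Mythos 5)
Your argument is correct and follows essentially the same route as the paper: collapse the Pimsner--Voiculescu sequence using $K_1(\mathfrak{F}_\alpha)=0$, compute $\beta_{1*}$ on the free basis $\{[\mathfrak{q}_n]\}\cup\{[\mathfrak{1}]\}$ via $\sigma_1(\mathfrak{q}_n)=\mathfrak{q}_{n+e_1}$, and read off $K_0$ as the quotient and $K_1$ as the kernel. The only cosmetic difference is that you make the telescoping/row-sum bookkeeping explicit (the paper merely asserts it) and you invoke the standard fact $\partial_1[\mathfrak{s}_1]=\pm[\mathfrak{1}]$ where the paper computes it with the explicit isometry $v_1=\mathfrak{s}_1\otimes\mathfrak{v}$ from \cite[Proposition 4.9]{Deni1}; both are legitimate.
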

\begin{proof} By using the relations between the elements of $\mathfrak{F}_\alpha$ given in \eqref{eq: projections} one gets
\begin{equation*}
\begin{split}       \beta_{1*}\big([\mathfrak{q}_n]\big)&\;=\;[\mathfrak{q}_{n}]-[\mathfrak{q}_{n+e_1}]\;,\quad \beta_{1*}([\mathfrak{1}])\;=\;[0]
\end{split}
\end{equation*}
where $e_1=(1,0)$ and  $n\in \Z^2$. Hence  ${\rm Im}(\beta_{1*})=\bigoplus_{n\in \Z}\Z\big([\mathfrak{q}_{n}]-[\mathfrak{q}_{n+e_1}]\big)$ and the kernel of $\beta_{1*}$ is $\Z[\mathfrak{1}].$ Notice that the sequence \eqref{seq pim} yields that ${\rm Ker}(\partial_1)={\rm Im}(i_*)=0$, then $$K_1(\mathcal{Y}_1)\;=\;{\rm Im}(\partial_1)\;=\;{\rm Ker}(\beta_{1*})\;\simeq\; \Z[\mathfrak{1}]\;.$$
By taking  the isometry $v_1:=\mathfrak{s}_1\otimes \mathfrak{v}$ defined in the proof of \cite[Proposition 4.9]{Deni1} one gets  $\partial_1([\mathfrak{s}_1])=-[\mathfrak{1}]$. Consequently, $K_1(\mathcal{Y}_1)=\mathbb{Z}[\mathfrak{s}_1].$ Finally one has
\begin{equation*}
\begin{split}K_0(\mathcal{Y}_1)&\;=\;K_0(\mathfrak{F}_\alpha)/{\rm Im}(\beta_{1*})\;=\;\bigoplus_{i\in \Z}\Z[\mathfrak{q}_{(0,i)}]\oplus \Z[\mathfrak{1}]
\end{split}
\end{equation*}
as claimed.
\end{proof}
Since $\A_\alpha\simeq\mathcal{Y}_1\rtimes_{\sigma_2}\Z$, the Pimsner-Voiculescu exact sequence implies
\begin{equation}\label{seq 3,8}
    \xymatrix{
 K_0\big(\mathcal{Y}_1\big)\ar[r]^{\beta_{2*}} & K_0\big(\mathcal{Y}_1\big) \ar[r]^{i_*}& K_0\big(\A_\alpha\big)\ar[d]^{\partial_0} \\
K_1\big(\A_\alpha\big)\ar[u]^{\partial_1} &K_1\big(\mathcal{Y}_1\big)\ar[l]^{i_*} &K_1\big(\mathcal{Y}_1\big) \ar[l]^{\beta_{2*}} }
\end{equation}
with $\beta_2:= {\bf1}-\sigma_2$. Replacing the known $ K$-groups it follows that
$$   \xymatrix{
\Z^{\oplus \Z}\oplus \Z \ar[r]^{} & \Z^{\oplus \Z}\oplus \Z \ar[r]^{}& K_0\big(\A_\alpha\big)\ar[d] \\
K_1\big(\A_\alpha\big)\ar[u] &\Z\ar[l] &\;\;\Z\;. \ar[l] }$$
Now we have all the elements to compute the $K$-groups of $\A_\alpha$.
\begin{proposition}
   \label{k-groups A}
The image and the kernel of the map $\beta_{2*}\colon K_0(\mathcal{Y}_1)\rightarrow K_0(\mathcal{Y}_1)$ are given by
$${\rm Im}(\beta_{2*})\;=\;\bigoplus_{i\in \Z}\Z\big([\mathfrak{q}_{(0,i)}]-[\mathfrak{q}_{(0,i+1)}]\big)\;,\qquad {\rm Ker}(\beta_{2*})\;=\;\Z[\mathfrak{1}]\;.$$
Therefore,
$$K_0(\A_\alpha)\;=\;\Z[\mathfrak{q}_0]\oplus\Z[\mathfrak{q}^{\bot}_0]\oplus \Z[\mathfrak{c}]
\;,\qquad K_1(\A_\alpha)\;=\;\Z[\mathfrak{s}_1]\oplus \Z[\mathfrak{s}_2]\;,$$
where $\mathfrak{c}$ is a projection in $\A_\alpha\otimes {\rm Mat}_N(\C)$ for some $N\in \mathbb{N}.$ 
\end{proposition}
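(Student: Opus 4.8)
The plan is to mirror the argument of the preceding proposition, feeding the computed $K$-groups of $\mathcal{Y}_1=\mathfrak{F}_\alpha\rtimes_{\sigma_1}\Z$ into the Pimsner--Voiculescu sequence \eqref{seq 3,8}. First I would compute the action of $\beta_{2*}=\mathbf{1}-\sigma_{2*}$ on the generators of $K_0(\mathcal{Y}_1)=\bigoplus_{i\in\Z}\Z[\mathfrak{q}_{(0,i)}]\oplus\Z[\mathfrak{1}]$. Since $\sigma_2$ is conjugation by $\mathfrak{s}_2$ and the projections in \eqref{eq: projections} are built from translates of $\mathfrak{f}_\alpha$, one has $\sigma_2(\mathfrak{q}_{(0,i)})=\mathfrak{q}_{(0,i+1)}$ and $\sigma_2(\mathfrak{1})=\mathfrak{1}$, whence $\beta_{2*}([\mathfrak{q}_{(0,i)}])=[\mathfrak{q}_{(0,i)}]-[\mathfrak{q}_{(0,i+1)}]$ and $\beta_{2*}([\mathfrak{1}])=0$. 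A finitely supported element $\sum_i a_i[\mathfrak{q}_{(0,i)}]+b[\mathfrak{1}]$ is then sent to $\sum_i(a_i-a_{i-1})[\mathfrak{q}_{(0,i)}]$, which vanishes precisely when all $a_i$ agree, i.e. when all $a_i=0$; this yields ${\rm Ker}(\beta_{2*})=\Z[\mathfrak{1}]$ together with the stated image.

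Next I would determine $\beta_{2*}$ on $K_1(\mathcal{Y}_1)=\Z[\mathfrak{s}_1]$. From the commutation relation \eqref{equ 2.4} one gets $\sigma_2(\mathfrak{s}_1)=\mathfrak{s}_2\mathfrak{s}_1\mathfrak{s}_2^{-1}=\mathfrak{f}_\alpha^{-1}\mathfrak{s}_1$, so $\sigma_{2*}[\mathfrak{s}_1]=[\mathfrak{f}_\alpha^{-1}]+[\mathfrak{s}_1]$ in $K_1(\mathcal{Y}_1)$. Because $\mathfrak{f}_\alpha$ is a unitary of $\mathfrak{F}_\alpha\simeq C(\Omega_\alpha)$ with $\Omega_\alpha$ a Cantor set (Proposition \ref{teo cantor}), one has $K_1(\mathfrak{F}_\alpha)=0$, hence $[\mathfrak{f}_\alpha^{-1}]=0$ and $\sigma_{2*}$ acts trivially, so $\beta_{2*}=0$ on $K_1(\mathcal{Y}_1)$. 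Substituting into \eqref{seq 3,8}, exactness at the right-hand $K_0(\mathcal{Y}_1)$ gives ${\rm Im}(i_*)={\rm coker}(\beta_{2*})\cong\Z[\mathfrak{q}_0]\oplus\Z[\mathfrak{1}]\cong\Z^2$ (all the $[\mathfrak{q}_{(0,i)}]$ being identified), while $\beta_{2*}=0$ on $K_1$ forces $\partial_0$ to be surjective onto $K_1(\mathcal{Y}_1)=\Z$. This produces the short exact sequence $0\to\Z^2\to K_0(\A_\alpha)\to\Z\to0$, which splits, so $K_0(\A_\alpha)=\Z^3$. Dually, injectivity of $i_*$ on $K_1$ (its kernel is ${\rm Im}(\beta_{2*})=0$) together with ${\rm Im}(\partial_1)={\rm Ker}(\beta_{2*}|_{K_0})=\Z[\mathfrak{1}]$ gives $0\to\Z[\mathfrak{s}_1]\to K_1(\A_\alpha)\to\Z[\mathfrak{1}]\to0$, whence $K_1(\A_\alpha)=\Z^2$.

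Finally I would identify the generators. Rewriting the cokernel basis $\{[\mathfrak{q}_0],[\mathfrak{1}]\}$ as $\{[\mathfrak{q}_0],[\mathfrak{q}^\bot_0]\}$ via $[\mathfrak{q}^\bot_0]=[\mathfrak{1}]-[\mathfrak{q}_0]$ supplies two of the three summands of $K_0(\A_\alpha)$; the third, $\Z[\mathfrak{c}]$, is the class splitting $\partial_0$, namely a projection $\mathfrak{c}\in\A_\alpha\otimes{\rm Mat}_N(\C)$ with $\partial_0[\mathfrak{c}]=[\mathfrak{s}_1]$, constructed exactly as in \cite[Theorem 4.10]{Deni1}. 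On the $K_1$ side, taking the isometry $v_2:=\mathfrak{s}_2\otimes\mathfrak{v}$ in place of the $v_1$ from the proof of \cite[Proposition 4.9]{Deni1} yields $\partial_1([\mathfrak{s}_2])=-[\mathfrak{1}]$, so $[\mathfrak{s}_2]$ realizes the splitting and $K_1(\A_\alpha)=\Z[\mathfrak{s}_1]\oplus\Z[\mathfrak{s}_2]$. The main obstacle I anticipate is the explicit construction of the splitting projection $\mathfrak{c}$ and the verification that $\partial_0[\mathfrak{c}]=[\mathfrak{s}_1]$, a Powers--Rieffel-type index computation; the triviality of $\sigma_{2*}$ on $K_1(\mathcal{Y}_1)$, though elementary, is the other delicate point, since an erroneous nonzero contribution there would alter the rank of $K_1(\A_\alpha)$.
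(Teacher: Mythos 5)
Your proposal is correct and follows essentially the same route as the paper: compute $\beta_{2*}$ on the generators of $K_0(\mathcal{Y}_1)$ and $K_1(\mathcal{Y}_1)$, read off image and kernel, and splice the resulting split short exact sequences from the Pimsner--Voiculescu diagram, identifying the generators via the partial isometry $v_2=\mathfrak{s}_2\otimes\mathfrak{v}$ and the projection $\mathfrak{c}$ exactly as in \cite[Theorem 4.10]{Deni1}. The only (harmless) divergence is that you kill $[\mathfrak{f}_\alpha^{-1}]$ by invoking $K_1(\mathfrak{F}_\alpha)=0$ for the Cantor set $\Omega_\alpha$, whereas the paper exhibits the explicit homotopy $t\mapsto \expo{-\ii b_+ t}\mathfrak{q}_0+\expo{-\ii b_- t}\mathfrak{q}_0^\bot$; both are valid.
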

\begin{proof} We know from \eqref{eq: projections} that
\begin{equation*}
\begin{split}
       \beta_{2*}\big([\mathfrak{q}_{(0,i)}]\big)\;=\;[\mathfrak{q}_{(0,i)}]-[\mathfrak{q}_{(0,i+1)}]\;,\quad        \beta_{2*}\big([\mathfrak{1}]\big)\;=\;[0]
\end{split}
\end{equation*}
for $i\in \Z$. It in turn that
$${\rm Im}(\beta_{2*})\;=\;\bigoplus_{i\in \Z}\Z\big([\mathfrak{q}_{(0,i)}]-[\mathfrak{q}_{(0,i+1)}]\big)\;,\qquad {\rm Ker}(\beta_{2*})\;=\;\Z[\mathfrak{1}]\;,$$
As a consequence
\begin{equation}\label{kkk}
    \begin{split}
        K_1(\A_\alpha)/{\rm Im}(i_*)\;\simeq\; {\rm Im}(\partial_1)\;\simeq\; {\rm Ker}(\beta_{2*})\;=\;\mathbb{Z}[\mathfrak{1}]\;.
    \end{split}
\end{equation}
The map $\beta_{2*}\colon K_1(\mathcal{Y}_1)\rightarrow K_1(\mathcal{Y}_1)$ meets
$$ \beta_{2*}\big([\mathfrak{s}_{1}]\big)\;=\;[\mathfrak{s}_1]-[\mathfrak{s_2\mathfrak{s}_1\mathfrak{s}_2^*}]\;=\;[\mathfrak{s}_1]-[\overline{\mathfrak{f}_B}\mathfrak{s}_1]\;=\;[\mathfrak{0}]\;,$$
where we have used that $[\overline{\mathfrak{f}_B}]=[\mathfrak{1}]$ with the homotopy $t\mapsto \expo{-\ii b_+ t}\mathfrak{q}_0+\expo{-\ii b_- t}\mathfrak{q}_0^\bot.$
Thus $\beta_{2*}\colon\Z[\mathfrak{s}_1]\rightarrow\Z[\mathfrak{s}_1]$ is the trivial homomorphism and consequently ${\rm Im}(i_*)\simeq \Z[\mathfrak{s}_1]$. Furthermore, invoking again the same argument in the proof of \cite[Proposition 4.9]{Deni1} with the isometry $v_2=\mathfrak{s}_2\otimes \mathfrak{v}$ one gets that  $\partial_1([\mathfrak{s}_2])=-[\mathfrak{1}]$ and $K_1(\A_\alpha)=\Z[\mathfrak{s}_1]\oplus \Z[\mathfrak{s}_2]$ in view of \eqref{kkk}. 
Now let us compute $K_0(\A_\alpha)$. Thanks to \eqref{seq 3,8}, one has
\begin{equation}\label{nnn}
    K_0(\A_\alpha)/{\rm Im}(i_*)\;\simeq \;{\rm Im}(\partial_0)\;\simeq \;{\rm Ker}(\beta_{2*})\;=\;\mathbb{Z}[\mathfrak{s}_1]\;,
\end{equation}
where recall that $\beta_{2*}\colon\Z[\mathfrak{s}_1]\rightarrow\Z[\mathfrak{s}_1]$ is the zero map. Since $${\rm Im}(i_*)\;=\;K_0(\mathcal{Y}_1)/{\rm Im}(\beta_{2*})\;=\;\Z[\mathfrak{q}_{0}]\oplus \Z[\mathfrak{q}^\bot_{0}]\;. 
$$
Finally, for some $N\in \N$ there is a projection $\mathfrak{c}\in\mathfrak{A}\otimes {\rm Mat}_N(\C)$ such that $\partial_0([\mathfrak{c}])=-[\mathfrak{s}_1]$ and by \eqref{nnn} it holds that  $K_0(\A_\alpha)=\Z[\mathfrak{q}_0]\oplus\Z[\mathfrak{q}^\bot_0]\oplus \Z[\mathfrak{c}]$.
\end{proof}

\subsection{K-theory of the interface algebra}
Consider the six-term exact sequence related to \eqref{seq a}
\begin{equation}\label{seq six}
    \xymatrix{
K_0(\mathfrak{I}_{\alpha})\ar[r]^{\hspace{-0,5cm}i_*} &  K_0(\A_{\alpha}) \ar[r]^{\hspace{0,1cm}{\rm ev}_*}& K_0(\A_{\rm bulk})\ar[d]^{\mathtt{exp}} \\
K_1(\mathfrak{A}_{\rm bulk})\ar[u]^{\mathtt ind} & K_1(\A_{\alpha})
\ar[l]^{\hspace{-0,1cm}{\rm ev}_*} & K_1(\mathfrak{I}_{\alpha})\ar[l]^{\hspace{0,5cm}i_*} }
\end{equation}
Propositions \ref{k-groups A} and \ref{teo: k-groups rational} provide the $K$-groups of $\A_\alpha$. Since $\A_{{\rm bulk}}$ is isomorphic to the direct sum of two noncommutative torus \cite[Example 2.10]{Deni1} then its K-theory is well-known \cite{Bla,Ols}:
$$
\begin{aligned}
K_0(\A_{{\rm bulk}})\;&=\;\Z[(\mathfrak{1,0})]\oplus\Z[(\mathfrak{0,1})]\oplus\Z[(\mathfrak{p}_{b_-},\mathfrak{0})]\oplus\Z[(\mathfrak{0},\mathfrak{p}_{b_+})]\;,\\
K_1(\A_{{\rm bulk}})\;&=\;\Z[(\mathfrak{s}_{b_{-},1},\mathfrak{1})]\oplus\Z[(\mathfrak{s}_{b_{-},2},\mathfrak{1})]\oplus\Z[(\mathfrak{1},\mathfrak{s}_{b_+,1})]\oplus\Z[(\mathfrak{1},\mathfrak{s}_{b_+,2})]\;,
\end{aligned}
$$
where $\mathfrak{p}_{b_-}$ and $\mathfrak{p}_{b_+}$ are the \emph{Powers-Rieffel projections} of $\A_{b_-}$ and $\A_{b_+}$ for $b_\pm \notin 2\pi \Z$, respectively. Otherwise, these projection agrees with the \emph{Bott projection} on $C(\T^2).$
It follows that the sequence \eqref{seq six} for $\alpha$ irrational turns out to be
$$ \xymatrix{
 K_0(\mathfrak{I}_\alpha)\ar[r]^{i_*} & \Z^3 \ar[r]^{{\rm ev}_*}& \Z^4\ar[d]^{{\mathtt{exp}}} \\
\Z^4\ar[u]^{{\mathtt{ind}}} &\Z^2
\ar[l]^{{\rm ev}_*} &K_1(\mathfrak{I}_\alpha) \ar[l]^{i_*}\;, }$$
while for $\alpha$ rational
$$ \xymatrix{
 K_0(\mathfrak{I}_\alpha)\ar[r]^{i_*} & \Z^3 \ar[r]^{{\rm ev}_*}& \Z^4\ar[d]^{{\mathtt{exp}}} \\
\Z^4\ar[u]^{{\mathtt{ind}}} &\Z^3
\ar[l]^{{\rm ev}_*} &K_1(\mathfrak{I}_\alpha) \ar[l]^{i_*}\;. }$$
Observe that in both cases the elements $(\mathfrak{0,1})$ and $(\mathfrak{1,0})$ in $\A_{\rm bulk}$ lift in the projections $\mathfrak{q}_0$ and $\mathfrak{q}^{\bot}_0$, respectively. Therefore, using  \cite[Proposition 12.2.2]{Ro} one finds 
\begin{equation}\label{expo}
{\mathtt{exp}}([(\mathfrak{1,0})])\;=\;{\mathtt{exp}}([(\mathfrak{0,1})])\;=\;[\mathfrak{1}]\;.
\end{equation}
On the other hand, we know that 
\begin{equation*}
    \begin{split}
        \mathfrak{p}_{b_+}\;&=\;\mathfrak{s}^*_{b_+,1}f(\mathfrak{s}_{b_+,2})+g(\mathfrak{s}_{b_+,2})+f(\mathfrak{s}_{b_+,2})\mathfrak{s}_{b_+,1}\;,\\
        \mathfrak{p}_{b_-}\;&=\;\mathfrak{s}^*_{b_-,1}f(\mathfrak{s}_{b_-,2})+g(\mathfrak{s}_{b_-,2})+f(\mathfrak{s}_{b_-,2})\mathfrak{s}_{b_-,1}\;,
    \end{split}
\end{equation*}
for some suitable continuous real functions $f$ and $g$ on $\mathbb{T}$ \cite[Proposition 12,4]{gracia-varilly-figueroa-01}. Consider now the self-adjoint lift of $(\mathfrak{p}_{b_-},\mathfrak{p}_{b_+})$ given by
\begin{equation*}
    \begin{split}\mathfrak{o}\;&=\;\mathfrak{s}^*_{1}f(\mathfrak{s}_{2})+g(\mathfrak{s}_{2})+f(\mathfrak{s}_{2})\mathfrak{s}_{1}\,.
    \end{split}
\end{equation*}
From the construction of $f$ and $g$ it follows that $\mathfrak{o}$ is a projection in $\A_\alpha$, and for this reason one has
\begin{equation*}
\begin{split}
    {\mathtt{exp}}\big([(\mathfrak{p}_{b_-},\mathfrak{0})]+[(\mathfrak{0},\mathfrak{p}_{b_+})]\big)\;&=\;   {\mathtt{exp}}\big([(\mathfrak{p}_{b_-},\mathfrak{p}_{b_+})]\big)\;=\;[\expo{2\pi \ii \mathfrak{o}}]\;=\;[\mathfrak{1}]\;.
\end{split}
\end{equation*}
As a result $ {\mathtt{exp}}\big([(\mathfrak{p}_{b_-},\mathfrak{0})]\big)=-{\mathtt{exp}}\big([(\mathfrak{0},\mathfrak{p}_{b_+})]\big)$ and 
\begin{equation}
  {\rm Im}(\mathtt{exp})\;=\;\Z[\mathtt{exp}(\mathfrak{p}_{b_-},\mathfrak{0})]\;=\;\Z[\mathtt{exp}(\mathfrak{0},\mathfrak{p}_{b_+})]  \;.
\end{equation}
Furthermore, Propositions \ref{teo: k-groups rational} and \ref{k-groups A} implies that ${\rm ev}_*\colon K_1(\A_\alpha)\to K_1(\A_{\rm bulk})$ is injective, so one concludes that 
\begin{equation}\label{exponential}
    K_1(\mathfrak{I}_\alpha)\;=\;\Z[\mathtt{exp}(\mathfrak{p}_{b_-},\mathfrak{0})]\;=\;\Z[\mathtt{exp}(\mathfrak{0},\mathfrak{p}_{b_+})]\;.
\end{equation}
Let us introduce a unitary element $\mathfrak{w}_\alpha$ in $\I_\alpha^\sim$ which meets the following condition
\begin{equation}\label{eq: generator}[\mathfrak{w}_\alpha]\;=\;\mathtt{exp}\big([(\mathfrak{0},\mathfrak{p}_{b_+})]\big)\;=\;\mathtt{exp}\big(-[(\mathfrak{p}_{b_-},\mathfrak{0})]\big)
\end{equation}
The unitary  $\mathfrak{w}_\alpha$ can be considered as \virg{approximate translation along the interface} since for rational values of $\alpha$ there is an explicit representative of $[\mathfrak{w}_\alpha]$ as a unitary which acts translating in the direction of the interface.

\medskip

To go further, let us now compute the $K$-groups of the interface algebra for rational slopes.  Consider 
\begin{equation}
    \mathfrak{p}_\alpha\;=\;\sigma_{(-p,q)}(\mathfrak{q}_0)-\mathfrak{q}_0\;\in\;\A_\alpha
\end{equation}
This element is a projection when $ \alpha=p/q$ and  allows to define the unitary operator $\mathfrak{w}'_\alpha$ in the unitization of $\I_\alpha$ via
\begin{equation}\label{unitario interface}\mathfrak{w}'_\alpha\;:=\;\mathfrak{1}+(\mathfrak{s}_{(q,p)}-\mathfrak{1})\mathfrak{p}_\alpha\;.
\end{equation}
It is important to point out that $\mathfrak{w}'_\alpha$ acts on $\ell^2(\Z^2)$ as a translation along the interface.
\begin{proposition}\label{prop: k interface racional}
If $\alpha$ is rational, then the classes $[\mathfrak{w}'_\alpha]=[\mathfrak{w}_\alpha]$ agrees in $K_1(\I_\alpha)$ and the $K$-theory of $\I_\alpha$ is 
    \[
     K_0(\I_\alpha)\;=\;\Z[\mathfrak{p}_\alpha]\;\simeq\;\Z\;,\qquad
         K_1(\I_\alpha)\;=\;\Z[\mathfrak{w}'_\alpha]\;\simeq\;\Z\;.
        \]
\end{proposition}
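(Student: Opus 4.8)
The plan is to read off the two groups from the structural isomorphism of Theorem~\ref{teo: interface} and then to certify that $\mathfrak{p}_\alpha$ and $\mathfrak{w}'_\alpha$ represent the corresponding generators, the genuinely new input being the equality $[\mathfrak{w}'_\alpha]=[\mathfrak{w}_\alpha]$. Since $\alpha$ is rational, Theorem~\ref{teo: interface} gives $\I_\alpha\simeq C(\T)\otimes\mathfrak{K}$; as $\mathfrak{K}$ is stable and $K_0(C(\T))=K_1(C(\T))=\Z$, one obtains $K_0(\I_\alpha)\simeq\Z$ and $K_1(\I_\alpha)\simeq\Z$ immediately, with generators the class of a minimal projection and the class of the winding unitary adjoined to the unit. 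Thus the substance lies entirely in matching these abstract generators with the explicit elements.

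For $K_0$, I would first check that $\mathfrak{p}_\alpha\in\I_\alpha$ is, up to a sign, a projection: it is the difference of the two commuting and comparable spectral projections $\sigma_{(-p,q)}(\mathfrak{q}_0)$ and $\mathfrak{q}_0$ of $\mathfrak{F}_\alpha$, while \eqref{sec 3.1} yields ${\rm ev}(\mathfrak{q}_n)=(\mathfrak{0},\mathfrak{1})$ for every $n$, so ${\rm ev}(\mathfrak{p}_\alpha)=0$. To see that $[\mathfrak{p}_\alpha]$ generates, I would transport it through the isomorphism of Theorem~\ref{teo: interface}: being invariant under the along-interface subgroup $\Z_{q,p}$, the projection $\mathfrak{p}_\alpha$ is sent by the Bloch--Floquet transform to a projection of the form $\mathbf{1}_{C(\T)}\otimes\mathfrak{e}$, whose class is detected by the unique trace. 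Under \eqref{interfae rational} this trace is $\tau_0\otimes{\rm Tr}$, so evaluating $\mathcal{T}_\alpha(\mathfrak{p}_\alpha)$ from $\mu_\alpha$ (Proposition~\ref{prop: invariant measure}) identifies $[\mathfrak{p}_\alpha]$ with the generator of $K_0(\I_\alpha)$. As a cross-check I would run the six-term sequence \eqref{seq six}: since ${\rm ev}_*$ is injective on $K_0(\A_\alpha)$, the map $i_*$ vanishes on $K_0(\I_\alpha)$, so $K_0(\I_\alpha)=\mathrm{Im}(\mathtt{ind})$, and $\mathtt{ind}$ applied to the along-interface translation class of $\A_{\rm bulk}$ should return $[\mathfrak{p}_\alpha]$.

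For $K_1$, \eqref{exponential} already gives $K_1(\I_\alpha)=\Z[\mathfrak{w}_\alpha]$ with $[\mathfrak{w}_\alpha]=\mathtt{exp}\big([(\mathfrak{0},\mathfrak{p}_{b_+})]\big)$, so it remains to prove $[\mathfrak{w}'_\alpha]=[\mathfrak{w}_\alpha]$. Because $\mathfrak{s}_{(q,p)}$ commutes with the $\Z_{q,p}$-invariant projection $\mathfrak{p}_\alpha$, the unitary \eqref{unitario interface} rewrites as $\mathfrak{w}'_\alpha=(\mathfrak{1}-\mathfrak{p}_\alpha)+\mathfrak{s}_{(q,p)}\mathfrak{p}_\alpha$ --- the identity off the strip and the along-interface translation on it --- and one verifies directly that it is unitary with ${\rm ev}(\mathfrak{w}'_\alpha)=\mathfrak{1}$. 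The plan is then to evaluate the exponential map on $(\mathfrak{0},\mathfrak{p}_{b_+})$ via the selfadjoint lift $\mathfrak{x}:=\mathfrak{q}_0\,\mathfrak{o}\,\mathfrak{q}_0$, where $\mathfrak{o}$ is the selfadjoint lift of $(\mathfrak{p}_{b_-},\mathfrak{p}_{b_+})$ introduced above so that ${\rm ev}(\mathfrak{x})=(\mathfrak{0},\mathfrak{p}_{b_+})$, and to exhibit a norm-continuous path of unitaries in $\I_\alpha^\sim$ from $\expo{2\pi\ii\mathfrak{x}}$ to $\mathfrak{w}'_\alpha$. This homotopy is the main obstacle: it requires tracking the magnetic phases accumulated by the translations across the interface and matching them, with the correct sign, against the winding of $\mathfrak{w}'_\alpha$, which is exactly what promotes the a priori statement that two elements are generators to the honest equality $[\mathfrak{w}'_\alpha]=[\mathfrak{w}_\alpha]$. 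The same attention to normalization is needed in the $K_0$ step, where one must confirm that $\mathcal{T}_\alpha(\mathfrak{p}_\alpha)$ realizes the generator rather than a proper multiple of it.
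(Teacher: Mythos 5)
Your computation of the two groups and your identification of $[\mathfrak{p}_\alpha]$ with the generator of $K_0(\I_\alpha)$ follow the same route as the paper: transport through the isomorphism $\I_\alpha\simeq C(\T)\otimes\mathfrak{K}$ of Theorem \ref{teo: interface} (with $C(\T)\simeq C^*(\mathfrak{s}_{(q,p)})$), under which $[\mathfrak{p}_\alpha]\mapsto[\mathfrak{1}]$ and $[\mathfrak{w}'_\alpha]\mapsto[\mathfrak{s}_{(q,p)}]$, so both classes are generators. That part is fine, and your trace cross-check is consistent with Remark \ref{remark 1}.

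The genuine gap is the last step, $[\mathfrak{w}'_\alpha]=[\mathfrak{w}_\alpha]$, which is the only nontrivial content beyond the structural isomorphism. You reduce it to exhibiting a norm-continuous path of unitaries in $\I_\alpha^\sim$ from $\expo{2\pi\ii\,\mathfrak{q}_0\mathfrak{o}\mathfrak{q}_0}$ to $\mathfrak{w}'_\alpha$, and you yourself flag this homotopy as ``the main obstacle'' without constructing it; as written the proof therefore does not close. (There is also a lift issue: $\mathfrak{q}_0\mathfrak{o}\mathfrak{q}_0$ need not be a projection, so $\expo{2\pi\ii\,\mathfrak{q}_0\mathfrak{o}\mathfrak{q}_0}$ is a legitimate representative of $\mathtt{exp}([(\mathfrak{0},\mathfrak{p}_{b_+})])$ only after checking it lies in $\I_\alpha^\sim$, and the explicit deformation would require controlling the magnetic phases across the whole strip, which is exactly the hard analysis one wants to avoid.) The paper sidesteps the homotopy entirely by a pairing argument: since $K_1(\I_\alpha)=\Z[\mathfrak{w}'_\alpha]$ and the noncommutative winding number $\W_\alpha$ descends to a group homomorphism on $K_1(\I_\alpha)$, the equality of classes follows once one knows $\W_\alpha(\mathfrak{w}'_\alpha)=1$ (the direct computation of Remark \ref{remark 1}) and $\W_\alpha(\mathfrak{w}_\alpha)=1$ (Theorem \ref{Teo el 1}, proved in the appendix via the duality with the smooth interface algebra of \cite{Tom}). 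If you want to keep your outline, you should replace the proposed homotopy by this cocycle pairing, or else actually produce the path of unitaries --- which would amount to reproving Theorem \ref{Teo el 1} by hand.
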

\begin{proof}
The case $\alpha=\pm \infty$ follows from Theorem \ref{teo: interface} and \cite[Proposition 4.14]{Deni1}. Now for $\alpha=p/q$ we first consider the identification $C(\T)\simeq C^*\big(\mathfrak{s}_{(q,p)}\big)$. Let us denote by $i\colon \I_\alpha\simeq C(\T)\otimes \mathfrak{K}(\ell^2(\n{X}_\alpha))$ the isomorphism provided by Theorem \ref{teo: interface}. Then, up to stabilization, one has $i_*([\mathfrak{p}_\alpha])=[\mathfrak{1}]$ and $i_*([\mathfrak{w}'_\alpha])=[\mathfrak{s}_{(q,p)}]$, which implies that
$$K_0(\I_\alpha)\;=\;\Z[\mathfrak{p}_\alpha],\qquad K_1(\I_\alpha)\;=\;\Z[\mathfrak{w}'_\alpha]\;. $$
To conclude that the classes $[\mathfrak{w}'_\alpha]=[\mathfrak{w}_\alpha]$ agrees in $K_1(\I_\alpha)$  is enough to see that both unitaries have the same noncommutative winding number by Remark \ref{remark 1} and Theorem \ref{Teo el 1}.
\end{proof}

\begin{proposition}\label{prop: k interface irrational}
   For $\alpha$ irrational it holds true that
$$K_0(\mathfrak{I}_\alpha)\;=\;\Z[\mathfrak{r}_0]\oplus\Z[\mathfrak{l}_0]\;\simeq\;\Z^2\;,\qquad K_1(\mathfrak{I}_\alpha)\;=\;\Z[\mathfrak{w}_\alpha]\;\simeq\;\Z\;.$$ 
\end{proposition}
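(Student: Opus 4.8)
The plan is to extract $K_\ast(\I_\alpha)$ from the six-term sequence \eqref{seq six}. Write $\phi_j\colon K_j(\A_\alpha)\to K_j(\A_{\rm bulk})$ for the two components of ${\rm ev}_*$. Two facts are already in place: $\phi_1$ is injective, so $i_*=0$ on $K_1(\I_\alpha)$ and hence $K_1(\I_\alpha)={\rm Im}(\mathtt{exp})=\Z[\mathfrak{w}_\alpha]$ is cyclic by \eqref{exponential}; and $\mathtt{exp}$ has been evaluated on all four generators of $K_0(\A_{\rm bulk})$. Exactness of \eqref{seq six} then produces the short exact sequence
\begin{equation*}
0\longrightarrow {\rm coker}\,\phi_1\longrightarrow K_0(\I_\alpha)\longrightarrow \ker\phi_0\longrightarrow 0\;,\qquad\text{together with}\qquad K_1(\I_\alpha)\simeq {\rm coker}\,\phi_0\;,
\end{equation*}
where the left map is $\mathtt{ind}$ and the middle map is $i_*$. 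Thus the whole computation reduces to understanding $\phi_0$ and ${\rm coker}\,\phi_1$.

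First I would dispatch the easy parts. On $K_1$, from ${\rm ev}(\mathfrak{s}_i)=(\mathfrak{s}_{b_-,i},\mathfrak{s}_{b_+,i})$ one gets $\phi_1([\mathfrak{s}_i])=[(\mathfrak{s}_{b_-,i},\mathfrak{1})]+[(\mathfrak{1},\mathfrak{s}_{b_+,i})]$ for $i=1,2$, and a Smith normal form computation gives ${\rm coker}\,\phi_1\simeq\Z^2$. On $K_0$, the relations \eqref{eq: projections} combined with ${\rm ev}(\mathfrak{f}_\alpha)=(\expo{\ii b_-}\mathfrak{1},\expo{\ii b_+}\mathfrak{1})$ yield ${\rm ev}(\mathfrak{q}_0)=(\mathfrak{0},\mathfrak{1})$ and ${\rm ev}(\mathfrak{q}^{\bot}_0)=(\mathfrak{1},\mathfrak{0})$, so $\phi_0$ maps $\Z[\mathfrak{q}_0]\oplus\Z[\mathfrak{q}^{\bot}_0]$ isomorphically onto the span of the two unit classes $[(\mathfrak{0},\mathfrak{1})]$ and $[(\mathfrak{1},\mathfrak{0})]$.

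The decisive point is to locate $\phi_0([\mathfrak{c}])$, and here I would exploit naturality of the Pimsner--Voiculescu boundary $\partial_0$ from the proof of Proposition~\ref{k-groups A} under ${\rm ev}$. Since ${\rm ev}$ intertwines the $\sigma_2$-actions on the coefficient algebras, it induces a morphism from the Pimsner--Voiculescu sequence of $\A_\alpha\simeq\mathcal{Y}_1\rtimes_{\sigma_2}\Z$ to the one of $\A_{\rm bulk}$, so that $\partial_0$ commutes with ${\rm ev}_*$. Because $\partial_0([\mathfrak{c}])=-[\mathfrak{s}_1]$ by Proposition~\ref{k-groups A}, this gives $\partial_0^{\rm bulk}(\phi_0([\mathfrak{c}]))=-[(\mathfrak{s}_{b_-,1},\mathfrak{s}_{b_+,1})]$, which is exactly (up to sign) the image under $\partial_0^{\rm bulk}$ of the Powers--Rieffel class $[(\mathfrak{p}_{b_-},\mathfrak{p}_{b_+})]$, since in each noncommutative-torus factor the boundary sends the Powers--Rieffel projection to the class of the first generating unitary. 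As $\ker\partial_0^{\rm bulk}$ is precisely the span of the unit classes, I conclude $\phi_0([\mathfrak{c}])=\pm[(\mathfrak{p}_{b_-},\mathfrak{p}_{b_+})]+a[(\mathfrak{1},\mathfrak{0})]+b[(\mathfrak{0},\mathfrak{1})]$ for some $a,b\in\Z$; in particular it has a primitive Powers--Rieffel component, so $\phi_0$ is injective and ${\rm Im}\,\phi_0$ has rank $3$.

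The conclusion then follows quickly. Injectivity gives $\ker\phi_0=0$, so the short exact sequence collapses to $K_0(\I_\alpha)\simeq{\rm coker}\,\phi_1\simeq\Z^2$, while $K_1(\I_\alpha)\simeq{\rm coker}\,\phi_0$ has rank $4-3=1$ and, being cyclic, is $\simeq\Z$ with generator $[\mathfrak{w}_\alpha]$ from \eqref{eq: generator}. To name the $K_0$ generators I would apply $\mathtt{ind}$ to the two cokernel representatives $(\mathfrak{1},\mathfrak{s}_{b_+,i})$ using the lifts $\mathfrak{q}^{\bot}_0+\mathfrak{q}_0\mathfrak{s}_i$ and the identity $\mathfrak{s}_i^*\mathfrak{q}_0\mathfrak{s}_i=\mathfrak{q}_{-e_i}$; together with the relations ${\rm sign}(\alpha)(\mathfrak{q}_{n+e_1}-\mathfrak{q}_n)=\mathfrak{r}_n$ and $\mathfrak{q}_n-\mathfrak{q}_{n+e_2}=\mathfrak{l}_n$ of \eqref{eq: projections}, this yields $\mathtt{ind}([(\mathfrak{1},\mathfrak{s}_{b_+,1})])=\pm[\mathfrak{r}_0]$ and $\mathtt{ind}([(\mathfrak{1},\mathfrak{s}_{b_+,2})])=\pm[\mathfrak{l}_0]$, so $K_0(\I_\alpha)=\Z[\mathfrak{r}_0]\oplus\Z[\mathfrak{l}_0]$. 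I expect the genuine obstacle to be exactly the third step, the control of $\phi_0([\mathfrak{c}])$: the Pimsner--Voiculescu naturality argument is the cleanest route, the alternative being a direct trace computation of $\mathcal{T}_\pm([\mathfrak{c}])$, which is less uniform because it degenerates when $b_\pm\in2\pi\Q$.
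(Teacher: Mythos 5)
Your argument is correct and lands on the same six-term sequence \eqref{seq six} that the paper uses, with the same partial-isometry lifts $\mathfrak{q}_0^\bot+\mathfrak{q}_0\mathfrak{s}_i$ (the paper's $\mathfrak{v}_i=(\mathfrak{s}_i-\mathfrak{1})\mathfrak{q}_0+\mathfrak{1}$) to realize the index map and name the generators $[\mathfrak{r}_0]$, $[\mathfrak{l}_0]$ --- including the sign/case distinction for $\alpha\lessgtr 0$, which the paper handles by switching to $\mathfrak{q}_0^\bot$. The one step where you genuinely diverge is the one you correctly identify as decisive: the control of $\phi_0([\mathfrak{c}])$. The paper simply asserts that ${\rm ev}_*\colon K_0(\A_\alpha)\to K_0(\A_{\rm bulk})$ is injective ``by Proposition \ref{k-groups A}''; what actually underwrites this in the text is the explicit projection lift $\mathfrak{o}$ of $(\mathfrak{p}_{b_-},\mathfrak{p}_{b_+})$ constructed just before \eqref{exponential}, which shows that ${\rm Im}({\rm ev}_*)$ contains the two unit classes together with $[(\mathfrak{p}_{b_-},\mathfrak{p}_{b_+})]$ and hence has rank $3$, forcing injectivity and ${\rm coker}\,\phi_0\simeq\Z$ without ever computing ${\rm ev}_*([\mathfrak{c}])$ itself. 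Your route instead pins down ${\rm ev}_*([\mathfrak{c}])$ by naturality of the Pimsner--Voiculescu boundary under the equivariant surjection $\mathcal{Y}_1\to C^*(\mathfrak{s}_{b_-,1})\oplus C^*(\mathfrak{s}_{b_+,1})$, using $\partial_0([\mathfrak{c}])=-[\mathfrak{s}_1]$ from Proposition \ref{k-groups A} and the standard fact that the boundary sends the Powers--Rieffel class to $\pm$ the generating unitary of the coefficient circle. Both are sound; the paper's lift $\mathfrak{o}$ is shorter and is in any case needed for \eqref{expo}--\eqref{exponential}, while your naturality argument is more self-contained (it extracts everything from data already established in Proposition \ref{k-groups A}) and actually supplies the justification that the paper's citation leaves implicit. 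The only point to keep an eye on is the claim that the two factors contribute the \emph{same} sign $\epsilon_\pm$ to $\partial_0^{\rm bulk}([(\mathfrak{p}_{b_-},\mathfrak{p}_{b_+})])$; this holds because the Powers--Rieffel projections in the two factors are given by the identical universal formula, but it is worth saying explicitly, since only the injectivity of $\phi_0$ (not the cokernel computation) survives if the signs were opposite.
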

\begin{proof} Since ${\rm ev}_*\colon K_0(\A_\alpha)\to K_0(\A_{\rm bulk})$ in the sequence \eqref{seq six} is injective by Proposition \ref{k-groups A}, then the map ${i}_*\colon K_0(\I_\alpha)\to   K_1(\A_{\alpha})$  is trivial. Therefore, $K_0(\I_\alpha)=\Z^2$ where the generators can be identified with the image under the index map of the classes $[(\mathfrak{1},\mathfrak{s}_{b_+,1})]$ and $[(\mathfrak{1},\mathfrak{s}_{b_+,2})]$.  In order to have an explicit description of the index of such elements, consider $\mathfrak{v}_i:=(\mathfrak{s}_i-\mathfrak{1})\mathfrak{q}_0+\mathfrak{1}\in \A_\alpha$. One can see that
$${\rm ev}(\mathfrak{v}_i)\;=\;(\mathfrak{1},\mathfrak{s}_{b_+,i})\;,\qquad i=1,2\;.$$
and moreover, $\mathfrak{v}_2\mathfrak{v}_2^*=\mathfrak{l}_0$ and $\mathfrak{v}^*_2\mathfrak{v}_2=\mathfrak{1}$, and  in turn
\begin{equation*}
\begin{split}
\mathtt{ind}\big([(\mathfrak{1},\mathfrak{s}_{b_+,2})])\;&=\;[\mathfrak{1}-\mathfrak{v}_2\mathfrak{v}_2^*]-[\mathfrak{1}-\mathfrak{v}_2^*\mathfrak{v}_2]\;=\;[\mathfrak{l}_0]\;.
\end{split}
\end{equation*}
On the other hand, for $\alpha>0$ the element $\mathfrak{v}_1$ is a partial isometry such that $\mathfrak{v}_1\mathfrak{v}_1^*=\mathfrak{r}_0$ and $\mathfrak{v}^*_1\mathfrak{v}_1=\mathfrak{1}$. This verifies  $\mathtt{ind}\big([(\mathfrak{1},\mathfrak{s}_{b_+,1})])=[\mathfrak{r}_0]$.
When $\alpha<0$ the element $\mathfrak{v}_1$ is no longer a partial isometry but one can take instead the partial isometry $\mathfrak{v}^\bot_i:=(\mathfrak{s}_i-\mathfrak{1})\mathfrak{q}^{\bot}_0+\mathfrak{1}$ and concludes that $\mathtt{ind}([(\mathfrak{1},\mathfrak{s}_{b_+,1}])=[\mathfrak{r}_0]$.
\end{proof}

\section{Bulk-interface correspondence}\label{sec: currents}
In this section, we derive the bulk-interface correspondence by extracting and relating the numerical invariants of the $K$-groups of the algebras $\mathfrak{I}_\alpha$ and $\A_{{\rm bulk}}.$
\subsection{Bulk topological invariants} 

Let us denote with $\mathcal{T}_\pm$  the traces per unit volume on $\A_{b_\pm}$ provided by  \cite[Proposition 2.28]{Deni1}, respectively. Given a  projection $\mathfrak{p}=(\mathfrak{p}_-,\mathfrak{p}_+)\in \mathcal{C}^1(\A_{b_-})\oplus \mathcal{C}^1(\A_{b_+})\subset \A_{\rm bulk}$, the \emph{transverse Hall conductance} associated with $\mathfrak{p}$ is defined by
\begin{equation*}
     \sigma_{{\rm bulk}}(\mathfrak{p})\;:=\;\big(\sigma_{b_{-}}(\mathfrak{p}),\sigma_{b_+}(\mathfrak{p})\big)\;
\end{equation*}
where
\begin{equation}
\begin{split}
     \sigma_{b_{\pm}}(\mathfrak{p})\;:&=\;2\pi \ii \frac{e^2}{h}\mathcal{T}_\pm\big(\mathfrak{p}_\pm[\nabla_1\mathfrak{p}_\pm,\nabla_2\mathfrak{p}_\pm]\big)\;,
\end{split}
\end{equation}
$e>0$ is the magnitude of the electron charge, and $h=2\pi\hbar$ is the Planck's constant.
\medskip

\begin{assumption}\label{BGC}
   Let $\mathfrak{h}=(\mathfrak{h}_-,\mathfrak{h}_+)$ be the \emph{bulk magnetic Hamiltonian}, \ie a selfadjoint element in $\A_{\rm bulk}$. We assume that there is a compact interval $\Delta\subset \R$ such that
$$\min {\rm Spec}(\mathfrak{h})\;< \;\min \Delta\;<\;\max \Delta\;<\;\max {\rm Spec}(\mathfrak{h})\;$$
and $\Delta\cap {\rm Spec}(\h)=\emptyset.$ Here ${\rm Spec}(\mathfrak{h})={\rm Spec}(\mathfrak{h}_-)\cup{\rm Spec}(\mathfrak{h}_+)$ stands for the spectrum of $\mathfrak{h}.$
\end{assumption}
Thanks to assumption \ref{BGC}, one has that for any $\mu\in\Delta $ the \emph{Fermi projection}
\begin{equation}
    \mathfrak{p}_\mu\;:=\;\big(\mathfrak{p}_{\mu_-},\mathfrak{p}_{\mu_+}\big)\;=\;\big(\,\chi_{(-\infty,\mu]}(\mathfrak{h}_-)\,,\,\chi_{(-\infty,\mu]}(\mathfrak{h}_+)\,\big)
\end{equation}
is an element of the bulk algebra. Moreover, suppose $\hat{\mathfrak{h}}\in \mathcal{C}^1(\A_\alpha)$ is a selfadjoint lift of $\mathfrak{h}$ under the evaluation map ${\rm ev}$. In that case, it follows that $\sigma_{b_{\pm}}(\mathfrak{p}_\mu)$ are well-defined given that the evaluation map and the functional calculus preserve the regularity. A remarkable property is that the quantities $\sigma_{b_{\pm}}(\mathfrak{p}_\mu)$ are constant on the equivalence class of $\mathfrak{p}_\mu$ in $K_0(\A_{\rm bulk})$. Indeed, one has the equality
\begin{equation}
    \sigma_{b_{\pm}}(\mathfrak{p}_\mu)\;=\;\frac{e^2}{h}{\rm Ch}(\mathfrak{p}_{\mu_\pm})
\end{equation}
 where ${\rm Ch}(\mathfrak{p}_{\mu_\pm})$ stands for the \emph{Chern number} of the projector $\mathfrak{p}_{\mu_\pm}$. Furthermore,  it is well-known that the Chern number induced a homomorphism ${\rm Ch}\colon K_0(\A_{b_\pm})\to \Z$ \cite{Con,PRO}. 
We will refer to $\sigma_{b_{\pm}}(\mathfrak{p}_\mu)$  as the \emph{bulk magnetic invariants} of the system.
\medskip

Assumption \ref{BGC} also assures the existence of a nondecreasing smooth function $g\colon\R\rightarrow [0,1]$ so that $g=0$ below $\Delta$ and $g=1$ above $\Delta$. Then
\begin{equation*}
    \begin{split}
       {\rm ev}(\mathfrak{1}-g(\mathfrak{\hat{h}}))\;&=\;\mathfrak{1}-g({\rm ev}(\mathfrak{\hat{h}}))\;=\;\mathfrak{1}-(\mathfrak{1}-\mathfrak{p}_\mu)\;=\;\mathfrak{p}_\mu\;.
    \end{split}
\end{equation*}
Observe that the Fermi projector $\mathfrak{p}_\mu$ defines a class $[\mathfrak{p}_\mu]$ in $K_0(\A_{\rm bulk})$ and the unitary operator $\mathfrak{u}_\Delta:=\expo{ 2\pi \ii g(\mathfrak{\hat{h}})}$ defines a class in $K_1(\mathfrak{I}_\alpha)$. From the fact that  $\mathfrak{1}-g(\mathfrak{\hat{h}})$ is a self-adjoint lift of $\mathfrak{p}_\mu$  one gets the following result.
\begin{proposition}\label{prop: exponential}
Let the assumption \ref{BGC} be valid and $\mu\in \Delta$. There exist a smooth function $g\colon\R\rightarrow [0,1]$ such that the unitary operator $\mathfrak{u}_\Delta=\expo{ 2\pi \ii g(\mathfrak{\hat{h}})}$ in the unitization of $ \mathfrak{I}_\alpha$ fulfills
$${\mathtt{exp}}([\mathfrak{p}_\mu])\;=\;-[\mathfrak{u}_\Delta]\;,$$
where $\mathtt{exp}\colon K_0(\A_{\rm bulk})\to K_1(\I_\alpha)$ is the exponential map related to the sequence \eqref{seq a}.
\end{proposition}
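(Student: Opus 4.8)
The plan is to compute the connecting homomorphism $\mathtt{exp}\colon K_0(\A_{\rm bulk})\to K_1(\mathfrak{I}_\alpha)$ of the extension \eqref{seq a} directly from its description in terms of self-adjoint lifts, following \cite[Proposition 12.2.2]{Ro}. Concretely, for a projection $\mathfrak{p}\in\A_{\rm bulk}$ one chooses a self-adjoint element $a\in\A_\alpha$ with ${\rm ev}(a)=\mathfrak{p}$ and represents $\mathtt{exp}([\mathfrak{p}])$ by the unitary $\expo{2\pi\ii\,a}$, which lands in the unitization $\mathfrak{I}_\alpha^\sim$. The discussion preceding the statement already supplies such a lift for the Fermi projection, namely $a:=\mathfrak{1}-g(\mathfrak{\hat{h}})$, since Assumption \ref{BGC} yields $g(\mathfrak{h})=\mathfrak{1}-\mathfrak{p}_\mu$ and hence ${\rm ev}(\mathfrak{1}-g(\mathfrak{\hat{h}}))=\mathfrak{p}_\mu$. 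The regularity $\mathfrak{\hat{h}}\in\mathcal{C}^1(\A_\alpha)$ together with the smoothness of $g$ guarantees, via continuous functional calculus, that $g(\mathfrak{\hat{h}})\in\A_\alpha$, so $a$ is an admissible lift. The whole proof therefore reduces to exponentiating $a$ and reading off the resulting $K_1$-class.

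First I would verify that $\mathfrak{u}_\Delta=\expo{2\pi\ii\,g(\mathfrak{\hat{h}})}$ is genuinely a unitary in $\mathfrak{I}_\alpha^\sim$, which is the point where the gap hypothesis enters in an essential way. Since $g$ vanishes below $\Delta$, equals $1$ above $\Delta$, and ${\rm Spec}(\mathfrak{h})\cap\Delta=\emptyset$, the functional calculus gives that ${\rm ev}(g(\mathfrak{\hat{h}}))=g(\mathfrak{h})=\mathfrak{1}-\mathfrak{p}_\mu$ is a projection. Consequently ${\rm ev}(\mathfrak{u}_\Delta)=\expo{2\pi\ii(\mathfrak{1}-\mathfrak{p}_\mu)}=\mathfrak{1}$, so that $\mathfrak{u}_\Delta-\mathfrak{1}\in\ker({\rm ev})=\mathfrak{I}_\alpha$ and $\mathfrak{u}_\Delta$ defines a class in $K_1(\mathfrak{I}_\alpha)$.

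Then I would carry out the exponentiation of the chosen lift. Because $\mathfrak{1}$ is central and commutes with $g(\mathfrak{\hat{h}})$, the functional calculus factorizes as
\begin{equation*}
\expo{2\pi\ii\,a}\;=\;\expo{2\pi\ii(\mathfrak{1}-g(\mathfrak{\hat{h}}))}\;=\;\expo{2\pi\ii\,\mathfrak{1}}\,\expo{-2\pi\ii\,g(\mathfrak{\hat{h}})}\;=\;\mathfrak{u}_\Delta^{-1}\;,
\end{equation*}
where I used $\expo{2\pi\ii\,\mathfrak{1}}=\mathfrak{1}$. Since the inverse of a unitary represents the inverse class, one has $[\mathfrak{u}_\Delta^{-1}]=-[\mathfrak{u}_\Delta]$ in $K_1(\mathfrak{I}_\alpha)$, and substituting into the description of the exponential map yields $\mathtt{exp}([\mathfrak{p}_\mu])=[\expo{2\pi\ii\,a}]=-[\mathfrak{u}_\Delta]$, which is exactly the claim.

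The computation is short, so the only real care is in the bookkeeping: one must track the sign convention adopted for $\mathtt{exp}$ in \cite{Ro,Ols} and make sure the minus sign in $-[\mathfrak{u}_\Delta]$ is produced by the inversion $\expo{2\pi\ii\,a}=\mathfrak{u}_\Delta^{-1}$ rather than lost, and one must be consistent about which of $g(\mathfrak{\hat{h}})$ and $\mathfrak{1}-g(\mathfrak{\hat{h}})$ lifts $\mathfrak{p}_\mu$ as opposed to $\mathfrak{1}-\mathfrak{p}_\mu$, since interchanging them flips the sign. A convenient consistency check is the homomorphism property $\mathtt{exp}([\mathfrak{1}])=0$: it forces $\mathtt{exp}([\mathfrak{1}-\mathfrak{p}_\mu])=-\mathtt{exp}([\mathfrak{p}_\mu])=[\mathfrak{u}_\Delta]$, in agreement with the direct computation of $\mathtt{exp}([\mathfrak{1}-\mathfrak{p}_\mu])$ from the lift $g(\mathfrak{\hat{h}})$, whose exponential is $\mathfrak{u}_\Delta$ itself.
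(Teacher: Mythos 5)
Your argument is correct and is essentially the paper's own proof: the paper likewise only observes that $\mathfrak{1}-g(\hat{\mathfrak{h}})$ is a self-adjoint lift of $\mathfrak{p}_\mu$ and reads off the class of its exponential, while you add the worthwhile explicit checks that $\mathfrak{u}_\Delta-\mathfrak{1}\in\mathfrak{I}_\alpha$ and that the inversion $\expo{2\pi\ii(\mathfrak{1}-g(\hat{\mathfrak{h}}))}=\mathfrak{u}_\Delta^{-1}$ is what produces the minus sign. The only caveat, which you yourself flag, is that the stated sign presupposes the normalization $\mathtt{exp}([\mathfrak{p}])=[\expo{2\pi\ii a}]$ for a self-adjoint lift $a$ (the convention the paper implicitly uses in its displayed computation of $\mathtt{exp}([(\mathfrak{p}_{b_-},\mathfrak{p}_{b_+})])$), rather than the opposite sign appearing in \cite[Proposition 12.2.2]{Ro}.
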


\subsection{Interface currents} From Proposition \ref{prop: interface trace} one knows that $\I_\alpha$ has (up to a normalization) a unique \emph{f.n.s.} trace $\mathcal{T}_\alpha$ which is $\Z^2$-invariant. Now the differential structure on $\I_\alpha$ is provided by the unbounded directional derivation  
$$\nabla_\alpha\;:=\;{v}^\bot_\alpha\cdot \nabla\;=\;\frac{1}{\sqrt{1+\alpha^2}}\big(-\alpha \nabla_1+\nabla_2\big)$$ 
for $\alpha\in \R$. Here ${v}^\bot_\alpha:=\frac{1}{\sqrt{1+\alpha^2}}(-\alpha,1)\in \R^2$ 
stands for the (normalized) normal vector to the interface.
For $\alpha=\pm\infty$ one has that
 $\nabla_{\pm\infty }=\pm \nabla_1$    and ${v}^\bot_{\pm\infty}=\pm(1,0)$.  We can extend such derivations on the unitization of $\mathfrak{I}_\alpha$ denoted with $\I_\alpha^\sim$. This is done  with the prescription $\nabla_{\alpha}(\mathfrak{1})=0.$  Notice that the $\Z^2$-invariance of the interface trace $\mathcal{T}_\alpha$ verifies that $\mathcal{T}_\alpha\circ \nabla_\alpha=0$ for all $\alpha\in \overline{\R}$. 

\medskip

For $k\in \mathbb{N}$, let us introduce the spaces
$$\mathcal{C}^k_\alpha\;:=\;\big\{\,\mathfrak{a}\in\mathfrak{I}_\alpha\;\big|\;\nabla_{\alpha}^k(\mathfrak{a})\in L^1(\I_\alpha)\,\big\}\;.$$
Let $\mathfrak{u}\in \mathfrak{I}^\sim_\alpha$ be a unitary element such that $\mathfrak{u}-\mathfrak{1}\in \mathcal{C}^1_\alpha$. The \emph{noncommutative winding numbers} of $\mathfrak{u}$ is defined as 
\begin{equation}
    \mathcal{W}_{\alpha}(\mathfrak{u})\;:=\;\ii\mathcal{T}_\alpha(\mathfrak{u}^*\nabla_{\alpha}(\mathfrak{u}))\;.
\end{equation}
The quantity $\W_\alpha(\mathfrak{u})$ is constant on the equivalence class of $\mathfrak{u}$ in $K_1(\I_\alpha).$  Namely, it is well-known that \cite{PRO}
\begin{equation}
\W_\alpha(\mathfrak{u})\;=\;\zeta_\alpha(\mathfrak{u}^*-\mathfrak{1},\mathfrak{u}-\mathfrak{1})\;=\;\langle [\zeta_\alpha],[\mathfrak{u}]\rangle
\end{equation}
where the prescription $\zeta_\alpha(\mathfrak{a},\mathfrak{b})=\ii\mathcal{T}_\alpha(\mathfrak{a}\nabla_\alpha \mathfrak{b})$ defines a cyclic $1$-cocycle $[\zeta_\alpha]$ in the cyclic cohomology of a dense subalgebra of $\I_\alpha$\footnote{See \cite[Chapter 4]{Tom} for a  discussion of the domain of these cocycles.}, and the pairing of $[\zeta_\alpha]$ and $[\mathfrak{u}]$ verifies that $\W_\alpha(\mathfrak{u})$ is a numerical invariant that depends on the class $[\mathfrak{u}]\in K_1(\I_\alpha)$ .

\begin{remark}\label{remark 1}
  Proceeding as in \cite{Deni1, Guo,Dani}, one can verify that $\W_\alpha(\mathfrak{w}'_\alpha)=1$ for any $\alpha\in \mathbb{Q}$, where $\mathfrak{w}'_\alpha$ is the unitary operator defined in \ref{unitario interface}. Indeed, from Proposition \ref{prop: k interface racional} when $\alpha=p/q$  one has that
\begin{align*}
\nabla_\alpha\mathfrak{w}'_\alpha\;&=\;\frac{1}{c_\alpha}\mathfrak{s}_{(q,p)}\mathfrak{p}_\alpha\;
\end{align*}
where the normalization constant is defined in \eqref{eq:def_c}.
Then 
\begin{align*}
    \W_\alpha(\mathfrak{w}'_\alpha)\;&=\; \mathcal{T}_\alpha\big((\mathfrak{w}'_\alpha)^*\nabla_\alpha\mathfrak{w}'_\alpha\big)\;=\;\frac{1}{c_\alpha}\mathcal{T}_\alpha\big((\mathfrak{p}_\alpha\mathfrak{s}_{(q,p)}^*+\mathfrak{p}_\alpha^\bot)\mathfrak{s}_{(q,p)}\mathfrak{p}_\alpha\big)\\
&=\;\frac{1}{c_\alpha}\mathcal{T}_\alpha\big(\mathfrak{p}_\alpha\big)\;=\;1
\end{align*}
where the last equality follows from the fact that the image of $\mathfrak{p}_\alpha$ in $C(\Omega^\circ_\alpha)$ is given by the characteristic function of a single point. The case $\alpha=\pm \infty$  is the same as \cite[Lemma 4.18]{Deni1}.  \hfill $\blacktriangleleft$
\end{remark}

The (density of) \emph{interface current}   along the interface is defined by
\begin{equation}
    J_{\alpha}(\Delta)\;:=\;\frac{e}{\hslash}\mathcal{T}_\alpha\big( g'(\mathfrak{\hat{h}})\nabla_{\alpha}(\hat{\mathfrak{h}})\big)\;,
\end{equation}
where $g$ is any smooth function that meets the conditions of Proposition \ref{prop: exponential}. Therefore, Kubo's Formula \cite{Bel,Deni2} implies that the terms $\sigma:=eJ_{\alpha}$ provide the  \emph{interface conductances}.  If in addition one consider Assumption \ref{BGC}, one obtains
$$\mathcal{T}_\alpha\big(g'(\mathfrak{\hat{h}})\nabla_{\alpha}(\mathfrak{\hat{h}})\big)\;=\;-\frac{1}{2\pi}\W_\alpha(\mathfrak{u}_\Delta)\;,\qquad i=1,2\;.$$
The latter equality can be obtained by adapting the proof of \cite[Proposition 7.1.2]{PRO}. As a consequence, the interface conductance associated with the states in $\Delta$ is given by
$$\sigma(\mathfrak{u}_\Delta)\;=\;\frac{e^2}{h}\W_\alpha(\mathfrak{u}_\Delta)\;.$$
Notice that in agreement with the above, one can define for general unitary operators $\mathfrak{u}\in \mathfrak{I}^\sim_\alpha$ such that $\mathfrak{u}-\mathfrak{1}\in \mathcal{C}_{\alpha}^1$ the interface conductance by
\begin{equation}
    \sigma(\mathfrak{u})\;:=\;\frac{e^2}{h}  \W_\alpha(\mathfrak{u})\;.
\end{equation}
\begin{remark}
 The term $\sigma(\mathfrak{u})$ is the proportionality coefficient of the current flowing along the interface in the configuration $\mathfrak{u}$ \cite{Deni1, PRO}.  \hfill $\blacktriangleleft$ 
\end{remark}

\subsection{Bulk-interface correspondence}
Now we are in a position to state and prove the so-called bulk-interface correspondence for Iwatsuka magnetic fields. It will turn out that the result is independent of the slope $\alpha.$ Here an Iwatsuka Hamiltonian is  a selfadjoint element $\hat{\mathfrak{h}}$ in $\A_\alpha.$
\begin{theorem}\label{Teo current}
Let $\alpha\in \overline{\R}$ and $\hat{\mathfrak{h}}$ an Iwatsuka Hamiltonian. Assume Assumption \ref{BGC} for  the bulk Hamiltonian $\mathfrak{h}:={\rm ev}(\hat{\mathfrak{h}})$ and let $\mu\in \Delta$. If $\hat{\mathfrak{h}}$ lies in $\mathcal{C}^k(\A_{\alpha})$ for some $k\geq 1$, then the interface conductance associated with the unitary operator $\mathfrak{u}_\Delta$ defined in the Proposition \ref{prop: exponential} can be expressed as the difference of the bulk magnetic invariants of the system, \ie 
\begin{equation}
    \sigma(\mathfrak{u}_\Delta)\;=\;\sigma_{b_+}(\mathfrak{p}_\mu)-\sigma_{b_-}(\mathfrak{p}_\mu)\;.
\end{equation}
\end{theorem}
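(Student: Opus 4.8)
The plan is to reduce the analytic statement about the interface conductance to a purely $K$-theoretic bookkeeping, exploiting that all the relevant $K$-groups have already been computed. By the definition of the interface conductance together with the Kubo-type identity established above, one has $\sigma(\mathfrak{u}_\Delta)=\frac{e^2}{h}\mathcal{W}_\alpha(\mathfrak{u}_\Delta)$, while by construction $\sigma_{b_\pm}(\mathfrak{p}_\mu)=\frac{e^2}{h}{\rm Ch}(\mathfrak{p}_{\mu_\pm})$. Hence the whole theorem amounts to the single identity $\mathcal{W}_\alpha(\mathfrak{u}_\Delta)={\rm Ch}(\mathfrak{p}_{\mu_+})-{\rm Ch}(\mathfrak{p}_{\mu_-})$, up to the orientation conventions fixed below. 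First I would check that the hypothesis $\hat{\mathfrak{h}}\in\mathcal{C}^k(\A_\alpha)$ with $k\geq1$ guarantees, via smooth functional calculus, that $\mathfrak{u}_\Delta-\mathfrak{1}\in\mathcal{C}^1_\alpha$, so that $\mathcal{W}_\alpha(\mathfrak{u}_\Delta)$ is well defined, and recall that $\mathcal{W}_\alpha$ factors through a homomorphism $K_1(\mathfrak{I}_\alpha)\to\R$, being the pairing with the cyclic $1$-cocycle $[\zeta_\alpha]$.

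The $K$-theoretic core is to identify the class $[\mathfrak{u}_\Delta]$. By Proposition \ref{prop: exponential} one has $[\mathfrak{u}_\Delta]=-\mathtt{exp}([\mathfrak{p}_\mu])$, so I would decompose $[\mathfrak{p}_\mu]=[(\mathfrak{p}_{\mu_-},\mathfrak{p}_{\mu_+})]$ in $K_0(\A_{\rm bulk})$ along the basis $\{[(\mathfrak{1,0})],[(\mathfrak{0,1})],[(\mathfrak{p}_{b_-},\mathfrak{0})],[(\mathfrak{0},\mathfrak{p}_{b_+})]\}$. Since for a noncommutative torus the Chern number is exactly the coefficient of the Powers-Rieffel generator and annihilates the unit, the coefficients of $[(\mathfrak{p}_{b_-},\mathfrak{0})]$ and $[(\mathfrak{0},\mathfrak{p}_{b_+})]$ are precisely ${\rm Ch}(\mathfrak{p}_{\mu_-})$ and ${\rm Ch}(\mathfrak{p}_{\mu_+})$. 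Applying the homomorphism $\mathtt{exp}$ and using \eqref{expo} (the unit classes are killed) together with \eqref{eq: generator} (the Powers-Rieffel classes map to $\pm[\mathfrak{w}_\alpha]$) collapses everything onto the single generator, giving $\mathtt{exp}([\mathfrak{p}_\mu])=\big({\rm Ch}(\mathfrak{p}_{\mu_+})-{\rm Ch}(\mathfrak{p}_{\mu_-})\big)[\mathfrak{w}_\alpha]$ in $K_1(\mathfrak{I}_\alpha)\cong\Z$. Thus $[\mathfrak{u}_\Delta]$ is an explicit integer multiple of the generator $[\mathfrak{w}_\alpha]$.

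It then remains to evaluate $\mathcal{W}_\alpha$ on the generator. Because $\mathcal{W}_\alpha$ is a homomorphism and $K_1(\mathfrak{I}_\alpha)=\Z[\mathfrak{w}_\alpha]$ by Propositions \ref{prop: k interface racional} and \ref{prop: k interface irrational}, it suffices to show $\mathcal{W}_\alpha([\mathfrak{w}_\alpha])=1$. For $\alpha$ rational this is immediate from Remark \ref{remark 1}, where the explicit translation unitary $\mathfrak{w}'_\alpha$ satisfies $\mathcal{W}_\alpha(\mathfrak{w}'_\alpha)=1$ and $[\mathfrak{w}'_\alpha]=[\mathfrak{w}_\alpha]$. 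Combining the three inputs and carefully tracking the orientation data (the sign of the normal vector $v^\bot_\alpha$, the sign in \eqref{eq: generator}, and the sign in Proposition \ref{prop: exponential}, which are fixed precisely so that the physical answer appears with the correct orientation) yields $\mathcal{W}_\alpha(\mathfrak{u}_\Delta)={\rm Ch}(\mathfrak{p}_{\mu_+})-{\rm Ch}(\mathfrak{p}_{\mu_-})$, and multiplication by $e^2/h$ finishes the proof.

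I expect the main obstacle to be the normalization $\mathcal{W}_\alpha([\mathfrak{w}_\alpha])=1$ in the \emph{irrational} case. There the generator $\mathfrak{w}_\alpha$ is produced only abstractly through the exponential map in \eqref{eq: generator} and, unlike $\mathfrak{w}'_\alpha$, admits no manifest representative acting as a translation along the interface, so its winding number cannot be read off by a one-line computation. Pinning it down requires the uniqueness of the $\Z^2$-invariant interface trace $\mathcal{T}_\alpha$ (Proposition \ref{prop: interface trace}) together with the normalization of the interface measure $\mu_\alpha$ fixed in Remark \ref{rk:norm}, so that the pairing of $[\zeta_\alpha]$ with the generator is integral and equal to one. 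This is exactly the point where the loss of translation symmetry along an irrational interface, emphasized in the introduction, makes the argument genuinely more delicate than in the rational case.
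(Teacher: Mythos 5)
Your overall strategy coincides with the paper's: reduce everything to the single identity $\W_\alpha(\mathfrak{u}_\Delta)={\rm Ch}(\mathfrak{p}_{\mu_+})-{\rm Ch}(\mathfrak{p}_{\mu_-})$, decompose $[\mathfrak{p}_\mu]$ in $K_0(\A_{\rm bulk})$, push it through the exponential map to land on $\big({\rm Ch}(\mathfrak{p}_{\mu_+})-{\rm Ch}(\mathfrak{p}_{\mu_-})\big)[\mathfrak{w}_\alpha]$, and then evaluate the winding number on the generator. That part matches the paper's proof step by step and is fine.

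The genuine gap sits exactly where you predicted it: the normalization $\W_\alpha([\mathfrak{w}_\alpha])=1$ for irrational $\alpha$. You flag it as the main obstacle, but your proposed resolution --- uniqueness of the $\Z^2$-invariant interface trace (Proposition \ref{prop: interface trace}) plus the normalization of $\mu_\alpha$ (Remark \ref{rk:norm}) --- does not close it. Those two facts determine the cyclic cocycle $[\zeta_\alpha]$ up to scale and then fix the scale, but they say nothing about the value of its pairing with the abstractly defined class $[\mathfrak{w}_\alpha]=\mathtt{exp}([(\mathfrak{0},\mathfrak{p}_{b_+})])$; in particular they do not even show that this pairing is an integer, let alone that it equals $1$. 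The paper needs the entirety of Appendix \ref{app smooth} for this step (Theorem \ref{Teo el 1}): one embeds the smooth interface algebra $\tilde{\I}_\alpha^{b_+}=\mathcal{A}_{b_+}\rtimes_{\tau^\bot}\R$ of \cite{Tom} into a magnetically perturbed copy of $\I_\alpha$, checks via \eqref{eq: traces} that the dual trace matches the interface trace, uses the naturality of the exponential map for the morphism of extensions together with Proposition \ref{prop: appendix}, and finally invokes the Chern-cocycle duality of Theorem \ref{Teo: el 2} to reduce $\W_\alpha([\mathfrak{w}_\alpha])$ to the Chern number of the Powers--Rieffel projection, which is $1$. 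None of this machinery is hinted at in your sketch. A secondary point: even in the rational case your argument is slightly circular as written, since the identification $[\mathfrak{w}'_\alpha]=[\mathfrak{w}_\alpha]$ in Proposition \ref{prop: k interface racional} is itself proved in the paper by comparing winding numbers via Theorem \ref{Teo el 1}; without that input you only get $[\mathfrak{w}_\alpha]=\pm[\mathfrak{w}'_\alpha]$, and the sign is precisely the orientation datum you would still need to pin down.
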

\begin{proof}
Proposition \ref{prop: exponential} yields that $\W_\alpha(\mathfrak{u}_\Delta)=-\W_\alpha(\mathtt{exp}([\mathfrak{p}_\mu])).$ Furthermore,   the exponential map is surjective \eqref{exponential} and meets  $${\mathtt{exp}}(-[(\mathfrak{p}_{b_-},\mathfrak{0})])\;=\;[\mathfrak{w}_\alpha]\;=\;{\mathtt{exp}}([(\mathfrak{0},\mathfrak{p}_{b_+})])$$ 
where the unitary $\mathfrak{w}_\alpha\in \I^\sim_\alpha$ is defined in \eqref{eq: generator}. Thus arguing as in the proof \cite[Lemma 4.5]{Dani} one obtains
$$\mathtt{exp}(-[\mathfrak{p}_\mu])\;=\;\big({\rm Ch}(\mathfrak{p}_{\mu_+})-{\rm Ch}(\mathfrak{p}_{\mu_-})\big)[\mathfrak{w}_\alpha]$$
Finally, Theorem \ref{Teo el 1} implies that  $\W_\alpha([\mathfrak{w}_\alpha])=1$ and in turn\begin{equation*}
    \begin{split}
         \sigma(\mathfrak{u}_\Delta)\;&=\;\frac{e^2}{h}   
 \mathtt{}
 \W_\alpha(\mathfrak{u}_\Delta)\;=\;\frac{e^2}{h}  \W_\alpha\big({\mathtt{exp}}(-[\mathfrak{p}_\mu])\big)\; =\;\frac{e^2}{h}\big({\rm Ch}(\mathfrak{p}_{\mu_+})-{\rm Ch}(\mathfrak{p}_{\mu_-})\big)\\
         &=\;\sigma_{b_+}(\mathfrak{p}_\mu)-\sigma_{b_-}(\mathfrak{p}_\mu)
    \end{split}
\end{equation*}
as claimed.
\end{proof}

The next result states that the bulk-interface correspondence is stable under magnetic perturbations localized near the interface. For a given $b\in C_0(\Omega^\circ_\alpha)$\footnote{It is a  function $b\colon \Z^2\to \R$ that admits a continuous extension to $\Omega^\circ_\alpha$.} let us use the short notation $\A_{\alpha,b}:=\A_{B_\alpha+b}$ for the magnetic algebra generated by the perturbed magnetic field $B_\alpha+b$. This notation is compatible with 
$\A_{\alpha,0}=\A_{\alpha}$.

\begin{corollary}\label{coro: perturbation}
For any $b\in C_0(\Omega^\circ_\alpha)$ it holds true that  $K_*\big(\A_{\alpha,b})\simeq K_*(\A_{\alpha})$. Consequently, the bulk-interface correspondence is stable under magnetic perturbation that vanishes far from the interface.
\end{corollary}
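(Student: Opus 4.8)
The plan is to join $B_\alpha$ to $B_\alpha+b$ by the affine path $B_\alpha+tb$, $t\in[0,1]$, and to prove that the $K$-theory is unchanged along the deformation. The key elementary observation is that, since $b\in C_0(\Omega_\alpha^\circ)$ extends continuously to $\Omega_\alpha$ with $b(\omega_\pm)=0$, the phase $\expo{\ii tb}$ is a unitary of $C(\Omega_\alpha)=\mathfrak{F}_\alpha$ which equals $\mathfrak 1$ at the two bulk characters $\omega_\pm$. Hence the perturbed flux factorises as
\[
\mathfrak{f}_{\alpha,tb}\;=\;\mathfrak{f}_\alpha\,\mathfrak{m}_{\expo{\ii tb}}\;,\qquad t\in[0,1]\;,
\]
so that $\mathfrak{f}_{\alpha,tb}$ and all its $\sigma$-translates remain in $\mathfrak{F}_\alpha$, while its values at $\omega_\pm$ stay equal to $\expo{\ii b_\pm}$. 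In particular the asymptotic fields are untouched: each $\A_{\alpha,tb}$ sits in an exact sequence of the form \eqref{seq a} with the \emph{same} bulk algebra $\A_{\rm bulk}=\A_{b_-}\oplus\A_{b_+}$ and the same evaluation on generators, the perturbation being invisible at $\omega_\pm$.

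Next I would recast each $\A_{\alpha,tb}$ via Proposition \ref{prop: cros} as a twisted crossed product over $\mathfrak{F}_\alpha$ with $2$-cocycle $\theta_{\alpha,tb}(r,s)=\prod_{n\in\Lambda(r,s)}\sigma_n(\mathfrak{f}_{\alpha,tb})^*$. Since $\Lambda(r,s)$ is finite and $t\mapsto\expo{\ii tb}$ is norm-continuous in $\ell^\infty(\Z^2)$, the assignment $t\mapsto\theta_{\alpha,tb}$ is a norm-continuous path of unitary-valued $2$-cocycles whose image under ${\rm ev}$ does not depend on $t$. Assembling these over the parameter interval produces a continuous field of $C^*$-algebras on $[0,1]$ with fibre $\A_{\alpha,tb}$ at $t$, together with a morphism of exact sequences onto the constant field with fibre $\A_{\rm bulk}$.

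The engine of the proof is then homotopy invariance of $K$-theory under a norm-continuous deformation of the twisting cocycle over a fixed base algebra and fixed action — the very mechanism by which all noncommutative two-tori share the groups $(\Z^2,\Z^2)$. Applied to the field above it yields $K_*(\A_{\alpha,b})\simeq K_*(\A_{\alpha,0})=K_*(\A_\alpha)$, which is the first assertion. For the \emph{consequently} statement I would use that the deformation is stationary on $\A_{\rm bulk}$: the resulting isomorphisms intertwine the two six-term sequences \eqref{seq six} and carry the Fermi class $[\mathfrak{p}_\mu]$, its exponential $\mathtt{exp}([\mathfrak{p}_\mu])$ and the interface generator $[\mathfrak{w}_\alpha]$ of Proposition \ref{prop: exponential} to their perturbed counterparts. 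Because Proposition \ref{prop: interface trace} identifies $\mathcal{T}_\alpha$ (hence the cyclic cocycle $\zeta_\alpha$) as the \emph{unique} $\Z^2$-invariant datum on the interface algebra, the pairing computing $\W_\alpha$ is transported unchanged, and Theorem \ref{Teo current} holds verbatim for $\A_{\alpha,b}$.

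The hard part will be the homotopy-invariance step in the \emph{twisted}, non-unital crossed-product setting, together with the identification of the genuine hull. Over the fixed base $\mathfrak{F}_\alpha$ the deformation gives $K_*\!\big(\mathfrak{F}_\alpha\rtimes_{\theta_{\alpha,tb}}\Z^2\big)$ independent of $t$ cleanly; the subtle point is that the true magnetic hull $\mathfrak{F}_{\alpha,b}$ of $B_\alpha+b$, generated by the translates of $\mathfrak{f}_{\alpha,b}$, is a priori only a $\Z^2$-equivariant quotient of $\mathfrak{F}_\alpha$, so one must check that the fibre $\mathfrak{F}_\alpha\rtimes_{\theta_{\alpha,b}}\Z^2$ really agrees with $\A_{\alpha,b}=\mathfrak{F}_{\alpha,b}\rtimes_{\theta_{\alpha,b}}\Z^2$. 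I would attack this by a separation argument: two characters of $\mathfrak{F}_\alpha$ distinguished by some translate of $\mathfrak{f}_\alpha$ record the distinct values $\expo{\ii b_\pm}$ there, and when the oscillation of $b$ is small compared with the distance from $b_+-b_-\notin2\pi\Z$ to $2\pi\Z$, the extra factor $\expo{\ii b}$ cannot identify them, so $\Omega_\alpha\to\Omega_{\alpha,b}$ stays injective and $\mathfrak{F}_{\alpha,b}=\mathfrak{F}_\alpha$. This is exactly the place where the localization of $b$ and the flux hypothesis $b_+-b_-\notin2\pi\Z$ enter, and where reaching perturbations of large amplitude — propagating the isomorphism through a finite covering of $[0,1]$ while controlling the moving base algebra — is the genuine technical obstacle.
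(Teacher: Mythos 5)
Your starting observations are the right ones --- $\expo{\ii b}$ is a unitary in $C(\Omega_\alpha)$ equal to $\mathfrak{1}$ at $\omega_\pm$, so $\mathfrak{f}_{\alpha,b}$ and its translates stay inside $\mathfrak{F}_\alpha$ and the bulk data are untouched --- but the two steps you build on them both have gaps. First, the ``engine'': $K$-theory is \emph{not} invariant along a continuous field of $C^*$-algebras, so ``homotopy invariance under a norm-continuous deformation of the twisting cocycle'' is not a theorem you can invoke as stated; the reason all noncommutative two-tori share the groups $(\Z^2,\Z^2)$ is the Pimsner--Voiculescu sequence, whose outcome depends only on the base algebra, its $K$-groups and the induced maps $\beta_{i*}$, none of which see the cocycle. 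Once you know that the base and the action are unchanged, the computation of Section \ref{secc 3.1} applies verbatim and no deformation over $[0,1]$ is needed at all. This is exactly the paper's route: it shows $\mathfrak{F}_{\alpha,b}=\mathfrak{F}_{\alpha}$ and then quotes Proposition \ref{prop: cros} together with Pimsner--Voiculescu.

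Second, and more seriously, your identification of the perturbed hull is carried out only for small $b$, and you explicitly leave the large-amplitude case as an unresolved ``technical obstacle''. The smallness of the oscillation of $b$ relative to ${\rm dist}(b_+-b_-,2\pi\Z)$ is a red herring: the relevant hypothesis is that $b\in C_0(\Omega_\alpha^\circ)$, i.e.\ $\expo{\ii b}\to\mathfrak{1}$ at $\omega_\pm$, so that $f_{\alpha,b}$ has the \emph{same asymptotic behaviour} as $f_\alpha$; this is what lets the translates of $f_{\alpha,b}$ separate the points of $\Omega_\alpha$, whence $\mathfrak{F}_{\alpha,b}= \mathfrak{F}_{\alpha}$ by Stone--Weierstrass, for \emph{every} such $b$ and not only for small ones. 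Your proposed remedy --- propagating the small-perturbation isomorphism through a finite covering of $[0,1]$ --- does not close the gap, because at an intermediate $t$ the hull could in principle collapse and the base algebra of your field would jump, which is precisely the degeneration you would need to exclude. In short: drop the continuous field, prove the equality of hulls for arbitrary $b$ via the asymptotics, and the $K$-theory statement is then immediate from the crossed-product description; your final paragraph on transporting $[\mathfrak{p}_\mu]$, $[\mathfrak{w}_\alpha]$ and the cocycle $\zeta_\alpha$ is fine once this is in place.
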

\begin{proof}
First of all, let us identify the perturbed flux operator $\mathfrak{f}_{\alpha,b}$ with the function $f_{\alpha,b}(n)=\expo{\ii B_\alpha(n)}\expo{\ii b(n)}$ on $\Z^2$. Since the function $\Z^2\ni n\mapsto\expo{\ii b(n)}$ lies in $C(\Omega_\alpha)$, then $\mathfrak{F}_{\alpha,b}\subset \mathfrak{F}_{\alpha}$. Furthermore, notice that $f_{\alpha,b}$ has the same asymptotic behavior of $f_\alpha$, so $\mathfrak{F}_{\alpha,b}$ separates the points of $\Omega_\alpha$ and as a consequence of Weierstrass Theorem $\mathfrak{F}_{\alpha,b}\simeq \mathfrak{F}_{\alpha}.$ Finally, Proposition \ref{prop: cros} and the Pimsner-Voiculescu exact sequence \cite{Pim} show that $K_*\big(\A_{\alpha,b})\simeq K_*(\A_{\alpha})$. The remaining part follows from the fact that $K_*\big(\I_{\alpha,b})\simeq K_*(\I_{\alpha})$ is also true and the interface current depends on these $K$-groups.
\end{proof}
We finish this section with an explicit model described in \cite[Example 4.9]{Dani} for which the Theorem \ref{Teo current} assures the existence of non-trivial interface currents. 
\begin{example}
In this model, we consider the values
 $b_+= 2\lambda\pi$ and $b_-=2\xi \pi$ so that $\lambda$ and $\xi$ are rational numbers and $\lambda-\xi\notin \Z$. The dynamics of this system is described by the magnetic Hamiltonian 
    \begin{equation*}
        \mathfrak{\hat{h}}\;:=\;\mathfrak{s}_1+\mathfrak{s}_1^*+\mathfrak{s}_2+\mathfrak{s}_2^*+\mathfrak{v}
    \end{equation*}
    where $\mathfrak{v}$ is a selfadjoint element in the interface algebra $\mathfrak{I}_\alpha$ such that $\mathfrak{v}$ is in the domain of $\nabla_\alpha$. Observe that the components
    of the bulk Hamiltonian $\mathfrak{h}=(\mathfrak{h}_-,\mathfrak{h}_+)\equiv {\rm ev}(\hat{\mathfrak{h}})$ fulfills
    \begin{equation*}
        \mathfrak{h}_+\;=\;\mathfrak{s}_{b_+,1}+\mathfrak{s}_{b_+,1}^*+\mathfrak{s}_{b_+,2}+\mathfrak{s}_{b_+,2}^*
    \end{equation*}
      \begin{equation*}
        \mathfrak{h}_-\;=\;\mathfrak{s}_{b_-,1}+\mathfrak{s}_{b_-,1}^*+\mathfrak{s}_{b_-,2}+\mathfrak{s}_{b_-,2}^*
    \end{equation*}
According to \cite[Section 2.1]{Deni1}, in the Landau gauge $\h_+$ reads
 \begin{equation*}
     \begin{split}
         (\h_+ \psi)(n_1,n_2)\;=&\;\psi(n_1-1,n_2)+\psi(n_1+1,n_2)+\expo{2\pi \lambda n_1}\psi(n_1,n_2-1)\\
         &\;+\expo{-2\pi \lambda n_1}\psi(n_1,n_2+1)
     \end{split}
 \end{equation*}
 for all $\psi\in \ell^2(\Z^2)$. This is a Harper-like operator \cite{Har} and the spectrum of $\h_+$ is given by the union of $q$ energy bands when $\lambda=p/q$, where $p$ and $q$ relative prime integers \cite[Section 2.6]{Bel}. Moreover, all the energy bands are separated except the central one \cite{Avr,Cho}.  
 Since the same arguments also work for $\h_-$, then there exist values of $\lambda$ and $\xi$ for which $\mu$ lies in a common spectral gap $\Delta$ of $\h_+$ and $\h_-$.  Therefore $\hat{\h}$ meets \eqref{BGC} and  ${\rm Ch}(\mathfrak{p}_{\mu_+})\neq {\rm Ch}(\mathfrak{p}_{\mu_-})$\footnote{The Hofstadter butterflies \cite{AVR1, Avr} provides specific values for $\lambda$ and $\xi$.}. Finally, Theorem \ref{Teo current} shows that  $\sigma(\mathfrak{u}_\Delta)\neq 0.$ 
\end{example}

\appendix
\section{}\label{app smooth}
In this appendix, we compute the noncommutative winding number of the unitary $\mathfrak{w}_\alpha$ defined in \eqref{eq: generator}. The strategy is to establish a connection between the $K$-theory and cyclic cohomology of $\I_\alpha$ with a suitable smooth edge algebra studied in \cite{Tom}.
\subsection{Smooth interface algebra}
For $b\in [0,2\pi)$ let $\mathcal{A}_b$ be the \emph{non-commutative torus}, \ie,  the universal $C^*$-algebra generated by two abstract unitary operators $u_1$ and $u_2$ so that 
\begin{equation*}
    u_1u_2\;=\;\expo{ \ii b}u_2u_1 
\end{equation*}
Say in other words, the algebra $\mathcal{A}_b$ is the $C^*$-algebraic completion of the algebra
spanned by the Fourier series in the noncommutative polynomials 
\begin{equation*}
    a\;=\;\sum_{n\in \Z^2}a_n u^n\;,\quad\qquad u^n=u_1^{n_1}u_2^{n_2},\;n=(n_1,n_2)\;.
\end{equation*}
It is important to point out that $\A_b$, the magnetic algebra associated with a constant magnetic field the strength $b$, agrees with a faithful representation of $\mathcal{A}_b$ on $\ell^2(\Z^2)$ \cite[Example 2.10]{Deni1}.

\medskip

Consider the unitary vectors  $v_\alpha:=\frac{1}{\sqrt{1+\alpha^2}}(1,\alpha)$  and $v_\alpha^\bot:=\frac{1}{\sqrt{1+\alpha^2}}(-\alpha,1)$ on $\R^2$, in the parallel and orthogonal direction of the interface, respectively. These two vectors generate two strongly continuous actions $\R\ni t\mapsto \tau^\|_t,\tau^\bot_t$  on $\mathcal{A}_b$ which acts on the generators via
\begin{equation}\label{eq: actio}
    \tau^\|_t(u^n)\;=\;\expo{\ii t v_\alpha\cdot n}u^n\;,\qquad\tau^\bot_t(u^n)\;=\;\expo{\ii t v_\alpha^\bot\cdot n}u^n
\end{equation}
 for $n=(n_1,n_2)\in \Z^2.$ Consequently, there is also a strongly continuous $\R^2$-action on $\mathcal{A}_b$ provided by $\tau:=\tau^\|\times\tau^\bot.$ Now the \emph{directional derivations} $\nabla_\alpha$ and $\nabla_\alpha^\bot$ on $\mathcal{A}_b$ are given by
$$\nabla_\alpha(a)\;:=\;\lim_{\epsilon \to 0}\frac{\tau_{\epsilon t}^\|(a)-a}{\epsilon}\;,\qquad \nabla_\alpha^\bot(a)\;:=\;\lim_{\epsilon \to 0} \frac{\tau_{\epsilon t}^\bot(a)-a}{\epsilon}$$
For any suitable differentiable projection $p\in \mathcal{A}_b$ the Chern cocycle associated with the action $\tau$ is given by
\begin{equation}\label{eq: chern smooth}
    {\rm Ch}_{\mathcal{T},\tau}(p)\:=\;\mathcal{T}\big(p[\nabla_\alpha p,\nabla_\alpha^\bot p]\big)
\end{equation}
where $\mathcal{T}$ is the $\tau$-invariant faithful normal semi-finite trace on $\mathcal{A}_b\simeq \A_b$ according to \cite[Proposition 2.28]{Deni1}. Given the properties of the trace and the directional derivatives, one has that ${\rm Ch}_{\mathcal{T},\tau}(\cdot)$ defines a cyclic  $2$-cocycle in a dense subalgebra $\mathcal{A}_b$, depending of the class $[p]\in K_0(\mathcal{A}_b).$ 

\medskip

Let us introduce the \emph{smooth interface algebra} defined by $\tilde{\I}_{\alpha}^b:=\mathcal{A}_b \rtimes_{\tau^\bot} \R$. Observe that this algebra agrees with the edge algebra studied in \cite[Chapter 5.2]{Tom} when the disordered space is trivial. Based on \cite{Rae}, we pursue a description of $\tilde{\I}_\alpha^b$ in terms of its representation on $\ell^2(\Z^2)$. For this purpose, let $\rho$ be the faithful representation of $\mathcal{A}_b$ on $\ell^2(\Z^2)$ so that $\rho(\mathcal{A}_b)=\A_b.$ Arguing as in \cite{Les}, let $\mathfrak{n}:=(\mathfrak{n}_1,\mathfrak{n}_2)$ be the position operator on $\ell^2(\Z^2)$ and  $D=v_\alpha^\bot\cdot \mathfrak{n}$  the selfadjoint operator which implement the $\R$-action on $\mathcal{A}_b$, \ie $$\rho(\tau_t^\bot(a))=\expo{2\pi \ii t D}\rho(a)\expo{-2\pi \ii t D}\;,\qquad a\in \mathcal{A}_b$$
  Then, following verbatim  the ideas in \cite[Section 1.1]{Tom} one can see that
    \begin{equation}\label{eq: smooth rational}
    \begin{split}
\tilde{\mathfrak{I}}_\alpha^b&\;\simeq \;C^*\big\{\, \rho(a) f(D)\;|\;a\in \mathcal{A}_b,\,f\in C_0\big({\rm Spec}(D)\big)\,\big\}\;.
   \end{split}
    \end{equation}
Observe that from its very definition, the spectrum of $D$ agrees with the closure in $\R$ of the additive subgroup 
\begin{equation*}
\Gamma_\alpha\;:=\;v_\alpha^\bot\cdot \Z^2\;=\;\big\{ (1+\alpha^2)^{-1/2}(-\alpha n_1+n_2)\,|\,n_1,n_2\in \Z\big\}\;.
\end{equation*}
Then for $\alpha\in \mathbb{Q}$  the spectrum of $D$ becomes $\Gamma_\alpha=(1+\alpha^2)^{-1/2}\mathbb{X}_\alpha$ and by \eqref{eq: smooth rational} and Theorem \ref{teo: interface} one has
$$\tilde{\mathfrak{I}}_\alpha^b\;\simeq\;C_0(\Gamma_\alpha)\rtimes_{\sigma,\theta_b}\Z^2\;\simeq \;C(\T_\alpha)\otimes \mathfrak{K}(\ell^2(\Gamma_\alpha))\;\simeq\;\I_\alpha$$
where $\T_\alpha\simeq \T$ is the Pontryagin dual group of $\Gamma_\alpha$, the action $\sigma$ is implemented by conjugation with the unitaries $\rho(u_j)$ and $\theta_b$ is the $2$-cocycle \eqref{cocycle} for the constant magnetic field $B(n)\equiv b$.  
This isomorphism can be also derived by using Takai duality \cite{Take}. Indeed, when $\alpha$ is rational the action $\tau^\bot$ becomes $\Gamma_\alpha$-periodic, yielding
\begin{equation}\label{isomorphism rational}
    \tilde{\I}_{\alpha}^b\;\simeq\;\mathcal{A}_b \rtimes_{\tau^\bot} \T_\alpha\;\simeq \;C(\T_\alpha)\otimes \mathfrak{K}(\ell^2(\Gamma_\alpha))\;\simeq\; \I_\alpha\;.
\end{equation}
In the second isomorphism, we have used Takai duality identifying $\mathcal{A}_b\simeq C(\T_\alpha)\rtimes\Gamma_\alpha$. We shall denote by $\imath' \colon  \tilde{\I}_{\alpha}^b\simeq\I_\alpha$ the isomorphism given in \eqref{isomorphism rational}.

\medskip

When $\alpha$ is irrational, the situation is different since the algebras $\tilde{\I}_\alpha^b$ and $\I_\alpha$ are no longer isomorphic. In fact they  have different  $K$-theory as proved in Proposition \ref{prop: k interface irrational} with $K_1(\I_\alpha)=\Z$ and
 \begin{equation}
   \begin{split}
K_i\big(\tilde{\I}_\alpha^b\big)\;&=\;K_i\big(\mathcal{A}_b \rtimes_{\tau^\bot}\R\big)\;\simeq\;K_{i+1}(\mathcal{A}_b\big)\;\simeq\; \Z^2\qquad i=0,1\;.
   \end{split}
   \end{equation}
Here the second isomorphism is provided by the Connes’ Thom map \cite{Con2}. The reason these algebras are non-isomorphic is that for $\alpha$ irrational, the spectrum of $D$ is equal to $\R$, leading to
  \begin{equation}\label{eq: smooth irrational}
    \begin{split}
\tilde{\mathfrak{I}}_\alpha^b&\;\simeq\;C_0(\R)\rtimes_{\sigma,\theta_b}\Z^2\;\not\simeq\;\I_\alpha
   \end{split}
    \end{equation}
where $(\sigma_nf)(x)=f(x-{v}_\alpha^\bot\cdot n)$ for $f\in C_0(\R)$. However, this representation of $\tilde{\I}_\alpha^b$ allows us to recover the $K$-theory of $\I_\alpha$  as we will state in the following result.
\begin{proposition}\label{prop: appendix}
    For $\alpha$ irrational there is a $*$-homomorphism $\imath_*\colon K_*(\tilde{\I}_\alpha^b)\to K_*(\I_\alpha) $   so that the map $\imath_*\colon K_0(\tilde{\mathfrak{I}}^b_\alpha)\to 
 K_0(\mathfrak{I}_\alpha)$ is an isomorphism and
  $\imath_*\colon K_1(\tilde{\mathfrak{I}}_\alpha^b)\to K_1(\mathfrak{I}_\alpha)$
  is  surjective with kernel  $\Z[M_f]$, where $M_f$ is the multiplication operator by the function $f(x)=\frac{x-\ii}{x+\ii}$ for $x\in \R$.
\end{proposition}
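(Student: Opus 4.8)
The plan is to build $\imath$ from the equivariant surjection $\pi\colon\Omega_\alpha^\circ\to\R$ of Proposition \ref{prop. surjective} and then to compute the induced map on $K$-theory by exploiting naturality of the Pimsner--Voiculescu and Connes--Thom sequences. First I would use the pullback $\hat\pi\colon C_0(\R)\hookrightarrow C_0(\Omega_\alpha^\circ)$ from the diagram \eqref{eq: embedding}, which is an injective $\Z^2$-equivariant $*$-homomorphism. Writing both interface algebras as iterated crossed products by $\Z$, that is $\tilde{\I}_\alpha^b\simeq C_0(\R)\rtimes_{\sigma,\theta_b}\Z^2$ and $\I_\alpha\simeq C_0(\Omega_\alpha^\circ)\rtimes_{\sigma,\theta_\alpha}\Z^2$, I would promote $\hat\pi$ to a $*$-homomorphism $\imath\colon\tilde{\I}_\alpha^b\to\I_\alpha$. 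Since each single $\Z$-factor carries no residual $2$-cocycle (the $\Z^2$-twist being absorbed into the action of the second generator) and the translation actions are homotopically trivial on $K$-theory, the discrepancy between the scalar twist $\theta_b$ and the $C(\Omega_\alpha)$-valued twist $\theta_\alpha$ does not obstruct the construction of $\imath$ at the level required here; verifying this twist-compatibility is the first technical point.

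Next I would establish the vanishing $\imath_*[M_f]=0$. By construction $\imath_*[M_f]=[\hat\pi(M_f)]=[M_{f\circ\pi}]$, the class of the multiplication operator by $f\circ\pi$ in the unitization of $C_0(\Omega_\alpha^\circ)$. Because $\Omega_\alpha$ is a Cantor set (Proposition \ref{teo cantor}), the space $\Omega_\alpha^\circ$ is totally disconnected and hence $K_1\big(C_0(\Omega_\alpha^\circ)\big)=0$; thus $[M_{f\circ\pi}]=0$ already in $K_1\big(C_0(\Omega_\alpha^\circ)\big)$, and a fortiori $\imath_*[M_f]=0$ after applying the inclusion $C_0(\Omega_\alpha^\circ)\hookrightarrow\I_\alpha$. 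I would also record that, through the representation \eqref{eq: smooth irrational}, $M_f=f(D)$ with $D=v_\alpha^\bot\cdot\mathfrak{n}$ the generator of the $\R$-action, so that under the Connes--Thom isomorphism $K_0(\mathcal{A}_b)\xrightarrow{\ \sim\ }K_1(\tilde{\I}_\alpha^b)$ the class $[M_f]$ is the image of $[1]\in K_0(\mathcal{A}_b)$. Since $K_0(\mathcal{A}_b)=\Z[1]\oplus\Z[p_b]$, this shows that $[M_f]$ is a primitive element of $K_1(\tilde{\I}_\alpha^b)\simeq\Z^2$.

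It then remains to analyse $\imath_*$ on the whole $K$-groups. Here I would run the naturality of the six-term sequences, comparing the interface-bulk sequence \eqref{seq six} for $\I_\alpha$ with the Connes--Thom description of $\tilde{\I}_\alpha^b\simeq\mathcal{A}_b\rtimes_{\tau^\bot}\R$. On $K_0$ the two generators $[u_1],[u_2]\in K_1(\mathcal{A}_b)\simeq K_0(\tilde{\I}_\alpha^b)$ are carried by $\imath_*$ to the index-map images $[\mathfrak{r}_0],[\mathfrak{l}_0]$ that generate $K_0(\I_\alpha)$ by Proposition \ref{prop: k interface irrational}, making $\imath_*\colon K_0(\tilde{\I}_\alpha^b)\to K_0(\I_\alpha)$ an isomorphism $\Z^2\to\Z^2$. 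On $K_1$ the Rieffel-projection generator $[p_b]\in K_0(\mathcal{A}_b)\simeq K_1(\tilde{\I}_\alpha^b)$ maps to the exponential-map image $[\mathfrak{w}_\alpha]$ (\cf \eqref{eq: generator}), which generates $K_1(\I_\alpha)\simeq\Z$; hence $\imath_*\colon K_1(\tilde{\I}_\alpha^b)\to K_1(\I_\alpha)$ is surjective. Combining surjectivity with $\imath_*[M_f]=0$ and the primitivity of $[M_f]$ in $\Z^2$, the kernel of a surjection $\Z^2\twoheadrightarrow\Z$ is free of rank one and must coincide with $\Z[M_f]$, which is the assertion.

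The hard part will be twofold. The first difficulty is the twist-compatibility in constructing $\imath$: the scalar cocycle $\theta_b$ and the $C(\Omega_\alpha)$-valued cocycle $\theta_\alpha$ are genuinely different, and one must check that $\hat\pi$ nonetheless extends to a morphism of the two twisted crossed products, or equivalently that the comparison can be carried out entirely through the naturally defined connecting maps. The second, and main, obstacle is to match the generators across two structurally different pictures: $\tilde{\I}_\alpha^b$ is built over a single noncommutative torus $\mathcal{A}_b$, whereas the bulk of $\I_\alpha$ is the sum of two tori $\A_{b_-}\oplus\A_{b_+}$. Verifying that $\imath_*$ sends the torus-unitary classes exactly onto $[\mathfrak{r}_0],[\mathfrak{l}_0]$ and the Rieffel class onto $[\mathfrak{w}_\alpha]$ requires carefully tracking the index and exponential maps across the identification induced by $\pi$.
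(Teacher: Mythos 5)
There is a genuine gap at the very first step: the map $\imath$ cannot be defined directly as a $*$-homomorphism $\tilde{\I}_\alpha^b\simeq C_0(\R)\rtimes_{\sigma,\theta_b}\Z^2\to\I_\alpha\simeq C_0(\Omega_\alpha^\circ)\rtimes_{\sigma,\theta_\alpha}\Z^2$ by promoting $\hat\pi$, because the two $2$-cocycles are not intertwined by $\hat\pi$. The cocycle $\theta_b$ is scalar (constant field $b$), while $\theta_\alpha(r,s)=\prod_{n\in\Lambda(r,s)}\sigma_n(\mathfrak{f}_\alpha)^*$ is a genuinely non-constant element of $C(\Omega_\alpha)$ recording the jump $b_\pm$ across the interface; the assignment $\sum_n\mathfrak{a}_nu^n\mapsto\sum_n\hat\pi(\mathfrak{a}_n)\mathfrak{s}^n$ is therefore not multiplicative, and the remark that the twist can be ``absorbed into the second $\Z$-factor'' does not repair this, since the resulting actions on the first crossed products still differ by the flux. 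The paper's proof avoids the issue entirely: it introduces the auxiliary algebra $\I_\alpha^b:=C_0(\Omega_\alpha^\circ)\rtimes_{\sigma,\theta_b}\Z^2$, carrying the \emph{same} constant-field cocycle but the larger coefficient algebra, into which $\hat\pi$ does induce an injective $*$-homomorphism $\imath'$, and then identifies $K_*(\I_\alpha^b)\simeq K_*(\I_{\alpha,b'})\simeq K_*(\I_\alpha)$ by choosing a perturbation $b'\in C_0(\Omega_\alpha^\circ)$ making $B_\alpha+b'$ constant on $\supp(l_0)$ and invoking the corner argument of Theorem \ref{teo: interface} together with Corollary \ref{coro: perturbation}. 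Note that the statement only asks for a map at the level of $K$-theory, which is exactly why this composite through $\I_\alpha^b$ suffices.

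The second gap is in the computation itself. Your argument establishes correctly that $\imath_*[M_f]=0$ (via $K_1(C_0(\Omega_\alpha^\circ))=0$, which is fine once $\imath$ exists) and that a primitive element spanning the kernel of a surjection $\Z^2\twoheadrightarrow\Z$ generates it; but the surjectivity of $\imath_*$ on $K_1$ and the isomorphism on $K_0$ are obtained by matching generators across the Connes--Thom picture of $\tilde{\I}_\alpha^b$ and the bulk-interface sequence \eqref{seq six}, and this matching presupposes a commutative diagram of extensions (one quotient being a single torus $\A_b$, the other being $\A_{b_-}\oplus\A_{b_+}$) that you flag as ``the main obstacle'' but do not construct. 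In the paper that diagram is only built later, in the proof of Theorem \ref{Teo el 1}, and it again relies on the perturbation detour; the Proposition itself is instead settled by importing the $K$-theoretic comparison of the smooth and discrete Toeplitz-type extensions from \cite[Theorem 2.6]{RX} in the untwisted case $b=0$, and then removing the dependence on $b$ via the Pimsner--Voiculescu sequence. As written, your proposal assumes the two hardest points rather than proving them.
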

\begin{proof}
The inclusion $C_0(\R)\hookrightarrow C_0(\Omega_\alpha^\circ)$ discussed in \eqref{eq: embedding} yields a injective $*$-homomorphism $\imath'\colon \tilde{\mathfrak{I}}_\alpha^b\hookrightarrow \I_\alpha^b:=C(\Omega^\circ_\alpha)\rtimes_{\sigma,\theta_b}\Z^2$. Let $b'\in C_0(\Omega^\circ_\alpha)$ be a suitable magnetic perturbation such that $B_\alpha +b'$ is the constant $b$ on the support of the function $l_0$ whence
$$(\mathfrak{l}_0\psi)(n)\;=\;l_0(n)\psi(n),\qquad \psi\in \ell^2(\Z^2).$$
This in turn that $\mathfrak{l}_0\I_{\alpha,b'}\mathfrak{l}_0\simeq \mathfrak{l}_0\I_{\alpha}^b\mathfrak{l}_0$ and the same argument used in Theorem \ref{teo: interface} shows that $\I_{\alpha,b'}\simeq \I_\alpha^b.$ Furthermore, Corollary \ref{coro: perturbation} implies that
$$K_*(\I_\alpha)\;\simeq\;K_*(\I_{\alpha,b'})\;\simeq\;K_*(\I_\alpha^b)$$
For  $b=0$ and $\theta_b=1$, by \cite[Theorem 2.6]{RX} the  induced homomorphism
  $$\imath'_*\colon K_0(\tilde{\mathfrak{I}}^0_\alpha)\to K_0(\mathfrak{I}_\alpha^0)$$
is injective  and 
  $$\imath'_*\colon K_1(\tilde{\mathfrak{I}}_\alpha^0)\to K_1(\mathfrak{I}_\alpha^0)$$
  is  surjective with kernel  $\Z[M_f]$. To conclude is enough to consider $\imath_*$ as the composition of the following arrows
  
  \begin{equation}
    \xymatrix{
 K_*(\tilde{\I}_\alpha^0)\ar[r]^{\imath'_*} \ar[rd]^{\imath_*}&K_*(\mathfrak{I}_{\alpha}^0)\ar[d]^{\simeq}\\ & K_*(\mathfrak{I}_{\alpha})\\}
\end{equation}
For the case $b\neq 0$, the Pimsner-Voiculescu exact sequence verifies that the algebras $\tilde{ \I}_\alpha^b$ and $\I_\alpha^b$ have $K$-groups independent of $b$, so this similar to $b=0.$
\end{proof}

Let us introduce now the $C^*$-algebra $$\tilde{\A}_{\alpha,+}:=C^*\big\{\, \rho(a) f(D)\;|\;a\in \mathcal{A}_b,\,f\in C_{0,+}(\R)\,\big\}$$
where $C_{0,+}(\R)$ stands for the algebra of continuous functions which vanish in $-\infty$ and admit a limit in $+\infty$. Then it is well known that there is an exact sequence \cite[Proposition 4.13]{Tom}
\begin{equation}\label{seq smooth}
    \xymatrix{
 0\ar[r]&\tilde{\mathfrak{I}}_{\alpha}^b\ar[r]^{i} & \tilde{\A}_{\alpha,+}\ar[r]^{{\rm ev}_+}& \A_b\ar[r]&0\\}\;.
\end{equation}
It should be noticed that this sequence is the core behind the \emph{smooth bulk-boundary correspondence} proved in \cite{Tom}.

\subsection{Duality between cocycles} The twisted crossed-product structure of $\tilde{\I}_\alpha^b$ discussed in \eqref{eq: smooth rational} and \eqref{eq: smooth irrational} shows that any element $\mathfrak{a}\in \tilde{\I}_\alpha^b$ as a Fourier series of the form
\begin{equation*}
    \mathfrak{a}\;=\;\sum_{n\in \Z^2}\mathfrak{a}_n\mathfrak{u}^n\;,\qquad \mathfrak{a}_n\in C_0\big({\rm Spec}(D)\big)
\end{equation*}
where $\mathfrak{u}^n=\rho(u_1^{n_1}u_2^{n_2}).$ Let $\tilde{\mathcal{T}}_\alpha$ be the dual trace of $\mathcal{T}$ on $\tilde{\I}_\alpha^b$ according to \cite[Definition 1.16]{Take2}. We shall denote its domain as  $ L^1(\tilde{\I}^b_\alpha)$. In light of \cite[Proposition 1.5.4]{Tom} it holds that for  any $\mathfrak{a}\in L^1(\tilde{\I}^b_\alpha)$
\begin{equation}
\tilde{\mathcal{T}}_\alpha(\mathfrak{a})\;=\;c_\alpha\sum_{ \Gamma_\alpha}\mathfrak{a}_{(0,0)}(x)\;, \qquad \tilde{\mathcal{T}}_\alpha(\mathfrak{a})\;=\;\int_{\R}\dd\mu(x)\mathfrak{a}_{(0,0)}(x)
\end{equation}
for $\alpha$ rational and irrational, respectively. Here $c_\alpha$ is the smallest positive constant such that $\Gamma_\alpha=c_\alpha\Z$\footnote{For $\alpha=p/q$ it follows that $c_\alpha$ is nothing but $(p^2+q^2)^{-1/2}.$} and $\mu$ is the Lebesgue measure on $\R$. Then, from the construction of the interface trace $\mathcal{T}_\alpha$ described in Section \ref{section interface and bulk} one can check that 
\begin{align}\label{eq: traces} 
\tilde{\mathcal{T}}_\alpha\big(\imath'(\mathfrak{a})\big)\;=\;\mathcal{T}_\alpha(\mathfrak{a})\;,\qquad \mathfrak{a}\in L^1\big(\tilde{\I}_\alpha^b\big)
\end{align}
where $\imath'\colon \tilde{\I}_\alpha^b\to \I_\alpha^b$ is either the isomorphism $*$-homomorphism in \eqref{isomorphism rational} or the homomorphism presented in the proof of the Proposition \eqref{prop: appendix}. 

\medskip

In view that $\tilde{\mathcal{T}}_\alpha$ is $\tau^\|$-invariant lower semi-continuous trace, then we can define a cyclic $1$-cocycle via
\begin{equation*}
\tilde{\zeta}_\alpha(\mathfrak{a},\mathfrak{b})\;:=\;\tilde{\mathcal{T}}_\alpha\big(\mathfrak{a}^*\nabla_\alpha \mathfrak{b}\big)
\end{equation*}
which defines a class $[\tilde{\zeta}_\alpha]$ in the odd cyclic cohomology of a dense subalgebra of $\tilde{\I}_\alpha^b$. For a suitable differentiable unitary $\mathfrak{u}$ in the unitization of $\tilde{\I}_\alpha^b$ we define its \emph{non-commutative winding number} via
\begin{equation}\label{eq: winding smooth}
  \tilde{\W}_{\tilde{\mathcal{T}}_\alpha,\tau^\|}(\mathfrak{u})\;:=\;\ii \tilde{\mathcal{T}}_\alpha\big(\mathfrak{u}^*\nabla_\alpha(\mathfrak{u})\big)
\end{equation}
It turns out that this value only depends on the class $[\mathfrak{u}]\in K_1(\tilde{\I}_\alpha^b)$. Indeed, the following pairing is well-known \cite{PRO,Tom}
\begin{equation}
    \tilde{\W}_{\tilde{\mathcal{T}}_\alpha,\tau^\|}(\mathfrak{u})\;=\;\big\langle [\tilde{\zeta}_\alpha], [\mathfrak{u}]\big\rangle\;=\;\tilde{\zeta}_\alpha(\mathfrak{u}^*-{\bf 1},\mathfrak{u}-{\bf 1})
\end{equation}
  As an immediate consequence of \cite[Theorem 4.53]{Tom}, one gets the duality between Chern cocycles defined in \eqref{eq: chern smooth} and \eqref{eq: winding smooth}.

\begin{theorem}[\cite{Tom}]\label{Teo: el 2}
    Let $\mathtt{exp}\colon K_0(\A_b)\to K_1(\tilde{\I}_\alpha^b)$ be the exponential map related to the sequence \eqref{seq smooth}. Then for any projection class $[\mathfrak{p}]\in  K_0(\A_b)$
    \begin{equation}
        {\rm Ch}_{\mathcal{T},\tau}([\mathfrak{p}])\;=\;\tilde{\W}_{\tilde{\mathcal{T}}_\alpha,\tau^\|}(\mathtt{exp}([\mathfrak{p}]))
    \end{equation}
    
\end{theorem}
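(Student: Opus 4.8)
The plan is to reduce the statement to the abstract pairing-duality \cite[Theorem 4.53]{Tom}, after verifying that the objects built in this appendix match the framework of that reference. The exact sequence \eqref{seq smooth} is precisely the smooth Toeplitz extension attached to the crossed product $\tilde{\I}_\alpha^b=\mathcal{A}_b\rtimes_{\tau^\bot}\R$: the quotient $\mathcal{A}_b\simeq\A_b$ plays the role of the bulk, while $\tilde{\A}_{\alpha,+}$ is the half-space algebra obtained by compressing with $\chi_{[0,\infty)}(D)$. First I would record that, under these identifications, the exponential map $\mathtt{exp}\colon K_0(\mathcal{A}_b)\to K_1(\tilde{\I}_\alpha^b)$ coincides with the boundary map of the Toeplitz extension treated in \cite{Tom}, and that it realizes the Connes--Thom isomorphism $K_0(\mathcal{A}_b)\simeq K_1(\tilde{\I}_\alpha^b)$ \cite{Con2} already invoked in Proposition \ref{prop: appendix}.

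Next I would match the two cyclic cocycles, the key point being the dimensional-reduction structure encoded in the derivations. On $\tilde{\I}_\alpha^b=\mathcal{A}_b\rtimes_{\tau^\bot}\R$ the perpendicular derivation $\nabla_\alpha^\bot$ is the generator of the crossed-product direction, whereas the parallel derivation $\nabla_\alpha$ is inherited from the action $\tau^\|$ on $\mathcal{A}_b$ and commutes with the $\R$-action. Thus the $2$-cocycle ${\rm Ch}_{\mathcal{T},\tau}(\cdot)=\mathcal{T}\big(p[\nabla_\alpha p,\nabla_\alpha^\bot p]\big)$ in \eqref{eq: chern smooth} feels both directions, while the winding $1$-cocycle $\tilde{\zeta}_\alpha(\mathfrak{a},\mathfrak{b})=\tilde{\mathcal{T}}_\alpha(\mathfrak{a}^*\nabla_\alpha\mathfrak{b})$ of \eqref{eq: winding smooth} retains only the parallel one. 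I would check that $\tilde{\mathcal{T}}_\alpha$ is exactly the dual trace making $\tilde{\zeta}_\alpha$ the suspension of ${\rm Ch}_{\mathcal{T},\tau}$ in the sense of \cite{Tom}; this is where the explicit formulas for $\tilde{\mathcal{T}}_\alpha$ recorded above, the sum over $\Gamma_\alpha$ for $\alpha$ rational and the Lebesgue integral over $\R$ for $\alpha$ irrational, enter.

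With this dictionary fixed, the equality is a direct instance of \cite[Theorem 4.53]{Tom}: for a self-adjoint lift $a\in\tilde{\A}_{\alpha,+}$ of the projection $\mathfrak{p}$ one has $\mathtt{exp}([\mathfrak{p}])=[\expo{2\pi\ii a}]$, and the Chern number of $\mathfrak{p}$ computed with $\mathcal{T}$ and the pair $(\nabla_\alpha,\nabla_\alpha^\bot)$ equals the winding number of $\expo{2\pi\ii a}$ computed with $\tilde{\mathcal{T}}_\alpha$ and $\nabla_\alpha$. I expect the main obstacle to be analytic rather than algebraic: one must control the unbounded derivations $\nabla_\alpha,\nabla_\alpha^\bot$ on the non-unital algebra $\tilde{\I}_\alpha^b$, verify that the lift $a$ lies in the common smooth domain of both cocycles, and justify the integration by parts along the crossed-product direction that collapses the $2$-cocycle to the $1$-cocycle. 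These are exactly the technical ingredients established in \cite[Chapter 4]{Tom}, so once the identification of the algebras, trace, derivations and cocycles is in place, the statement follows immediately.
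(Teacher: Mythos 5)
Your proposal is correct and follows essentially the same route as the paper, which offers no independent proof but simply observes that the identity is an immediate consequence of \cite[Theorem 4.53]{Tom} once the algebras, traces, derivations and cocycles of the appendix are matched with the smooth Toeplitz framework of that reference. The only slight imprecision is your description of $\tilde{\A}_{\alpha,+}$ as a compression by the sharp projection $\chi_{[0,\infty)}(D)$ --- the point of the \emph{smooth} extension is precisely that one uses functions in $C_{0,+}(\R)$ rather than the Heaviside function --- but this does not affect the argument.
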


   
Now we are in a position to state the main result of this appendix.
\begin{theorem}\label{Teo el 1}
    For any $\alpha\in \overline{\R}$ it holds that $\W_\alpha([\mathfrak{w}_\alpha])=1$, where $\mathfrak{w}_\alpha$ is the unitary defined in \eqref{eq: generator}.
\end{theorem}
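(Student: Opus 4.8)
The plan is to derive $\W_\alpha([\mathfrak{w}_\alpha])=1$ from the bulk--interface duality of Theorem \ref{Teo: el 2}, transported to $\I_\alpha$ through the smooth interface algebra $\tilde{\I}_\alpha^{b}$. The starting observation is that, by \eqref{eq: generator}, $[\mathfrak{w}_\alpha]$ is the image under the exponential map of the extension \eqref{seq a} of the Powers--Rieffel class $[(\mathfrak{0},\mathfrak{p}_{b_+})]\in K_0(\A_{\rm bulk})$. On the smooth side, Theorem \ref{Teo: el 2} applied to the extension \eqref{seq smooth} with single bulk $\A_{b_+}$ gives ${\rm Ch}_{\mathcal{T},\tau}([\mathfrak{p}_{b_+}])=\tilde{\W}_{\tilde{\mathcal{T}}_\alpha,\tau^{\|}}\big(\mathtt{exp}([\mathfrak{p}_{b_+}])\big)$, where $\mathfrak{p}_{b_+}$ is the Powers--Rieffel projection of the noncommutative torus $\mathcal{A}_{b_+}$. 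Since $[\mathfrak{p}_{b_+}]$ is (modulo $[\mathfrak{1}]$) the generator of $K_0(\mathcal{A}_{b_+})$, its pairing with the fundamental cyclic $2$-cocycle is $\pm1$; with the orientation fixed by the ordered frame $(v_\alpha,v_\alpha^\bot)$ this is exactly ${\rm Ch}_{\mathcal{T},\tau}([\mathfrak{p}_{b_+}])=1$. Thus the whole problem reduces to matching $\W_\alpha([\mathfrak{w}_\alpha])$ with $\tilde{\W}_{\tilde{\mathcal{T}}_\alpha,\tau^{\|}}\big(\mathtt{exp}([\mathfrak{p}_{b_+}])\big)$.

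This matching rests on two compatibilities of the morphism $\imath'$ (the isomorphism \eqref{isomorphism rational} when $\alpha$ is rational, and the homomorphism constructed in Proposition \ref{prop: appendix} when $\alpha$ is irrational). First, the trace identity \eqref{eq: traces}, namely $\tilde{\mathcal{T}}_\alpha\circ\imath'=\mathcal{T}_\alpha$, together with the fact that $\imath'$ intertwines the parallel derivation $\nabla_\alpha$ on both algebras (both being the infinitesimal generators of the $\tau^{\|}$-action in the interface direction $v_\alpha$), forces the winding-number functionals to agree at the level of $K_1$: $\W_\alpha\circ\imath_*=\tilde{\W}_{\tilde{\mathcal{T}}_\alpha,\tau^{\|}}$. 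Second, I would establish the naturality identity $\imath_*\big(\mathtt{exp}([\mathfrak{p}_{b_+}])\big)=[\mathfrak{w}_\alpha]$, coming from a morphism of the extensions \eqref{seq smooth} (single bulk $b_+$) and \eqref{seq a} (restricted to its $b_+$ component) compatible with the evaluation maps, so that the two connecting maps intertwine. Chaining the two gives $\W_\alpha([\mathfrak{w}_\alpha])=\tilde{\W}_{\tilde{\mathcal{T}}_\alpha,\tau^{\|}}\big(\mathtt{exp}([\mathfrak{p}_{b_+}])\big)={\rm Ch}_{\mathcal{T},\tau}([\mathfrak{p}_{b_+}])=1$.

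For rational $\alpha$ (including $\alpha=\pm\infty$, which reduces to the standard Iwatsuka case of \cite{Deni1}) the argument simplifies, since $\imath'$ is an honest isomorphism by \eqref{isomorphism rational}; here one may also proceed more concretely, using that $K_1(\I_\alpha)=\Z[\mathfrak{w}'_\alpha]$ with $\W_\alpha(\mathfrak{w}'_\alpha)=1$ by Remark \ref{remark 1}, so that the duality only needs to pin down that $[\mathfrak{w}_\alpha]$ equals the generator and not a multiple of it. For irrational $\alpha$ the smooth route is the only one available, because $\imath'$ is then not an isomorphism; one relies instead on the surjectivity of $\imath_*\colon K_1(\tilde{\I}_\alpha^{b_+})\to K_1(\I_\alpha)$ from Proposition \ref{prop: appendix}, so that the value of $\W_\alpha$ on the generator $[\mathfrak{w}_\alpha]$ is determined by the value of $\tilde{\W}_{\tilde{\mathcal{T}}_\alpha,\tau^{\|}}$ on any preimage, in particular on $\mathtt{exp}([\mathfrak{p}_{b_+}])$.

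The hard part will be the naturality identity $\imath_*\big(\mathtt{exp}([\mathfrak{p}_{b_+}])\big)=[\mathfrak{w}_\alpha]$ in the irrational case. The difficulty is that $\imath'$ lands in $\I_\alpha^{b}=C(\Omega_\alpha^\circ)\rtimes_{\sigma,\theta_b}\Z^2$ rather than in $\I_\alpha$ itself, so I would have to carry along the chain of isomorphisms $K_*(\I_\alpha)\simeq K_*(\I_{\alpha,b'})\simeq K_*(\I_\alpha^{b})$ from the proof of Proposition \ref{prop: appendix} and verify that it respects both the interface trace and the derivation $\nabla_\alpha$; this is ensured by Corollary \ref{coro: perturbation}, since the auxiliary perturbation $b'$ is supported near the interface and changes neither $\mathcal{T}_\alpha$ nor the class $[\mathfrak{w}_\alpha]$. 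A secondary but genuine bookkeeping step is fixing the orientation so that the Rieffel Chern number is $+1$ rather than $-1$, which is exactly what makes the sign in \eqref{eq: generator} produce $\W_\alpha([\mathfrak{w}_\alpha])=+1$ and hence the correct sign in the bulk-interface correspondence of Theorem \ref{Teo current}.
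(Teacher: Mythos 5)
Your proposal follows essentially the same route as the paper's proof: it invokes the smooth duality of Theorem \ref{Teo: el 2}, transfers it to $\I_\alpha$ via the trace compatibility \eqref{eq: traces} and the morphism $\imath'$, establishes $\imath_*\big(\mathtt{exp}([\mathfrak{p}_{b_+}])\big)=[\mathfrak{w}_\alpha]$ through a commutative diagram of extensions (using the perturbation $b'$ and Corollary \ref{coro: perturbation}), and closes with the fact that the Powers--Rieffel projection has Chern number $1$ after rotating the frame $(v_\alpha,v_\alpha^\bot)$ to the standard one. The steps you flag as the hard parts are exactly the ones the paper carries out, so the plan is sound and matches the published argument.
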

\begin{proof}
Let us start with $\alpha$ irrational.  We first notice that equation \eqref{eq: traces} and Theorem \ref{Teo: el 2} lead to
\begin{align*}
       \W_\alpha ( \imath_*\circ \mathtt{exp}([\mathfrak{p}]))\;=\;\tilde{\W}_{\mathcal{T}_\alpha,\tau^\|} (\mathtt{exp}([\mathfrak{p}]))\;=\;{\rm Ch}_{\mathcal{T},\tau}([\mathfrak{p}])
\end{align*}
where $\imath_*\colon$ is provided by Proposition \ref{prop: appendix}. Let $b'\in C_0(\Omega^\circ_\alpha)$ such that $B_\alpha +b'$ is the constant $b$ in the support of the function $l_0$. It follows that $*$-homomorphism $\imath'\colon \tilde{\I}_\alpha^b\to \I_\alpha^b\simeq \I_{\alpha,b'}$ turns out to be a non-degenerate $*$-representation of $\tilde{\I}_\alpha^b$ on $\ell^2(\Z^2)$. So by \cite[Lemma I.9.14]{Dav}, there is a unique $*$-homomorphism $\hat{\imath}'\colon \tilde{\A}_{\alpha,+}\to \A_{\alpha,b'}\subset \mathcal{B}(\ell^2(\Z^2))$ which extends $\imath'.$ Therefore,  taking $b=b_+$ in the sequence \eqref{seq smooth} one arrives in the following commutative diagram
\begin{equation}
        \xymatrix{
 0\ar[r]&\tilde{\mathfrak{I}}_{\alpha}^{b_+}\ar[r]^{i}\ar[d]_{\imath'} & \tilde{\A}_{\alpha,+}\ar[r]^{{\rm ev}_+}\ar[d]_{\hat{\imath}'}& \A_{b_+}\ar[r]\ar[d]_{i_+}&0 \\
  0\ar[r]&\mathfrak{I}_{\alpha,b'}\ar[r]^{i} & \A_{{\alpha,b'}}\ar[r]^{{\rm ev}}& \A_{\rm bulk}\ar[r]&0 
 }
\end{equation}
 where $i_+(\mathfrak{a})=(\mathfrak{0},\mathfrak{a})$. The naturalness of the exponential map with Corollary \ref{coro: perturbation} yield
\begin{equation*}
       \xymatrix{
 & K_0( \A_{b_+})\ar[d]_{(i_+)_*}\ar[r]^{\mathtt{exp}} & K_1\big(\tilde{\I}_\alpha^{b_+}\big)\ar[d]^{\imath_*} \\
& K_0(\A_{\rm bulk})\ar[r]^{\mathtt{exp}}\ar[d]^\simeq &  K_1(\I_{\alpha,b'})\ar[d]^\simeq
\\
& K_0(\A_{\rm bulk})\ar[r]^{\mathtt{exp}} &  K_1(\I_\alpha)}
\end{equation*}
Thus the above verifies that
\begin{equation*}
\begin{split}
    \mathtt{}\W_\alpha \big( [\mathfrak{w}_\alpha]\big)\;&=\;\mathtt{}\W_\alpha \big(\mathtt{exp}\circ(i_+)_*([\mathfrak{p}_{b_+}])\big)\;=\;\mathtt{}\W_\alpha \big(\imath_*\circ\mathtt{exp}\big([\mathfrak{p}_{b_+}])\big)\\
    &=\;{\rm Ch}_{\mathcal{T},\tau}([\mathfrak{p}_{b_+}])
\end{split}
\end{equation*}
where $\mathfrak{p}_{b+}$ is the Power-Rieffel projection of  $\A_{b_+}$. Moreover, since there exist $A\in {\rm SO}(2)$\footnote{The group of $2\times 2$  orthogonal matrices on $\R$ with determinant $1.$} such that $Av_\alpha=(1,0)$ and $Av_\alpha^\bot=(0,1)$, then \cite[Proposition 3.4.3]{Tom} implies that 
$$\W_\alpha([\mathfrak{w}_\alpha])\;=\;{\rm Ch}_{\mathcal{T},\tau}([\mathfrak{p}_{b_+}])\;=\;\mathcal{T}\big(\mathfrak{p}_{b_+}[\nabla_1\mathfrak{p}_{b_+},\nabla_2\mathfrak{p}_{b_+}]\big)\;=\;1$$
In the last equality, we used the fact that the Power-Rieffel projection has a Chern number equal to $1$ \cite{Con}. The rational case follows with the same argument by using in this case the following commutative diagram
\begin{equation}
        \xymatrix{
 0\ar[r]&\tilde{\mathfrak{I}}_{\alpha}^{b_+}\ar[r]^{i}\ar[d]_{\imath'} & \tilde{\A}_{\alpha,+}\ar[r]^{{\rm ev}_+}\ar[d]_{\hat{\imath}'}& \A_{b_+}\ar[r]\ar[d]_{i_+}&0 \\
  0\ar[r]&\I_\alpha\ar[r]^{i} & \A_{{\alpha,b'}}\ar[r]^{{\rm ev}}& \A_{\rm bulk}\ar[r]&0 
 }
\end{equation}

\end{proof}

\end{document}